\numberwithin{equation}{section}
\newtheorem{theorem}{Theorem}[section]
\newtheorem{corollary}[theorem]{Corollary}
\newtheorem{lemma}[theorem]{Lemma}
\newtheorem{prop}[theorem]{Proposition}
\theoremstyle{definition}
\newtheorem{remark}[theorem]{Remark}
\theoremstyle{definition}
\theoremstyle{definition}
\def\dashint{\operatorname%
{\,\,\text{\bf-}\kern-.98em\DOTSI\intop\ilimits@}}
\def\\det{\text{\det}}
\def\.5{\frac{1}{2}}
\newcommand{\RN}[1]{%
\textup{\uppercase\expandafter{\romannumeral#1}}%
}
\newcommand{\dist}{\text{dist}}
\renewcommand{\epsilon}{\varepsilon}
\newcounter{marnote}
\begin{document}

\title[Optimal higher derivative estimates]{Optimal higher derivative estimates for solutions of the Lam\'e system with closely spaced hard inclusions}

\author[H.J. Dong]{Hongjie Dong}
\address[H.J. Dong]{Division of Applied Mathematics, Brown University, 182 George Street, Providence, RI 02912, USA}
\email{Hongjie\_Dong@brown.edu}
\thanks{H. Dong was partially supported by the NSF under agreement DMS-2055244 and DMS-2350129.}

\author[H.G. Li]{Haigang Li}
\address[H.G. Li]{School of Mathematical Sciences, Beijing Normal University, Laboratory of Mathematics and Complex Systems, Ministry of Education, Beijing 100875, China.}
\email{hgli@bnu.edu.cn}
\thanks{H. Li was partially Supported by Beijing Natural Science Foundation (No. 1242006), the Fundamental Research Funds for the Central Universities (No. 2233200015), and National Natural Science Foundation of China (No. 12471191).}

\author[H.J. Teng]{Huaijun Teng}
\address[H.J. Teng]{School of Mathematical Sciences, Beijing Normal University, Laboratory of Mathematics and Complex Systems, Ministry of Education, Beijing 100875, China.}
\email{hjt2021@mail.bnu.edu.cn}

\author[P.H. Zhang]{PeiHao Zhang}
\address[P.H. Zhang]{School of Mathematical Sciences, Beijing Normal University, Laboratory of Mathematics and Complex Systems, Ministry of Education, Beijing 100875, China.}
\email{phzhang@mail.bnu.edu.cn}

%\footnote{}

\date{\today} % delete this line to display the current date

\subjclass[2020]{35J57, 35Q74, 74E30, 35B44}

\keywords{Lam\'e system, optimal higher derivative estimates, high contrast coefficients, conductivity of composite media}

%%% BEGIN DOCUMENT
\begin{abstract}
We investigate higher derivative estimates for the Lam\'e system with hard inclusions embedded in a bounded domain in \(\mathbb{R}^{d}\). As the distance \(\varepsilon\) between two closely spaced hard inclusions approaches zero, the stress in the narrow regions between the inclusions increases significantly. This stress is captured by the gradient of the solution. The key contribution of this paper is a detailed characterization of this singularity, achieved by deriving higher derivative estimates for solutions to the Lam\'e system with partially infinite coefficients. These upper bounds are shown to be sharp in two and three dimensions when the domain exhibits certain symmetries. To the best of our knowledge, this is the first work to precisely quantify the singular behavior of higher derivatives in the Lam\'e system with hard inclusions.
\end{abstract}

\maketitle
%\tableofcontents

\section{Introduction}

We consider the Lam\'e system in linear elasticity. Let $D$ be a smooth bounded open set in $\mathbb R^{d}$, where $d\geq2$, and $D_{1}$ and $D_{2}$ be two disjoint convex smooth open subsets of $D$ that are separated by a distance $\varepsilon$ and located away from the boundary $\partial D$. Specifically,we assume
\begin{equation*}%\label{assump D1 D2}
\begin{split}
\overline{D}_{1},\overline{D}_{2}\subset D,\quad
\varepsilon:=\mbox{dist}(D_{1},D_{2})\ge 0,\quad\mbox{dist}(D_{1}\cup D_{2},\partial D)>\kappa_{0}>0,
\end{split}
\end{equation*}
where $\kappa_{0}$ is a constant independent of $\varepsilon$. For a fixed integer $m\ge0$, we assume that the $C^{m+1,\gamma}$ norms of $\partial{D}_{1}$ and $\partial{D}_{2}$ are bounded by some positive constant, independent of $\varepsilon$, ensuring that the inclusions are not too small. Set $\Omega:=D\setminus\overline{D_{1}\cup D_{2}}$ as illustrated in Figure \ref{figure:domain}. We assume that $\Omega$ and $D_{1}\cup D_{2}$ are occupied by two different homogeneous and isotropic materials with distinct Lam\'{e} constants $(\lambda, \mu)$ and $(\lambda_1, \mu_1)$. Then the elasticity tensors for the background and the inclusions can be expressed, respectively, as $\mathbb{C}^0$ and $\mathbb{C}^1$, where
$$
C_{ijkl}^0=\lambda\delta_{ij}\delta_{kl} +\mu(\delta_{ik}\delta_{jl}+\delta_{il}\delta_{jk}),~\mbox{and}~
C_{ijkl}^1=\lambda_1\delta_{ij}\delta_{kl} +\mu_1(\delta_{ik}\delta_{jl}+\delta_{il}\delta_{jk})
$$
for $i, j, k, l=1,2,\cdots,d,$ and $\delta_{ij}$ is the Kronecker symbol defined by $\delta_{ij}=0$ for $i\neq j$, $\delta_{ij}=1$ for $i=j$. 

Given a vector-valued function $\boldsymbol{\varphi}=(\boldsymbol{\varphi}^{(1)},\boldsymbol{\varphi}^{(2)},\cdots,\boldsymbol{\varphi}^{(d)})^{\mathrm{T}}$, we consider the following Dirichlet problem
\begin{align}\label{Lame}
\begin{cases}
\nabla\cdot \left((\chi_{\Omega}\mathbb{C}^0+\chi_{D_{1}\cup{D}_{2}}\mathbb{C}^1){\bf e}({\bf u})\right)=0&\hbox{in}~~D,\\
{\bf u}=\boldsymbol{\varphi} &\hbox{on}~~\partial{D},
\end{cases}
\end{align}
where ${\bf u}=({\bf u}^{(1)}, {\bf u}^{(2)},\cdots,{\bf u}^{(d)})^{\mathrm{T}}:D\rightarrow\mathbb{R}^{d}$ denotes the displacement field, ${\bf e}({\bf u}):=\frac{1}{2}(\nabla {\bf u}+(\nabla {\bf u})^{\mathrm{T}})$ is the strain tensor. Here $\chi_{D}$ is the characteristic functions of $D$. We assume the standard ellipticity conditions: $\mu>0, d\lambda+2\mu>0$, and $\mu_1>0, d\lambda_1+2\mu_1>0$. The existence and uniqueness of the solution to \eqref{Lame} have been well established. 

Significant progress has been made in the regularity theory for related partial differential equations and systems with coefficients that satisfy certain piecewise continuous conditions. We would like to draw the reader's attention to the open problems regarding higher derivative estimates on page 894 of \cite{ln}. Due to the small distance $\epsilon$, the gradient (or stress) becomes concentrated in the narrow neck region between the inclusions. From a practical perspective, it is crucial to understand whether the gradient of the solution can become arbitrarily large as the inclusions approach one another. Moreover, to obtain accurate numerical results and to effectively understand and analyze this singular behavior, higher-order derivative estimates are essential—both from an engineering standpoint and for the needs of numerical simulations. Consequently, establishing estimates for the higher-order derivatives of the solution is both imperative and essential.

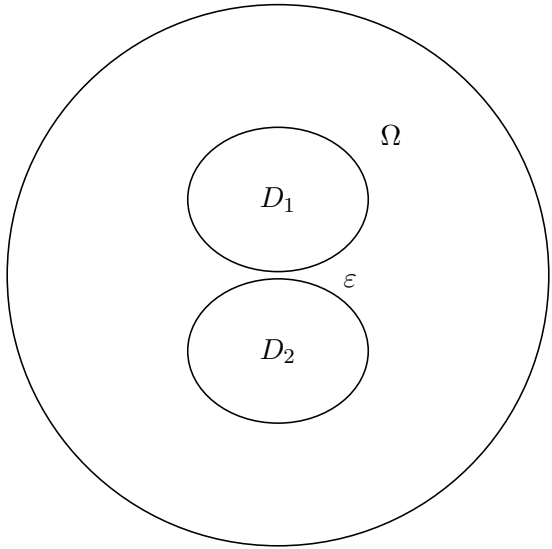
\begin{figure}[t]\centering
\begin{tikzpicture}[scale=1.2, semithick]
  \draw (0, 0) circle (3.0) +(1.25, 1.55) node {$\Omega$}; 
  \draw (0,  0.84) circle [x radius=1, y radius=0.8] node {$D_1$} +(0.80, -.90) node {$\epsilon$}  +(1.25, 0.0); 
   \draw (0, -0.84) circle [x radius=1, y radius=0.8] node {$D_2$} +(1.25, 0); 
\end{tikzpicture}
\caption{Our domain with a small distance $\epsilon=\mathrm{dist}(D_1, D_2)>0$.}\label{figure:domain}
\vspace{5pt}
\end{figure}

In recent years, many authors have obtained various results on elliptic equations and systems arising from elasticity. See, for instance, \cite{CKV,lv,ln,dz}. Chipot, Kinderlehrer, and Vergara-Caffarelli \cite{CKV} studied divergence-type uniformly elliptic systems in a domain $D\subset\mathbb{R}^{d}$ consisting of a finite number of linearly elastic, homogeneous, and parallel laminae, which models the equilibrium problem of linear laminates. Babu\u{s}ka et al. \cite{ba} computationally analyzed damage and fracture in fiber-reinforced composite materials with densely packed elastic inclusions, observing numerically that the strain tensor ${\bf e}({\bf u})$ remains bounded as the distance $\varepsilon$ approaches zero. Li and Nirenberg \cite{ln} investigated general divergence form second-order elliptic systems, including linear elasticity systems \eqref{Lame}, and affirmatively answered the question posed by the numerical results in \cite{ba}, proving that the strain tensor remains bounded as $\varepsilon$ tends to $0$. 

There have been many studies on a simplified modeling problem known as the conductivity problem, which involves a scalar elliptic equation with piecewise constant coefficients
\begin{equation}\label{scalareq}
\nabla(a_{k}(x)\nabla u_{k})=0\quad\mbox{in}~D,
\end{equation} 
where $a_{k}(x)\equiv1$ in $\Omega$ and $a_{k}(x)=k\neq1$ in $D_{1}\cup{D}_{2}$, with $0<k<\infty$.  In the anti-plane shear model, the out-of-plane elastic displacement satisﬁes the two dimensional conductivity equation \eqref{scalareq}. Therefore, the gradient estimates for $u_{k}$ are valuable for the failure analysis of composite material. Bonnetier and Vogelius \cite{bv} were the first to study a special case in $\mathbb{R}^2$, where  $D$ is a disk $B_{R_0} := \{x : |x| < R_0\}$, and $D_1$ and $D_2$ are two unit disks centered at $(0,-1)$ and $(0,1)$, such that their closure touch at the origin (i.e., $\varepsilon=0$). According to classic elliptic theory, away from the origin, $u$ is smooth in each subdomain up to the interfacial boundaries. However, the regularity near the origin is more subtle due to the geometry of the subdomains. They proved that the gradient of the solution $u_{k}$ of \eqref{scalareq} was bounded using M{\"o}bius transformations. 

Subsequently, higher regularity for solutions to equations with adjacent inclusions became an important topic of study. Significant progress has been made on equation \eqref{scalareq} with piecewise constant coefficients. Using conformal mappings, Li and Vogelius \cite{lv} proved that the solutions to \eqref{scalareq} are piecewise smooth up to interfacial boundaries, when the subdomains $D_{1}$ and $D_{2}$ are two touching unit disks in $\mathbb{R}^{2}$, and $D$ is a disk $B_{R_0}$ with sufficiently large $R_0$. Dong and Zhang \cite{dz} removed the requirement that $R_0$ be sufficiently large by constructing Green’s function and using classical Schauder estimates. For the case when $\varepsilon>0$, Dong and Li \cite{dl} utilized the Green function method to derive higher derivative estimates and demonstrated the explicit dependence of the coefficients and the distance between interfacial boundaries of inclusions. Recently, Ji and Kang \cite{JK}, as well as Dong and Yang \cite{DY}, explored higher derivative estimates for circular inclusions under different sign assumptions on relative conductivities. It is important to note that, in all these works, the dimension is always assumed to be two and the inclusions are circular. 

For the scalar equation case, the estimates in \cite{ln,lv,dz,dl,DY,JK} depend on the contrast of the coefficients $k$. If the contrast $k$ deteriorates, the situation changes signficantly, and the effect of the small distance $\varepsilon$ on the singular behavior of the gradient has been studied extensively. In particular, it was shown in various papers, such as Budiansky and Carrier \cite{BC}, Keller \cite{keller1,keller2}, and Markenscoff \cite{M}, that when $k=\infty$ in equation \eqref{scalareq}, the $L^{\infty}$-norm of $|\nabla u_{\infty}|$ tends to become unbounded as $\varepsilon$ approaches $0$. For the perfect conductivity problem ($k=\infty$), it has been established that the generic blow-up rate of $|\nabla u_{\infty}|$ is $\varepsilon^{-1/2}$ in two dimensions, $|\varepsilon\log\varepsilon|^{-1}$ in three dimensions, and $\varepsilon^{-1}$ in dimensions higher than three; see \cite{kleey,kly,akl,aklll,lyu1,bly1,bt} for more details. These blow-up rates were proven to be optimal and are independent of the shape of the inclusions, as long as the two adjacent inclusions are relatively strictly convex. To explicitly characterize the singular behavior of the gradient, the asymptotic formulas for the gradient were derived in \cite{kly2,lwx,lly,l}.  For the insulated conductivity problem ($k=0$), it was shown in \cite{akl} that the optimal blow-up rate is $\varepsilon^{-1/2}$ in two dimensions, and $\varepsilon^{-1/2+\beta(d)}$ for higher dimensions $d\geq3$ by Li and Yang \cite{LY}, Weinkove \cite{Wen}, Dong, Li and Yang \cite{DLY,DLY2}, Li and Zhao \cite{LiZhao}. For more work on the conductivity problem and related topics, we refer the reader to \cite{abtv,y1,limyun,lhg,lyu}.

In the case of the Lam{\'e} system in linear elasticity, when the Lam{\'e} parameters of inclusions degenerate to $\infty$, the optimal blow-up rates of the gradient are shown to be $\varepsilon^{-1/2}$ in two dimensions \cite{bll1,ky} and $(\varepsilon|\log\varepsilon|)^{-1}$ in three dimensions \cite{bll2}, as demostrated by providing pointwise upper bounds. See also the corresponding lower bound estimates \cite{l}. Addtional related work can be found in \cite{dx,hl,llby}. When the Lam{\'e} parameters of the inclusions degenerate to $0$, the gradient estimates for the Lam\'e systems were studied by Lim and Yu \cite{lyu} for two dimensions and Li and Yang \cite{LY} for all dimensions $n\geq2$, where an upper bound $\varepsilon^{-1/2}$ was derived. 

The main objective of this paper is to further establish higher derivative estimates that fully characterize the singular behavior when the Lam{\'e} parameters of inclusions degenerate to $\infty$ or $0$. 
For the sake of simplicity, we first translate and rotate the coordinate system so that the two points $P_1\in\partial D_1$ and $P_2\in\partial D_2$, which satisfy $\dist(P_1,P_2)=\dist(\partial D_1,\partial D_2)=\varepsilon$, are represented as $P_1=(0',\frac{\varepsilon}{2}) \in \partial D_1$ and $P_2=(0',-\frac{\varepsilon}{2}) \in \partial D_2$. There exists a constant $R$, independent of $\varepsilon$, such that the portions of $\partial D_1$ and $\partial D_2$ near the origin can be represented, respectively, by graphs 
\begin{equation*}%\label{h1h2'}
x_d=\frac{\varepsilon}{2}+h_1(x')\quad\text{and}\quad x_d=-\frac{\varepsilon}{2}-h_2(x')\quad \text{for}~ |x'|\leq 2R,
\end{equation*}
where $h_1$, $h_2\in C^{m+1,\gamma}(B'_{2R}(0))$ and satisfy 
\begin{align}
&-\frac{\varepsilon}{2}-h_{2}(x') <\frac{\varepsilon}{2}+h_{1}(x')\quad\mbox{for}~~ |x'|\leq 2R,\label{h1-h2}\\
&h_{1}(0)=h_2(0)=0,\quad \nabla_{x'} h_{1}(0)=\nabla_{x'} h_2(0)=0, \quad \nabla^2_{x'}(h_1+h_2)(x')\ge\kappa I_{n-1},\label{h1h1}\\
&|h_{i}(x')|\leq C|x'|^2,~|\nabla_{x'} h_{i}(x')|\leq C|x'|,~|\nabla_{x'}^k h_{i}(x')|\leq C\quad\mbox{for}~~|x'|<2R,\label{h1h14}
\end{align}
where $I_{n-1}$ denotes the $(n-1)\times(n-1)$ identity matrix, $\kappa>0$ is a constant, and $2\le k\le m+1$, $i=1,2$. 
We introduce the linear space of rigid displacement in $\mathbb{R}^{d}$, $\Psi:=\{\boldsymbol{\psi}\in C^1(\mathbb{R}^{d}; \mathbb{R}^{d})\ |\ \nabla\boldsymbol{\psi}+(\nabla\boldsymbol{\psi})^{\mathrm{T}}=0\}$ with a basis $\left\{\boldsymbol{e}_{i},~x_{j}\boldsymbol{e}_{k}-x_{k}\boldsymbol{e}_{j}:~1\leq\,i\leq\,d,~1\leq\,j<k\leq\,d\right\}$,
where $\boldsymbol{e}_{1},\cdots,\boldsymbol{e}_{d}$ denote the standard basis of $\mathbb{R}^{d}$. For fixed $\lambda$ and $\mu$, denoting the solution of (\ref{Lame}) by ${\bf u}_{\lambda_1,\mu_1}$, it was shown in \cite{bll1} that
${\bf u}_{\lambda_1,\mu_1}\rightarrow {\bf u}$ in $H^1(D; \mathbb{R}^{d})$ as $\min\{\mu_1, d\lambda_1+2\mu_1\}\rightarrow\infty$,
where ${\bf u}$ is the unique solution in $H^1(D; \mathbb{R}^{d})$ to
\begin{align}\label{maineqn}
\begin{cases}
\mathcal{L}_{\lambda, \mu}{\bf u}:=\nabla\cdot(\mathbb{C}^0{\bf e}({\bf u}))=0\quad&\hbox{in}\ \Omega,\\
{\bf u}|_{+}={\bf u}|_{-}&\hbox{on}\ \partial{D}_{i},i=1,2,\\
{\bf e}({\bf u})=0&\hbox{in}~~D_{i},i=1,2,\\
\int_{\partial{D}_{i}}\frac{\partial {\bf u}}{\partial \nu}\Big|_{+}\cdot\boldsymbol{\psi}_{\alpha}=0&i=1,2,\alpha=1,2,\cdots,\frac{d(d+1)}{2},\\
{\bf u}=\boldsymbol{\varphi}&\hbox{on}\ \partial{D},
\end{cases}
\end{align}
where $\frac{\partial {\bf u}}{\partial \nu}\big|_{+}:=(\mathbb{C}^0{\bf e}({\bf u}))\vec{n}=\lambda(\nabla\cdot {\bf u})\vec{n}+\mu(\nabla {\bf u}+(\nabla {\bf u})^{\mathrm{T}})\vec{n}$,
and $\vec{n}$ is the unit outer normal of $D_{i}$, $i=1,2$. Here and throughout this paper, the subscript $\pm$ indicates the limit from outside and inside the inclusions, respectively. The existence, uniqueness, and regularity of weak solutions to (\ref{maineqn}) can be found in the Appendix of \cite{bll1}. For $0\leq r\leq 2R$, we define the neck region between $D_{1}$ and $D_{2}$ by
\begin{equation*}%\label{narrow-region}
\Omega_r:=\left\{(x',x_{d})\in \Omega:-\frac{\varepsilon}{2}-h_2(x')<x_{d}<\frac{\varepsilon}{2}+h_1(x'),~|x'|<r\right\}
\end{equation*} 
with top and bottom boundaries denoted by 
$$
\Gamma^{+}_r:=\left\{(x',x_{d})\in \Omega:x_{d}=\frac{\varepsilon}{2}+h_1(x'),~|x'|<r\right\}
$$
and 
$$
\Gamma^{-}_r:=\left\{(x',x_{d})\in \Omega:x_{d}=-\frac{\varepsilon}{2}- h_2(x'),~|x'|<r\right\}.
$$

Since the maximum principle does not hold for elliptic systems, the method developed for the scalar conductivity problem cannot be directly applied to the Lam\'e system. To address this fundamental challenge, the second author, in collaboration with Bao and Li, developed an energy iteration technique and established following pointwise upper bounds of $|\nabla{\bf u}|$ in $\Omega_{R}$ in \cite{bll1,bll2}:
\begin{equation}\label{gradientest}
\begin{aligned}
    &|\nabla{\bf u}(x_{1},x_{2})|\leq\frac{C}{\sqrt{\varepsilon}+|x_{1}|}\quad\mbox{if}~d=2,\\
    &|\nabla{\bf u}(x',x_{3})|\leq\frac{C}{|\log\varepsilon|(\varepsilon+|x'|^{2})}+\frac{C}{(\varepsilon+|x'|^{2})^{1/2}}\quad\mbox{if}~d=3.
\end{aligned}
\end{equation}
These pointwise estimates clearly illustrate the stress concentration. However, to accurately simulate the singular behavior of $\nabla{\bf u}$, analyzing the gradient blow-up alone is insufficient for designing an effective numerical scheme. Therefore, it becomes essential to examine the singularities in the higher derivatives of the solution to \eqref{maineqn} in the neck region $\Omega_{R}$.  

A crucial element in proving \eqref{gradientest} in \cite{bll1,bll2} involves the construction of an auxiliary function in the narrow region $\Omega_{R}$. This function is formed by combining the Keller-type function with the basis function $\boldsymbol{\psi}_{\alpha}$. In the context of the regularity theory of partial differential equations, a natural question arises: Can we construct precise auxiliary functions to effectively capture all the main singular terms of the higher derivatives $\nabla^{m}{\bf u}$ in the narrow region $\Omega_{R}$ for $m\geq2$? To the best of our knowledge, this question remains unanswered. In this paper, we provide a positive answer. While the analogous result for the scalar perfect conductivity problem was recently addressed in \cite{LLYZ}, the linear elasticity problem \eqref{maineqn} presents greater technical challenges than the scalar case.

According to the standard theory for elliptic systems, we know that $\|\nabla^{m} {\bf u}\|_{L^{\infty}(\Omega\setminus\Omega_{R})}$ is bounded for $m\ge 1$. Our first main result focuses on higher derivative estimates for problem \eqref{maineqn} in the narrow region $\Omega_{R}$ in two dimensions.
\begin{theorem}\label{mainth}
Assume that $\Omega$, $D_1$, and $D_2$ are defined as above in dimension $d=2$, and $\boldsymbol{\varphi} \in C^{m,\alpha}(\partial D,\mathbb{R}^2)$ for an integer $m\ge 1$ and $0<\alpha<1$. Let ${\bf u}\in H^1(D,\mathbb{R}^2)\cap C^{m}(\bar{\Omega},\mathbb{R}^2)$ be a solution to \eqref{maineqn}. Then for $0\le \varepsilon<1/4$, we have
\begin{align}\label{mthm_equ1}
|\nabla^{m}{\bf u}(x_{1},x_{2})|\leq\frac{C}{(\varepsilon+x_1^2)^{m/2}}\quad\mbox{for}~(x_{1},x_{2})\in\Omega_R.
\end{align} 
\end{theorem}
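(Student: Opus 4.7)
The proof proceeds via a three-stage strategy: (i) decompose the solution so that the singular behavior is isolated in a finite family of canonical auxiliary problems; (ii) construct higher-order auxiliary functions that capture every singular term up to the $m$-th derivative; (iii) control the remainder by an iterated weighted-energy estimate. The scheme parallels the recent scalar-case analysis of \cite{LLYZ}, with the additional complication that the Lam\'e operator couples its components through $\mu\Delta$ and $(\lambda+\mu)\nabla(\nabla\cdot)$.

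First, following \cite{bll1}, write ${\bf u} = \sum_{i=1,2}\sum_{\alpha=1}^{3} C_i^\alpha {\bf v}_i^\alpha + {\bf v}_0$, where ${\bf v}_i^\alpha$ solves $\mathcal{L}_{\lambda,\mu}{\bf v}_i^\alpha=0$ in $\Omega$ with boundary value $\boldsymbol{\psi}_\alpha$ on $\partial D_i$, $\boldsymbol{0}$ on $\partial D_j$ for $j\neq i$, and $\boldsymbol{0}$ on $\partial D$, while ${\bf v}_0$ absorbs the exterior data $\boldsymbol{\varphi}$ with zero trace on the inclusions. Since ${\bf v}_0$ vanishes on both inclusion boundaries, standard elliptic regularity inside the neck yields $\|\nabla^m {\bf v}_0\|_{L^\infty(\Omega_R)}\leq C$ uniformly in $\varepsilon$. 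The coefficients $C_i^\alpha$, determined by the integral constraints in \eqref{maineqn}, are bounded uniformly in $\varepsilon$ by the energy-balance argument of \cite{bll1,bll2}. Thus the theorem reduces to proving
\[
|\nabla^m {\bf v}_i^\alpha(x)|\leq \frac{C}{(\varepsilon+x_1^2)^{m/2}}\quad \text{in}\ \Omega_R,\quad i=1,2,\ \alpha=1,2,3.
\]

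For each $(i,\alpha)$, I would construct a smooth auxiliary field $\overline{\bf v}_i^{\alpha}$ in $\Omega_R$ with the correct rigid boundary values on $\Gamma^{\pm}_R$ such that $\mathcal{L}_{\lambda,\mu}\overline{\bf v}_i^\alpha$ is subcritically small while $|\nabla^j \overline{\bf v}_i^\alpha|\leq C(\varepsilon+x_1^2)^{-j/2}$ for every $0\leq j\leq m$. The leading term is the Keller-type linear interpolation from \cite{bll1}, e.g.\ for $i=1$,
\[
\overline{\bf v}_1^{\alpha,(0)}(x_1,x_2)=\boldsymbol{\psi}_\alpha(x)\,\frac{x_2+\varepsilon/2+h_2(x_1)}{\varepsilon+h_1(x_1)+h_2(x_1)}.
\]
Successive corrections $\overline{\bf v}_i^{\alpha,(k)}$, polynomial in the transverse variable $x_2$ with coefficients depending smoothly on $x_1$, are chosen to cancel order by order the residual of $\mathcal{L}_{\lambda,\mu}$ applied to the previous terms, in the spirit of a formal matched-asymptotic expansion in the small parameter $\sqrt{\varepsilon+x_1^2}$. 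Because the Lam\'e operator couples its components, each correction is itself vector-valued and must simultaneously suppress both the divergence and Laplacian blocks without enlarging any $\nabla^j$ norm beyond the target scale.

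Setting ${\bf w}_i^\alpha:={\bf v}_i^\alpha-\overline{\bf v}_i^\alpha$, the remainder solves a Lam\'e system in $\Omega_R$ with small interior source and zero boundary data on $\Gamma^{\pm}_R$, while being of the expected order on the lateral boundary $\{|x_1|=R\}$. I then run an iterated weighted-energy argument on a nested family of shrinking cylinders $\Omega_{s}\subset \Omega_t$, of the type used in \cite{bll1,bll2} but differentiated $m$ times and combined with Caccioppoli inequalities carrying $(\varepsilon+x_1^2)$-weights adapted to the parabolic-like geometry of the neck. This yields $|\nabla^m {\bf w}_i^\alpha|\leq C(\varepsilon+x_1^2)^{-(m-1)/2}$, strictly subcritical compared with $\overline{\bf v}_i^\alpha$, and adding the two contributions gives \eqref{mthm_equ1}. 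The hardest step is the second one: maintaining the sharp $(\varepsilon+x_1^2)^{-j/2}$ scaling of $\overline{\bf v}_i^\alpha$ uniformly in $j\leq m$ requires exploiting delicate cancellations between the rigid-motion boundary data and the transverse polynomial corrections, and tracking these cancellations simultaneously for the divergence and the Laplacian parts of $\mathcal{L}_{\lambda,\mu}$ is the genuine obstruction that was absent from the scalar case \cite{LLYZ}.
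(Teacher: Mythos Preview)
Your overall architecture---decomposition into canonical pieces, higher-order auxiliary functions, iterated energy estimates---matches the paper's. However, the reduction you state is wrong, and the target bound you set for each building block is false.

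Concretely: you assert that the theorem ``reduces to proving
\[
|\nabla^m {\bf v}_i^\alpha(x)|\leq \frac{C}{(\varepsilon+x_1^2)^{m/2}}\quad\text{in}\ \Omega_R,\quad i=1,2,\ \alpha=1,2,3,
\]
'' and then that the auxiliary field satisfies $|\nabla^j \overline{\bf v}_i^\alpha|\leq C(\varepsilon+x_1^2)^{-j/2}$ for all $j\leq m$. For $\alpha=1,2$ this already fails at $j=1$: your own leading term $\overline{\bf v}_1^{\alpha,(0)}=\boldsymbol{\psi}_\alpha\,(x_2+\varepsilon/2+h_2)/\delta(x_1)$ has $\partial_{x_2}\overline{\bf v}_1^{\alpha,(0)}=\boldsymbol{\psi}_\alpha/\delta(x_1)$, so $|\nabla\overline{\bf v}_1^{\alpha,(0)}|\sim\delta(x_1)^{-1}$, not $\delta(x_1)^{-1/2}$. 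No polynomial-in-$x_2$ correction vanishing on $\Gamma^\pm$ can remove this, since the boundary jump of size $1$ across a gap of width $\delta$ forces $|\nabla{\bf v}_1^\alpha|\gtrsim\delta^{-1}$. The paper in fact proves the \emph{worse} estimate $|\nabla^m{\bf u}_{1\alpha}|\leq C\delta(x_1)^{-(m+1)/2}$ for $\alpha=1,2$ (Theorem~\ref{thm3.1}), and this is sharp.

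What you are missing is a regrouping of the decomposition together with a smallness estimate on the free constants. The paper writes
\[
\nabla^m{\bf u}=\sum_{\alpha=1}^{3}(C_1^\alpha-C_2^\alpha)\nabla^m{\bf u}_{1\alpha}+\nabla^m{\bf u}_b,\qquad {\bf u}_b=\sum_{\alpha}C_2^\alpha({\bf u}_{1\alpha}+{\bf u}_{2\alpha})+{\bf u}_0,
\]
so that ${\bf u}_{1\alpha}+{\bf u}_{2\alpha}$ equals $\boldsymbol{\psi}_\alpha$ on \emph{both} inclusion boundaries and hence has bounded higher derivatives in $\Omega_R$ (Theorem~\ref{thm31}). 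The singular piece $\nabla^m{\bf u}_{1\alpha}$ is then multiplied by $C_1^\alpha-C_2^\alpha$, and for $\alpha=1,2$ one has $|C_1^\alpha-C_2^\alpha|\leq C\sqrt{\varepsilon}$ (Lemma~\ref{cdgj}). Since $\sqrt{\varepsilon}\leq\sqrt{\delta(x_1)}$, this extra factor downgrades $\delta^{-(m+1)/2}$ to the desired $\delta^{-m/2}$. For $\alpha=3$ the rotation $\boldsymbol{\psi}_3$ contributes an extra factor $|x_2|\lesssim\delta$, so $|\nabla^m{\bf u}_{13}|\leq C\delta^{-m/2}$ holds directly (Theorem~\ref{thm3.3}) and mere boundedness of $C_1^3-C_2^3$ suffices. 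Your proposal, by bounding only $|C_i^\alpha|\leq C$ and demanding $\delta^{-m/2}$ from each ${\bf v}_i^\alpha$ separately, cannot close for $\alpha=1,2$.
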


\begin{remark} Note that if $m=1$, estimate \eqref{mthm_equ1} is consistent with  \eqref{gradientest} previously derived in \cite{bll1,LX23}. These upper bounds for higher derivatives are proved to be sharp in Theorem \ref{thm-5.12} when the domain and boundary data satisfy additional symmetry conditions. An asymptotic expansion of higher derivatives is also established to precisely quantify this singularity in Section \ref{sec5}; see Theorem \ref{mainth-5.3}. 
\end{remark}

Our method can be extended to deal with higher dimensional cases.

\begin{theorem}\label{mainth3d}
Assume that $\Omega$, $D_1$, and $D_2$ are defined as above in dimension $d\ge3$, and $\boldsymbol{\varphi} \in C^{m,\alpha}(\partial D,\mathbb{R}^d)$ for an integer $m\ge 1$ and $0<\alpha<1$. Let ${\bf u}\in H^1(D,\mathbb{R}^d)\cap C^{m}(\bar{\Omega},\mathbb{R}^d)$ be a solution to \eqref{maineqn}. Then for $0\le \varepsilon<1/4$, we have
\begin{align*}
|\nabla^{m}{\bf u}(x',x_{3})|\leq 
\frac{C}{|\log\varepsilon|(\varepsilon+|x'|^2)^{\frac{m+1}{2}}}+\frac{C}{(\varepsilon+|x'|^2)^{m/2}}\quad\mbox{for}~(x',x_{3})\in\Omega_R,
\end{align*} 
and
\begin{align*}
|\nabla^{m}{\bf u}(x',x_{d})|\leq 
\frac{C}{(\varepsilon+|x'|^2)^{\frac{m+1}{2}}}\quad\mbox{for}~(x',x_{d})\in\Omega_R, ~d\ge 4.
\end{align*} 
\end{theorem}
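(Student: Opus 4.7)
The strategy is to mirror the proof of Theorem \ref{mainth} while upgrading each ingredient to accommodate the dimension-dependent singularity rates stated for $d=3$ and $d\geq 4$. The starting point is the decomposition $\mathbf{u}=\sum_{\alpha=1}^{d(d+1)/2}\bigl(C_1^\alpha \mathbf{v}_1^\alpha+C_2^\alpha \mathbf{v}_2^\alpha\bigr)+\mathbf{v}_0$ from \cite{bll1,bll2}, in which each $\mathbf{v}_i^\alpha$ solves the homogeneous Lam\'e system in $\Omega$ with boundary data $\boldsymbol{\psi}_\alpha\chi_{\partial D_i}$ and $\mathbf{v}_0$ carries the given datum $\boldsymbol{\varphi}$ on $\partial D$. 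The integral balance conditions in \eqref{maineqn} control the constants $|C_i^\alpha|$ with the characteristic prefactor $|\log\varepsilon|^{-1}$ when $d=3$ and by an $O(1)$ constant when $d\geq 4$, as already established in the gradient-estimate literature cited in the introduction. Standard elliptic regularity away from the neck handles $\nabla^m\mathbf{u}$ on $\Omega\setminus\Omega_R$, so the entire analysis reduces to proving sharp $\nabla^m$-bounds on $\mathbf{v}_i^\alpha$ inside $\Omega_R$.

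The central construction is, for each pair $(i,\alpha)$, an explicit auxiliary field $\bar{\mathbf{v}}_i^\alpha\in C^{m+1,\gamma}(\overline{\Omega_R})$ which equals $\boldsymbol{\psi}_\alpha$ on $\overline{\Omega_R}\cap\partial D_i$ and vanishes on $\overline{\Omega_R}\cap\partial D_{3-i}$, whose higher derivatives satisfy $|\nabla^m\bar{\mathbf{v}}_i^\alpha(x',x_d)|\leq C(\varepsilon+|x'|^2)^{-m/2}$ by direct differentiation, and whose residual $\mathcal{L}_{\lambda,\mu}\bar{\mathbf{v}}_i^\alpha$ is small enough to feed into a subsequent iteration. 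The building block is a Keller-type scalar interpolant between $\Gamma^+_R$ and $\Gamma^-_R$ multiplied by $\boldsymbol{\psi}_\alpha$; to this I would add a finite hierarchy of correctors, defined inductively so that each new corrector cancels the leading residue produced by its predecessor under $\mathcal{L}_{\lambda,\mu}$, exactly as in the 2D construction behind Theorem \ref{mainth} but with the horizontal variable now ranging over a $(d-1)$-dimensional slice. For the remainder $\mathbf{w}_i^\alpha:=\mathbf{v}_i^\alpha-\bar{\mathbf{v}}_i^\alpha$ the boundary data vanish on $\partial D_1\cup\partial D_2$ and the source is the controllable residue above, so $\mathbf{w}_i^\alpha$ can be bootstrapped via the same kind of rescaling argument used in 2D: fix $x_0=(x_0',x_{0,d})\in\Omega_R$, set $\delta=\sqrt{\varepsilon+|x_0'|^2}$, straighten the thin strip of size $\delta\times\delta^2$ near $x_0$, and apply up-to-boundary Schauder estimates on the flattened unit-scale domain to obtain the needed $\nabla^m\mathbf{w}_i^\alpha$ bound. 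Assembling $\bar{\mathbf{v}}_i^\alpha$, $\mathbf{w}_i^\alpha$, and the coefficient bounds on $C_i^\alpha$, and separately treating $\mathbf{v}_0$ by interior and boundary regularity, produces both stated estimates.

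The principal technical obstacle is the strongly anisotropic geometry of $\Omega_R$: the vertical thickness at horizontal position $x'$ is $O(\varepsilon+|x'|^2)=O(\delta^2)$ while the natural horizontal scale is $O(\delta)$. A naive isotropic rescaling on a ball of radius $\delta$ fails to stay inside $\Omega_R$, whereas scaling down to the genuine thickness $\delta^2$ loses information and returns suboptimal powers of $\delta$. The remedy is to introduce a non-isotropic change of variables that flattens both $\Gamma^+_R$ and $\Gamma^-_R$ and makes the rescaled strip uniformly thick, then carefully track how the Lam\'e operator transforms and how the ellipticity constants and the $C^{m,\gamma}$-norms of the transformed coefficients behave; this is the step where the inductive construction of the correctors in $\bar{\mathbf{v}}_i^\alpha$ must interact cleanly with the straightening transform. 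A further dimension-specific subtlety arises in $d=3$, where the $|\log\varepsilon|^{-1}$ factor must be propagated through every step of the iteration without being lost to the rescaling, which requires matching the correctors to the global energy balance driving the estimate of $C_i^\alpha$ in \cite{bll2}. Once these two points are handled, the cases $d=3$ and $d\geq 4$ proceed in parallel with the argument proving Theorem \ref{mainth}.
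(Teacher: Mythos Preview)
Your proposal contains a genuine gap that would prevent the argument from closing, and the gap stems from two compensating but incorrect claims. First, the individual constants $|C_i^\alpha|$ are \emph{not} controlled by $|\log\varepsilon|^{-1}$ in $d=3$; they are merely $O(1)$. What is small is only the \emph{difference} $|C_1^\alpha-C_2^\alpha|\le C|\log\varepsilon|^{-1}$, and only for the translational indices $\alpha=1,\dots,d$ (Lemma~\ref{cdgj3d}). Second, your asserted bound $|\nabla^m\bar{\mathbf v}_i^\alpha|\le C(\varepsilon+|x'|^2)^{-m/2}$ cannot hold for the translational modes: the Keller interpolant $(x_d+\varepsilon/2+h_2)/\delta(x')\,\boldsymbol e_\alpha$ already has $\partial_{x_d}$-derivative equal to $\delta(x')^{-1}$, so the correct rate is $\delta(x')^{-(m+1)/2}$, and no corrector hierarchy can reduce that order (it is the sharp rate for $\nabla^m{\bf u}_{1\alpha}$; see Theorem~\ref{thm4.1}). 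With $|C_i^\alpha|\sim 1$ and $|\nabla^m{\bf v}_i^\alpha|\sim\delta^{-(m+1)/2}$, your direct assembly $\sum_{i,\alpha}C_i^\alpha\nabla^m{\bf v}_i^\alpha$ yields only $\delta^{-(m+1)/2}$ in $d=3$, with no $|\log\varepsilon|^{-1}$ gain.

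The missing idea is the regrouping
\[
\nabla^m{\bf u}=\sum_{\alpha}(C_1^\alpha-C_2^\alpha)\nabla^m{\bf u}_{1\alpha}+\nabla^m\Big(\sum_\alpha C_2^\alpha({\bf u}_{1\alpha}+{\bf u}_{2\alpha})+{\bf u}_0\Big),
\]
used in \eqref{nablau_dec3d}. The sum ${\bf u}_{1\alpha}+{\bf u}_{2\alpha}$ equals $\boldsymbol\psi_\alpha$ on \emph{both} inclusions, hence the gap plays no role and $\nabla^m({\bf u}_{1\alpha}+{\bf u}_{2\alpha})$ is exponentially small in the neck (see \eqref{ubgj3d}--\eqref{ubgj3d1}). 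The remaining singular piece carries only the small coefficient $C_1^\alpha-C_2^\alpha$. One must then distinguish translational modes $\alpha\le d$, for which $|\nabla^m{\bf u}_{1\alpha}|\le C\delta^{-(m+1)/2}$ and $|C_1^\alpha-C_2^\alpha|\le C|\log\varepsilon|^{-1}$, from rotational modes $\alpha>d$, for which the better rate $|\nabla^m{\bf u}_{1\alpha}|\le C\delta^{-m/2}$ holds (Theorem~\ref{thm4v33d}) but only $|C_1^\alpha-C_2^\alpha|\le C$ is available. These two contributions produce precisely the two terms in the $d=3$ estimate. Finally, the $|\log\varepsilon|^{-1}$ factor is \emph{not} ``propagated through the iteration'' as you suggest; it enters only once, at the very end, through the coefficient bound.
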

\begin{remark}
The optimality of the blow-up rate $|\log\varepsilon|^{-1}\varepsilon^{-(m+1)/2}$ in dimension three is shown in Theorem \ref{thm-5.12} and an asymptotic formula is given in Theorem \ref{mainth23d}. 
\end{remark}

Finally, we establish an upper bound estimate for the higher-order derivatives of the solution to the Lam\'e system with two closely spaced holes:
\begin{align}\label{maineqn222}
	\begin{cases}
		\mathcal{L}_{\lambda, \mu}{\bf u}:=\nabla\cdot(\mathbb{C}^0{\bf e}({\bf u}))=0\quad&\hbox{in}\ \Omega,\\
	\frac{\partial {\bf u}}{\partial \nu}\Big|_{+}=0&\hbox{on}\ \partial{D}_{i},~i=1,2,\\
		{\bf u}=\boldsymbol{\varphi}&\hbox{on}\ \partial{D},
	\end{cases}
\end{align}
where $\nu$ denotes the unit outward normal vector on $\partial{D}_{i}$.
\begin{theorem}\label{mainthinsl}
	Assume that $\Omega$, $D_1$, and $D_2$ are defined as in Theorem \ref{mainth}. Let ${\bf u}\in H^1(D,\mathbb{R}^d)\cap C^{m}(\bar{\Omega},\mathbb{R}^d)$ be a solution to \eqref{maineqn222}. Then for $0\le \varepsilon<1/4$, there holds 
	\begin{align}\label{inslut1}
		|\nabla^{m}{\bf u}(x',x_{d})|\leq C\|{\bf u}\|_{L^\infty (\Omega_{R})}(\varepsilon+|x'|^2)^{-m/2}\quad\mbox{for}~(x',x_{d})\in\Omega_R.
	\end{align} 
\end{theorem}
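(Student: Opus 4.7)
The plan is to prove \eqref{inslut1} by a localized rescaling argument combined with boundary Schauder estimates for elliptic systems with Neumann boundary conditions. Fix a point $z_0=(z_0',z_{0,d})\in\Omega_R$ and introduce the intrinsic length scale $\rho:=(\varepsilon+|z_0'|^2)^{1/2}$; the goal reduces to proving $|\nabla^m{\bf u}(z_0)|\leq C\rho^{-m}\|{\bf u}\|_{L^\infty(\Omega_R)}$ with $C$ independent of $z_0$ and $\varepsilon$.

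First I would introduce a change of variables $\Phi:(y',y_d)\mapsto(x',x_d)$ that simultaneously rescales the horizontal direction by $\rho$ and straightens the two inclusion boundaries into the hyperplanes $\{y_d=\pm 1/2\}$. Concretely, take
\begin{equation*}
x'=z_0'+\rho y',\qquad x_d=\tfrac{1}{2}(h_1-h_2)(z_0'+\rho y')+y_d\,\delta(z_0'+\rho y'),
\end{equation*}
with $\delta(x'):=\varepsilon+h_1(x')+h_2(x')\sim\rho^2$ on $B'_\rho(z_0')$ by \eqref{h1h1}. Then $\Phi$ maps the unit cylinder $Q:=B'_{1/2}(0)\times(-1/2,1/2)$ onto a cell $U\subset\Omega$ containing $z_0$, and the inclusion boundaries are sent to $\{y_d=\pm 1/2\}$, where the Neumann condition $\partial{\bf u}/\partial\nu|_+=0$ transforms into a Neumann-type condition for the pulled-back function $\hat{\bf u}(y):={\bf u}(\Phi(y))$. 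Under this change of variables $\hat{\bf u}$ satisfies a transformed elliptic system $\widetilde{\mathcal{L}}_\rho\hat{\bf u}=0$ on $Q$ whose coefficients depend on derivatives of $h_1,h_2$ evaluated at scale $\rho$ and are bounded in $C^{m-1,\gamma}$ uniformly in $\rho$ by \eqref{h1h14}.

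Because of the asymmetry between the horizontal scale $\rho$ and the vertical scale $\delta\sim\rho^2$, the operator $\widetilde{\mathcal{L}}_\rho$ is not uniformly elliptic in the naive sense. To remedy this I would further rescale the vertical displacement, introducing $\bar{\bf u}:=(\hat{\bf u}^{(1)},\dots,\hat{\bf u}^{(d-1)},\rho\,\hat{\bf u}^{(d)})$, i.e.\ the component-wise rescaling dictated by the thin-plate scaling of the Lam\'e operator $\mu\Delta+(\lambda+\mu)\nabla\mathrm{div}$. A direct calculation shows that, after this rescaling, the system for $\bar{\bf u}$ becomes uniformly elliptic on $Q$ with $C^{m-1,\gamma}$-coefficients whose norms are controlled independently of $\rho$ by \eqref{h1h14}, and the accompanying conormal boundary operator becomes a non-degenerate Neumann-type operator on $\{y_d=\pm 1/2\}$. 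Standard boundary Schauder estimates of Agmon--Douglis--Nirenberg type then yield
\begin{equation*}
\|\bar{\bf u}\|_{C^{m,\gamma}(Q_{1/4})}\leq C\|\bar{\bf u}\|_{L^\infty(Q)}\leq C\|{\bf u}\|_{L^\infty(U)}\leq C\|{\bf u}\|_{L^\infty(\Omega_R)},
\end{equation*}
where the vertical rescaling only decreases the $L^\infty$ norm, so no new dependence on $\rho$ appears on the right-hand side.

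Finally I would unwind the change of variables. By the chain rule, each horizontal derivative $\partial_{x_i}$ with $i<d$ contributes a factor $\rho^{-1}$, while $\partial_{x_d}$ contributes a factor $\delta^{-1}\sim\rho^{-2}$; the extra factor $\rho$ carried by the vertical component of $\bar{\bf u}$ cancels exactly one of these negative powers, so that every $m$-th order derivative of ${\bf u}$ at $z_0$ scales like $\rho^{-m}$ times the $C^{m,\gamma}$ norm of $\bar{\bf u}$, which gives \eqref{inslut1}. The main obstacle in the argument is the verification, at the third step, that the combined horizontal-vertical anisotropic change of variables together with the component-wise rescaling produces a system for $\bar{\bf u}$ that is genuinely uniformly elliptic on $Q$ and is paired with a non-degenerate Neumann boundary operator, uniformly in $\rho$. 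This book-keeping requires a careful explicit computation of how $\mu\Delta+(\lambda+\mu)\nabla\mathrm{div}$ and its conormal derivative transform under $\Phi$, and it is precisely here that the thin-neck geometry of $\Omega_R$ encoded in \eqref{h1-h2}--\eqref{h1h14} is used.
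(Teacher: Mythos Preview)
Your proposal contains a genuine gap at the crucial third step. The claim that the component rescaling $\bar{\bf u}=(\hat{\bf u}^{(1)},\dots,\hat{\bf u}^{(d-1)},\rho\,\hat{\bf u}^{(d)})$ renders the transformed system uniformly elliptic on $Q$, uniformly in $\rho$, is false. Under your map $\Phi$ one has $\partial_{x_i}\sim\rho^{-1}\partial_{y_i}$ for $i<d$ and $\partial_{x_d}\sim\rho^{-2}\partial_{y_d}$; writing out $\mu\Delta+(\lambda+\mu)\nabla\mathrm{div}$ in the $y$ variables and inserting $\hat{\bf u}^{(d)}=\rho^{-1}\bar{\bf u}^{(d)}$, the principal part of the equation for $\bar{\bf u}^{(i)}$ with $i<d$ becomes, after clearing the overall factor $\rho^{-4}$,
\[
\mu\,\partial_{y_d}^2\bar{\bf u}^{(i)}+(\lambda+\mu)\,\partial_{y_i}\partial_{y_d}\bar{\bf u}^{(d)}+O(\rho^2),
\]
and for $i=d$ it becomes $(\lambda+2\mu)\,\partial_{y_d}^2\bar{\bf u}^{(d)}+O(\rho^2)$. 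The $\rho\to0$ limit is an ODE system in $y_d$ with no ellipticity in the tangential directions; no Schauder theory applies uniformly. (This is the familiar singular--perturbation structure of thin--plate asymptotics, not a uniform rescaling.) The same miscount shows up in your last paragraph: for a pure vertical derivative $\partial_{x_d}^m{\bf u}^{(i)}$ with $i<d$ the chain rule gives a factor $\rho^{-2m}$, and the component rescaling of $\bar{\bf u}^{(d)}$ does nothing to help.

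What the paper actually does avoids this degeneration entirely. It first rescales \emph{isotropically} by $\sqrt{\delta(x_0')}$, which preserves the Lam\'e structure but lands on a thin cylinder $Q_{1,\sqrt{\delta}}$ of height $\sqrt{\delta}$ rather than on a unit cube; a second, nearly--identity change of variables flattens the two boundaries while keeping the coefficients uniformly elliptic with uniformly bounded $C^{m-1,\mu}$ norms. The thinness is then handled not by further scaling but by even/odd \emph{reflection} across $\{y_d=\pm\sqrt{\delta}\}$, which turns the Neumann problem on the thin slab into a divergence--form system on the unit cube $Q_{1,1}$ with coefficients that are piecewise $C^{m-1,\mu}$ in the $y_d$ direction (a laminate). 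One then invokes the gradient estimate for laminated systems from \cite{D2012} to get $\|\nabla{\bf v}\|_{C^\mu}\le C\|{\bf v}\|_{L^2}$, bootstraps by differentiating tangentially in $z_k$, $k<d$, and finally recovers the pure $\partial_{x_d}^2$ derivatives algebraically from the equation itself. Undoing the isotropic rescaling gives exactly one factor $\delta^{-1/2}$ per derivative, yielding \eqref{inslut1}.
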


The remaining part of this paper is structured as follows. In Section \ref{sec2}, we develop some new ingredients in dimension two for the Lam\'e systems with specified Dirichlet boundary data $\boldsymbol{\psi}_{\alpha}$ on $\partial{D}_{1}$ and $\partial{D}_{2}$, as given in \eqref{equ_v1} below. For each $\boldsymbol{\psi}_{\alpha}$, we construct $m$ auxiliary vector-valued functions ${\bf v}_{\alpha}^{l}(x)=(({\bf v}_{\alpha}^{l})^{(1)}(x),
({\bf v}_{\alpha}^{l})^{(2)}(x))^{T}$, $1\leq\,l\leq m$, $\alpha=1,2,3$, and then demonstrate that these auxiliary functions can effectively capture all the singularity of $\nabla^m {\bf u}_{1\alpha}$ up to $O(1)$. The construction of these functions is intricate and depends on the coefficients of Lam\'e system. Using an energy iteration technique, we reduce the estimates of $\|\nabla^{m}({\bf u}_{1\alpha}-\sum_{l=1}^{m}{\bf v}_{\alpha}^{l})\|_{L^\infty}$ to estimating $|\mathcal{L}_{\lambda,\mu}\sum_{l=1}^{m}{\bf v}_{\alpha}^{l}|$, which decreases when $m$ increases. This approach is entirely novel and significantly improves the previous results, extending the gradient estimates to higher-order derivative estimates. In Section \ref{sec3}, we prove Theorems \ref{mainth} by utilizing of the estimates proved in Section \ref{sec2}. In Section \ref{sec4}, we provide details on the construction of the auxiliary function vectors in dimension three, highlight the differences from the two-dimensional case, and prove Theorem \ref{mainth3d}. In Section \ref{sec5}, under some additional symmetry assumptions on the domain and boundary data, we establish the precise asymptotic characterizations of the higher derivatives and prove the optimality of the upper bounds by deriving a corresponding lower bound in dimensions two and three. In Section \ref{sec6}, we establish an upper bound estimate for the higher-order derivatives of the solution to the Lam\'e system with two closely spaced holes. A structured framework for the energy method is provided in the Appendix.

In the proofs of our main theorems below, we assume $\varepsilon>0$, though the arguments remains valid when $\varepsilon=0$, as the implicit constants do not blow up as $\varepsilon\to 0$.

%\vspace{.5cm}
\section{New Ingredients for the Problem in Dimension Two}\label{sec2} 

\begin{figure}[t]\centering
\begin{tikzpicture}[scale=1.8, semithick]
 % \draw (-2, -2.1) rectangle (2, 2.1) + (-0.4, -0.4) node {$\Omega$};
%  \draw (2.2, 0) node {$\Gamma$};
  \draw (0, 0.8) circle [x radius=0.8, y radius=0.8] node {$B_1$} +(1.25, 0) node {$u=1$};
  \draw (0, -0.8) circle [x radius=0.8, y radius=0.8] node {$B_2$} +(1.25, 0) node {$u=0$};
\end{tikzpicture}
\caption{The domain $\mathbb{R}^{2}\backslash \overline{B_1\cup B_2}$.}\label{figure2}
\vspace{5pt}
\end{figure}
Let $D_1=B_{1}(0,1)$ and $D_2=B_{2}(0,-1)$ be two unit disks centered at $(0,1)$ and $(0,-1)$, so their closure touch at the origin (i.e., $\epsilon=0$), as Figure \ref{figure2}. For the Dirichlet problem
\begin{equation}\label{laplace}
\begin{cases}
\Delta u=0&\mathrm{in}~\mathbb{R}^{2}\setminus\overline{D_{1}\cup{D}_{2}},\\
\quad u=1&\mathrm{on}~\partial{D}_{1}\setminus\{0\},\\
\quad u=0&\mathrm{on}~\partial{D}_{2},
\end{cases}
\end{equation}
by using conformal transform, it is well known that $u=\frac{x_{2}}{x_{1}^{2}+x_{2}^{2}}+\frac{1}{2}$ is a singular solution. Moreover, any solution of \eqref{laplace} can be written as the sum of $u$ and a function which has bounded derivatives of any order. In other words, $u$ captures the singular behavior of any solution. A natural question is: when $\epsilon>0$, can we find explicit approximate functions, which captures the singular behavior of the solution to \eqref{laplace} in the narrow region $\Omega_R$? How about the Dirichlet problem for the Lam\'e system?

 Assume that $D_{1}$ and $D_2$ have a small separation distance $\varepsilon>0$. In this section, we will establish higher derivative estimates of the solution to the following Dirichlet problems of Lam\'e system in dimension two. Let ${\bf u}_{1\alpha}$, $\alpha=1,2,3$, be the solution to 
\begin{equation}\label{equ_v1}
\begin{cases}
\mathcal{L}_{\lambda,\mu}{\bf u}_{1\alpha}=0&\mathrm{in}~\Omega,\\
{\bf u}_{1\alpha}=\boldsymbol{\psi}_{\alpha}&\mathrm{on}~\partial{D}_{1},\\
{\bf u}_{1\alpha}=0&\mathrm{on}~\partial{D_{2}}\cup\partial{D},
\end{cases}
\end{equation}
where 
$${\boldsymbol\psi}_{1}=\begin{pmatrix}
1 \\
0
\end{pmatrix},\quad
{\boldsymbol\psi}_{2}=\begin{pmatrix}
0\\
1
\end{pmatrix},\quad
{\boldsymbol\psi}_{3}=\begin{pmatrix}
x_{2}\\
-x_{1}
\end{pmatrix}.$$
The key steps of our proof are to construct a series of explicit functions, ${\bf v}_{\alpha}^{l}(x)=(({\bf v}_{\alpha}^{l})^{(1)}(x),$ $
({\bf v}_{\alpha}^{l})^{(2)}(x))^{T}$, to approximate the solutions ${\bf u}_{1\alpha}$, $\alpha=1,2,3$, and to prove that the higher derivatives of these functions, $\nabla^{m}\sum_{l=1}^{m}{\bf v}_{\alpha}^{l}$, can fully capture the singularity of $\nabla^{m}{\bf u}_{1\alpha}$ in $\Omega_R$, up to $O(1)$. 

Recall that
\begin{equation*}
(\mathcal{L}_{\lambda,\mu}{\bf u})^{(1)}=\mu\Delta {\bf u}^{(1)}+(\lambda+\mu)\partial_{x_1}(\nabla\cdot{\bf u}),
\end{equation*}
\begin{equation*}
	(\mathcal{L}_{\lambda,\mu}{\bf u})^{(2)}=\mu\Delta {\bf u}^{(2)}+(\lambda+\mu)\partial_{x_2}(\nabla\cdot{\bf u}).
\end{equation*}

\subsection{Estimates of $\nabla^{m}{\bf u}_{11}$}

The following theorem about the singular behavior of $\nabla^{m}{\bf u}_{1\alpha}$ is an important step to prove Theorems \ref{mainth}, which is also an essential improvement of previous results on gradient estimates in \cite{bll1,LX23}. 

\begin{theorem}\label{thm3.1}
Under the same assumption as in Theorem \ref{mainth}, let ${\bf u}_{1\alpha}$ be the solution to \eqref{equ_v1} for $\alpha=1, 2$. Then for sufficiently small $0<\varepsilon<1/2$ and for $m\ge 1$, we have 
\begin{align*}%\label{tl2d}
	|\nabla^{m}{\bf u}_{1\alpha}(x) | \le \frac{C}{(\varepsilon+|x_{1}|^{2})^{\frac{m+1}{2}}}\quad\text{for}~x\in\Omega_{R}.
\end{align*}
\end{theorem}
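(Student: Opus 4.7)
My plan is to follow the auxiliary-function-plus-energy-iteration strategy hinted at in the section introduction. In the neck $\Omega_R$ the vertical gap has width $\delta(x_1):=\varepsilon+h_1(x_1)+h_2(x_1)$, comparable to $\varepsilon+x_1^2$, so for $\alpha=1,2$ the natural leading ansatz is a linear interpolation in $x_2$,
\[
{\bf v}_\alpha^1(x_1,x_2):=\frac{x_2+\tfrac{\varepsilon}{2}+h_2(x_1)}{\delta(x_1)}\,\boldsymbol{\psi}_\alpha,
\]
which matches the Dirichlet data $\boldsymbol{\psi}_\alpha$ on $\Gamma_R^+$ and $0$ on $\Gamma_R^-$ and satisfies $|\nabla{\bf v}_\alpha^1|\le C/\delta(x_1)$, the expected first-derivative rate. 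However $\mathcal{L}_{\lambda,\mu}{\bf v}_\alpha^1\neq 0$: a direct computation using $|\partial_{x_1}^j h_i|\le C|x_1|^{2-j}$ shows the residual is of order $|x_1|^2/\delta(x_1)^2$. I would then build corrector vectors ${\bf v}_\alpha^2,\dots,{\bf v}_\alpha^m$ inductively so that each $x_2$-polynomial and the cross-component are chosen to kill the leading part of the previous residual. Because $\mathcal{L}_{\lambda,\mu}$ couples the two components through the term $(\lambda+\mu)\nabla(\nabla\cdot\,)$, these correctors must be genuinely vector-valued, not merely scalar multiples of $\boldsymbol{\psi}_\alpha$. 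All correctors will be arranged to vanish on $\Gamma_R^\pm$, so that the difference ${\bf w}_\alpha^m:={\bf u}_{1\alpha}-\sum_{l=1}^{m}{\bf v}_\alpha^l$ retains the original boundary values on the inclusion boundaries, with a smooth cutoff used to localize near the exterior of the neck.

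Next I would verify by induction on $l$ pointwise bounds of the form $|\nabla^k{\bf v}_\alpha^l(x)|\le C|x_1|^{\sigma(l,k)}\delta(x_1)^{-\tau(l,k)}$ for exponents to be determined, together with the crucial residual improvement asserting that $|\mathcal{L}_{\lambda,\mu}\sum_{l=1}^{m}{\bf v}_\alpha^l|$ gains one additional power of $|x_1|/\delta(x_1)^{1/2}$ per step. Once this bookkeeping is in place, I would apply the local energy iteration of \cite{bll1} to ${\bf w}={\bf w}_\alpha^m$ on the concentric neck pieces $\Omega_r$, $r\in(0,R)$, producing an estimate of the shape
\[
\int_{\Omega_{\rho(x_1)}(x_1)}|\nabla{\bf w}|^2\,dx\le C\,\delta(x_1)^{m-1/2},
\]
where $\rho(x_1):=\delta(x_1)^{1/2}$ is the local length scale. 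After dilating by $\rho(x_1)$ the local strip becomes a fixed smooth domain, and interior Schauder estimates for the constant-coefficient Lam\'e system (used up to the regular portions of $\Gamma_R^\pm$) upgrade the $L^2$ gradient bound to the pointwise bound $|\nabla^m{\bf w}(x)|\le C/\delta(x_1)^{(m+1)/2}$. Since each $|\nabla^m{\bf v}_\alpha^l(x)|$ is controlled by $C/\delta(x_1)^{(m+1)/2}$ from the induction, the triangle inequality delivers the claimed estimate on $|\nabla^m{\bf u}_{1\alpha}|$.

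The main obstacle is the precise construction of the vector-valued correctors ${\bf v}_\alpha^l$ that simultaneously preserve the exact Dirichlet data on $\Gamma_R^\pm$ and improve the residual by one power of $|x_1|$ per step. The scalar perfect-conductivity analogue in \cite{LLYZ} has a one-component unknown, so the cancellation only involves $\Delta$; here the $(\lambda+\mu)\nabla(\nabla\cdot\,)$ term entangles the two components and forces a delicately matched ansatz whose precise polynomial structure depends on the Lam\'e parameters. Keeping track of the polynomial degrees of the correctors, ensuring that the boundary conditions on $\Gamma_R^\pm$ are preserved at every order, and verifying the residual improvement uniformly in $\varepsilon$ and $m$ constitute the technical heart of the argument.
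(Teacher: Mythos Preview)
Your overall architecture—explicit auxiliary functions matching the Dirichlet data, residual reduction, energy iteration, then local regularity after rescaling—is the paper's. But the quantitative bookkeeping is wrong in ways that break the argument, and the central construction is missing. The residual of your scalar first function ${\bf v}_\alpha^1=\bar u\,\boldsymbol\psi_\alpha$, $\bar u=(x_2+\varepsilon/2+h_2)/\delta$, is not of order $|x_1|^2/\delta^2$: the dominant term lives in the \emph{other} component, namely $(\mathcal L_{\lambda,\mu}{\bf v}_1^1)^{(2)}=(\lambda+\mu)\partial_{x_1x_2}\bar u\sim|x_1|/\delta^2\sim\delta^{-3/2}$. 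Your proposed ``gain of one power of $|x_1|/\delta^{1/2}$ per step'' is a bounded factor and gives no actual improvement; with only $|{\bf f}|\le C\delta^{-3/2}$ the energy/Schauder machinery of the Appendix (Proposition~\ref{propnew}) yields at best $|\nabla^m{\bf w}|\le C\delta^{1/2-m}$, which is worse than $\delta^{-(m+1)/2}$ as soon as $m\ge3$.

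The missing idea—which you correctly flag as the obstacle but do not supply—is to make already ${\bf v}_1^1$ genuinely vector-valued by solving the one-variable ODE $(\lambda+2\mu)\partial_{x_2x_2}({\bf v}_1^1)^{(2)}=-(\lambda+\mu)\partial_{x_1x_2}\bar u$ with zero data on $\Gamma^\pm$, via the explicit Green's function for $\partial_{x_2}^2$ on the vertical segment (see \eqref{def_v112}--\eqref{def_v112gx}). This cancels the $\delta^{-3/2}$ cross term exactly and leaves $|{\bf f}_1^1|\le C\delta^{-1}$. Higher correctors are built by the same Green's-function device: first $\mu\partial_{x_2}^2({\bf v}_1^l)^{(1)}=-({\bf f}_1^{l-1})^{(1)}$, then $(\lambda+2\mu)\partial_{x_2}^2({\bf v}_1^l)^{(2)}=-({\bf f}_1^{l-1})^{(2)}-(\lambda+\mu)\partial_{x_1x_2}({\bf v}_1^l)^{(1)}$, each step gaining one full power of $\delta$, so that $|{\bf f}_1^m|\le C\delta^{m-2}$ and $|\nabla^k{\bf f}_1^m|\le C\delta^{m-2-k}$. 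Proposition~\ref{propnew} then gives $|\nabla^m{\bf w}_\alpha^m|\le C$ (bounded, not $\delta^{-(m+1)/2}$), and the entire singularity $\delta^{-(m+1)/2}$ is carried by the explicit $\sum_l\nabla^m{\bf v}_\alpha^l$.
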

Denote the vertical distance between $D_{1}$ and $D_{2}$ by 
$$\delta(x_1):=\varepsilon+h_{1}(x_1)+h_{2}(x_1)\quad\mbox{for}~|x_{1}|\leq2R.$$

\begin{proof}[Proof of Theorem \ref{thm3.1}]
We will prove Theorem \ref{thm3.1} by induction for $m\geq1$. We only prove the case when
$\alpha=1$, since the case when $\alpha=2$ is the same.  

We will construct a sequence of explicit functions, ${\bf v}_{\alpha}^{l}(x)=(({\bf v}_{\alpha}^{l})^{(1)}(x),
({\bf v}_{\alpha}^{l})^{(2)}(x))^{T}$, which are polynomials with respect to $x_2$, to make $|\mathcal{L}_{\lambda,\mu}\sum_{l=1}^{m}{\bf v}_{\alpha}^{l}|$ as small as possible. Then by the energy iteration method presented in the Appendix, which is a variation one  developed in \cite{bll1,bll2}, we demonstrate that $\nabla^{m}\sum_{l=1}^{m}{\bf v}_{\alpha}^{l}$ can capture all singular terms in $\nabla^{m}{\bf u}_{1\alpha}$.

{\bf Step 1. $m=1$ (the gradient estimates).} 
We first construct an auxiliary function ${\bf v}_{1}^{1}(x)=\begin{pmatrix}
({\bf v}_{1}^{1})^{(1)}(x),
({\bf v}_{1}^{1})^{(2)}(x)
\end{pmatrix}^{T}$ satisfying the boundary condition ${\bf v}_{1}^{1}(x)={\bf u}_{11}(x)={\boldsymbol\psi}_{1}$ on $\Gamma^+_{2R}$,  ${\bf v}_{1}^{1}(x)={\bf u}_{11}(x)=0$ on $\Gamma^-_{2R}$. Precisely,
\begin{equation*}
({\bf v}_{1}^{1})^{(1)}(x)=\frac{x_2+\varepsilon/2+h_2(x_1)}{\delta(x_1)}\quad\text{for}~x\in\Omega_{2R}
\end{equation*}
and
\begin{equation}\label{def_v112}
	({\bf v}_{1}^{1})^{(2)}(x)=\frac{\lambda+\mu}{\lambda+2\mu}\int_{-\varepsilon/2-h_2(x_1)}^{\varepsilon/2+h_1(x_1)} G(y,x_2)\partial_{x_1x_2} ({\bf v}_{1}^{1})^{(1)}(x_1,y) dy\quad\text{for}~x\in\Omega_{2R},
\end{equation}
where 
\begin{align}\label{defgreenfunction}
	G(y, x_{2})=\frac{1}{\delta(x_1)}\begin{cases}
		(\frac{\varepsilon}{2}+h_1(x_1)-x_{2})(y+\frac{\varepsilon}{2}+h_2(x_1)), \quad -\frac{\varepsilon}{2}-h_2(x_1)\le y\le x_{2},\\
		(\frac{\varepsilon}{2}+h_1(x_1)-y)(x_{2}+\frac{\varepsilon}{2}+h_2(x_1)), \quad x_{2}\le y\le \frac{\varepsilon}{2}+h_1(x_1)
	\end{cases}
\end{align}
is the Green function. Clearly, 
\begin{equation}\label{def_v112gx}
	(\lambda+2\mu)\partial_{x_2x_2}({\bf v}_{1}^{1})^{(2)}=-(\lambda+\mu)\partial_{x_1x_2} ({\bf v}_{1}^{1})^{(1)} \quad\text{in}~\Omega_{2R}.
\end{equation}

A direct calculation yields, in $\Omega_{2R}$,
\begin{equation}\label{nabla_x2v111}
\begin{split}
&|\partial_{x_1} ({\bf v}_{1}^{1})^{(1)}|\leq C\delta(x_1)^{-1/2},\quad |\partial_{x_2} ({\bf v}_{1}^{1})^{(1)}|\leq C\delta(x_1)^{-1},\\
%\label{nabla_x2v111}
& |\partial_{x_1x_1} ({\bf v}_{1}^{1})^{(1)}|\leq C\delta(x_1)^{-1},\quad
 |\partial_{x_1x_2} ({\bf v}_{1}^{1})^{(1)}|\leq C\delta(x_1)^{-3/2},\quad \partial_{x_2x_2} ({\bf v}_{1}^{1})^{(1)}=0.
\end{split}
\end{equation}
For the $k$-th order derivative, $k\le m+1$, we also have
\begin{align}\label{kthdv111}
 |\partial_{x_1}^k ({\bf v}_{1}^{1})^{(1)}|\leq C\delta(x_1)^{-k/2},\quad
|\partial_{x_1}^{k-1}\partial_{x_2} ({\bf v}_{1}^{1})^{(1)}|\leq C\delta(x_1)^{-(k+1)/2}\quad\text{in}~\Omega_{2R}.
\end{align}

By \eqref{h1h14} and \eqref{defgreenfunction},  
\begin{equation}\label{estgreen}
\begin{split}
&|G(y,x_2)|\leq C\delta(x_1),\\
&|\partial_{x_{2}}G(y,x_2)|\leq C,\quad |\partial_{x_{1}}G(y,x_2)|\leq C\delta(x_1)^{1/2},\\
&|\partial_{x_{1}}^kG(y,x_2)|\leq C\delta(x_1)^{-(k-2)/2},\quad 1_{y\neq x_2}\cdot\partial_{x_{2}}^kG(y,x_2)=0\quad\mbox{for}~ 2\le k\le m+1.
\end{split}
\end{equation}
By using \eqref{def_v112}, \eqref{nabla_x2v111}, and \eqref{estgreen}, we have in $\Omega_{2R}$,
\begin{align}\label{nabla_x2v112}
	|({\bf v}_{1}^{1})^{(2)}|\leq C\delta(x_1)\Big|\max_{-\varepsilon/2-h_2(x_1)\leq y\leq \varepsilon/2+h_1(x_1)}G(y,x_2)\Big||\partial_{x_1x_2} ({\bf v}_{1}^{1})^{(1)}|\leq C\delta(x_1)^{1/2}.
\end{align}
Similarly, by \eqref{h1h14}, \eqref{def_v112}, \eqref{kthdv111}, and \eqref{estgreen}, 
\begin{equation}\label{nabla_x2v1121}
\begin{split}
&|\partial_{x_1} ({\bf v}_{1}^{1})^{(2)}|\leq C\delta(x_1)^2|\partial_{x_1}^2\partial_{x_2} ({\bf v}_{1}^{1})^{(1)}|+C\delta(x_1)^{3/2}|\partial_{x_1}\partial_{x_2} ({\bf v}_{1}^{1})^{(1)}|\leq C,\\
&|\partial_{x_2} ({\bf v}_{1}^{1})^{(2)}|\leq C\delta(x_1)|\partial_{x_1x_2} ({\bf v}_{1}^{1})^{(1)}|\leq C\delta(x_1)^{-1/2}
\end{split}\quad\text{in}~\Omega_{2R}.
\end{equation}
For the second-order derivative, by virtue of \eqref{def_v112},  \eqref{defgreenfunction}, \eqref{kthdv111}, and \eqref{estgreen}, 
\begin{equation}\label{2jdsfzhsv11}
	|\partial_{x_1x_1} ({\bf v}_{1}^{1})^{(2)}|\leq C\delta(x_1)^{-1/2},\quad 	|\partial_{x_1x_2} ({\bf v}_{1}^{1})^{(2)}|\leq C\delta(x_1)^{-1}\quad\text{in}~\Omega_{2R}.
\end{equation}
Similarly, for $k\le m+1$, 
\begin{align}\label{v112dgj2d1}
	|\partial_{x_1}^k ({\bf v}_{1}^{1})^{(2)}|\leq C\delta(x_1)^{-(k-1)/2},\quad
	|\partial_{x_1}^{k-1}\partial_{x_2}  ({\bf v}_{1}^{1})^{(2)}|\leq C\delta(x_1)^{-k/2}\quad\text{in}~\Omega_{2R}.
\end{align}
In addition, it follows from \eqref{def_v112gx} and \eqref{kthdv111} that
\begin{align}\label{v112dgj2d}
	|\partial_{x_1}^{k-2}\partial_{x_2}^2  ({\bf v}_{1}^{1})^{(2)}|\leq C|\partial_{x_1}^{k-1}\partial_{x_2} ({\bf v}_{1}^{1})^{(1)}|\leq C\delta(x_1)^{-(k+1)/2}\quad\text{in}~\Omega_{2R},
\end{align}
and, for $s\ge 3$, by \eqref{nabla_x2v111}, we have
\begin{align}\label{v112dgj2dk}
\partial_{x_2}^s  ({\bf v}_{1}^{1})^{(2)}=\frac{\lambda+\mu}{\lambda+2\mu}\partial_{x_1}\partial_{x_2}^{s-1}({\bf v}_{1}^{1})^{(1)}=0\quad\text{in}~\Omega_{2R}.
\end{align}

We denote 
$${\bf f}_{1}^{1}(x):=\mathcal{L}_{\lambda,\mu}{\bf v}_{1}^{1}(x).$$ Then, by using \eqref{def_v112gx} and \eqref{nabla_x2v111}, 
\begin{align*}
({\bf f}_{1}^{1})^{(1)}&=\big(\mathcal{L}_{\lambda,\mu}{\bf v}_{1}^{1}\big)^{(1)}  =\mu\Delta({\bf v}_{1}^{1})^{(1)}+(\lambda+\mu)\partial_{x_1}\big(\nabla\cdot{\bf v}_{1}^{1}\big) \\
&=(\lambda+2\mu)\partial_{x_1x_1}({\bf v}_{1}^{1})^{(1)}+(\lambda+
\mu)\partial_{x_1x_2}({\bf v}_{1}^{1})^{(2)},\\
({\bf f}_{1}^{1})^{(2)}&=\big(\mathcal{L}_{\lambda,\mu}{\bf v}_{1}^{1}\big)^{(2)}=\mu\Delta({\bf v}_{1}^{1})^{(2)}+(\lambda+\mu)\partial_{x_2}\big(\nabla\cdot{\bf v}_{1}^{1} \big) \\
&=(\lambda+2\mu)\partial_{x_2x_2}({\bf v}_{1}^{1})^{(2)}+(\lambda+\mu)
\partial_{x_1x_2}({\bf v}_{1}^{1})^{(1)}+\mu\partial_{x_1x_1}({\bf v}_{1}^{1})^{(2)}\\
&=\mu\partial_{x_1x_1}({\bf v}_{1}^{1})^{(2)}.
\end{align*}
Thus, it follows from \eqref{nabla_x2v111} and \eqref{2jdsfzhsv11} that
\begin{align*}%\label{f1211}
	|({\bf f}_{1}^{1})^{(1)}(x)|\leq C\delta(x_1)^{-1},\quad |({\bf f}_{1}^{1})^{(2)}(x)|\leq  C\delta(x_1)^{-1/2}\quad\text{in}~\Omega_{2R}.
\end{align*}
Hence,
\begin{align}\label{m=1}
	|{\bf f}_{1}^{1}(x)| \leq|({\bf f}_{1}^{1})^{(1)} (x)| +|({\bf f}_{1}^{1})^{(2)}(x)|\leq C\delta(x_1)^{-1}\quad\text{in}~\Omega_{2R}.
\end{align}
 Moreover, for $1\le k\le m-1$, by \eqref{kthdv111}, \eqref{v112dgj2d1}, and \eqref{v112dgj2d}, we have
\begin{align*}%\label{f111k}
	|\partial_{x_1}^k({\bf f}_{1}^{1})^{(1)}|\leq C\delta(x_1)^{-(k+2)/2},\quad |\partial_{x_1}^{k-1}\partial_{x_2}({\bf f}_{1}^{1})^{(1)}|\leq C\delta(x_1)^{-(k+3)/2}\quad\text{in}~\Omega_{2R}.
\end{align*}

By applying Proposition \ref{propnew} and using estimate \eqref{m=1}, we obtain
\begin{align*}%\label{est_w1}
\|\nabla\big({\bf u}_{11}-{\bf v}_{1}^{1}\big)\|_{L^\infty(\Omega_{\delta(x_1)/2}(x_1))}\leq C.
\end{align*}
Thus, Theorem \ref{thm3.1} holds true for $m=1$,
\begin{equation*}%\label{est_w1_2}
|\nabla {\bf u}_{11}|\leq |\nabla {\bf v}_{1}^{1}|+C\leq C\delta(x_1)^{-1}\quad\text{in}~\Omega_{2R}.
\end{equation*}

Here, we emphasize the crucial role of the auxiliary function \(({\bf v}_{1}^{1})^{(2)}\). Define \(\tilde{\bf v}_1^1(x) = ({\bf v}_{1}^{1})^{(1)}(x){\boldsymbol\psi}_{1}\). Notably, the second derivative term \(\partial_{x_2x_2}({\bf v}_{1}^{1})^{(2)}\) cancels out the leading term in \(\mathcal{L}_{\lambda, \mu} \tilde{\bf v}_1^1(x)\), rendering \( |\mathcal{L}_{\lambda, \mu} {\bf v}_{1}^{1}(x)| \) less singular than \( |\mathcal{L}_{\lambda, \mu} \tilde{\bf v}_1^1(x)| \). This cancellation enables us to capture all singular terms in \(\nabla {\bf u}_{11}\) up to \(O(1)\), marking a significant improvement over the previous result in \cite[Proposition 3.2]{bll1}, which established that \( |\partial_{x_2}({\bf u}_{11} - \tilde{\bf v}_1^1)(x)| \leq \delta(x_1)^{-1/2} \). It also confirms that \(\partial_{x_2} ({\bf v}_{1}^{1})^{(2)}\) has a singularity of order \(\delta(x_1)^{-1/2}\). This demonstrates the necessity of constructing both \(({\bf v}_{1}^{1})^{(1)}\) and \(({\bf v}_{1}^{1})^{(2)}\) simultaneously. In the following sections, we will extend this approach inductively to construct additional auxiliary functions, allowing us to identify all singular terms in \(\nabla^m {\bf u}_{11}(x)\) up to \(O(1)\).

{\bf Step 2. $m\geq2$ (the higher derivatives estimates).} We denote 
$$
{\bf f}_1^{l}(x):=\sum_{j=1}^{l}\mathcal{L}_{\lambda,\mu}{\bf v}_{1}^{j}(x)= {\bf f}_1^{l-1}(x)+\mathcal{L}_{\lambda,\mu}{\bf v}_{1}^{l}(x)\quad\quad\mbox{for}~2\leq\,l\leq m.
$$ 
We choose ${\bf v}_{1}^{l}(x)=\begin{pmatrix}
({\bf v}_{1}^{l})^{(1)}(x),
({\bf v}_{1}^{l})^{(2)}(x)
\end{pmatrix}^{T}$ satisfying ${\bf v}_{1}^{l}(x)=0$ on $\Gamma^+_{2R}\cup\Gamma^-_{2R}$, and in $\Omega_{2R}$,
\begin{align}
({\bf v}_{1}^{l})^{(1)}(x)&=\frac{1}{\mu}\int_{-\varepsilon/2-h_2(x_1)}^{\varepsilon/2+h_1(x_1)} G(y,x_2) ({\bf f}_{1}^{l-1})^{(1)}(x_1,y) dy,
\label{def_v1l1}\\
({\bf v}_{1}^{l})^{(2)}(x)&=\frac{1}{\lambda+2\mu}\int_{-\varepsilon/2-h_2(x_1)}^{\varepsilon/2+h_1(x_1)} G(y,x_2)\Big( ({\bf f}_{1}^{l-1})^{(2)}+(\lambda+\mu)\partial_{x_1x_2}({\bf v}_{1}^{l})^{(1)}\Big)(x_1,y) dy.\label{def_v1l2}
\end{align}

It is easy to verify that 
$$\sum_{l=1}^{m}{\bf v}_{1}^{1}(x)={\bf u}_{11}(x)={\boldsymbol\psi}_{1}\quad\mbox{ on}~ \Gamma^+_{2R},\quad\sum_{l=1}^{m}{\bf v}_{1}^{1}(x)={\bf u}_{11}(x)=0\quad\mbox{ on}~\Gamma^-_{2R},$$ and
\begin{align}\label{fzl1}
\begin{split}
	\mu\partial_{x_2x_2}({\bf v}_{1}^{l})^{(1)}&=-({\bf f}_{1}^{l-1})^{(1)},\\
	(\lambda+2\mu)\partial_{x_2x_2}({\bf v}_{1}^{l})^{(2)}&=-({\bf f}_1^{l-1})^{(2)}-(\lambda+\mu)\partial_{x_1x_2}({\bf v}_{1}^{l})^{(1)}
\end{split}\quad\text{in}~\Omega_{2R}.
\end{align}
Thus, we have
\begin{align}\label{fqcx12d}
	({\bf f}_1^l)^{(1)}&=({\bf f}_1^{l-1})^{(1)}+\mu\partial_{x_2x_2}({\bf v}_{1}^{l})^{(1)}+(\lambda+2\mu)\partial_{x_{1}x_1}({\bf v}_1^1)^{(1)}+(\lambda+\mu)\partial_{x_1x_2}({\bf v}_{1}^{l})^{(2)}\nonumber\\
	&=(\lambda+2\mu)\partial_{x_{1}x_1}({\bf v}_1^l)^{(1)}+(\lambda+\mu)\partial_{x_1x_2}({\bf v}_{1}^{l})^{(2)},\\
	({\bf f}_1^l)^{(2)}&=({\bf f}_1^{l-1})^{(2)}+(\lambda+\mu)\partial_{x_1x_2}({\bf v}_{1}^{l})^{(1)}+(\lambda+2\mu)\partial_{x_2x_2}({\bf v}_{1}^{l})^{(2)}+\mu\partial_{x_1x_1}({\bf v}_{1}^{l})^{(2)}\nonumber\\
	&=\mu\partial_{x_1x_1}({\bf v}_{1}^{l})^{(2)}.\label{fqcx22d}
\end{align}
We will inductively prove the following estimates: for $j\geq1$, $j=1,2,\dots,m$,
\begin{equation}\label{xydgj2d1}
|({\bf v}_{1}^{j})^{(1)}(x) | \le C\delta(x_1)^{j-1},\quad|({\bf v}_{1}^{j})^{(2)}(x) | \le C\delta(x_1)^{(2j-1)/2}\quad\text{in}~\Omega_{2R},
\end{equation}
and for $ k\ge 0$, 
\begin{align}\label{xydgj2d2}
\begin{split}
&|\partial_{x_1}^k\partial_{x_2}^s({\bf v}_{1}^{j})^{(1)}(x) | \le C\delta(x_1)^{\frac{2j-2s-k-2}{2}}~\text{for}~0\le s\le 2j-1,\\
&\partial_{x_2}^{s}({\bf v}_{1}^{j})^{(1)}(x)=0~\text{for}~s\ge 2j,\\
&|\partial_{x_1}^k\partial_{x_2}^s({\bf v}_{1}^{j})^{(2)}(x) | \le C\delta(x_1)^{\frac{2j-2s-k-1}{2}}~\text{for}~0\le s\le 2j,\\
&\partial_{x_2}^{s}({\bf v}_{1}^{j})^{(2)}(x)=0~\text{for}~s\ge 2j+1,
\end{split}\quad\text{in}~\Omega_{2R}.
\end{align}

Indeed, from \eqref{nabla_x2v111}, \eqref{kthdv111}, and \eqref{nabla_x2v112}--\eqref{v112dgj2dk}, we have \eqref{xydgj2d1} and \eqref{xydgj2d2} for $j=1$. Assuming that \eqref{xydgj2d1} and \eqref{xydgj2d2} hold for $j=l-1$ with $l\ge 2$, that is
\begin{equation*}%\label{xydgj2d1js}
	|\big({\bf v}_{1}^{(l-1)}\big)^{(1)}(x) | \le C\delta(x_1)^{l-2},\quad|\big({\bf v}_{1}^{(l-1)}\big)^{(2)}(x) | \le C\delta(x_1)^{(2l-3)/2},
\end{equation*}
and for $ k\ge 0$, 
\begin{align}\label{xydgj2d2js}
\begin{split}
&|\partial_{x_1}^k\partial_{x_2}^s\big({\bf v}_{1}^{(l-1)}\big)^{(1)}(x) | \le C\delta(x_1)^{\frac{2l-2s-k-4}{2}}~\text{for}~0\le s\le 2l-3,\\
&\partial_{x_2}^{s}({\bf v}_{1}^{(l-1)})^{(2)}(x)=0~\text{for}~s\ge 2l-2,\\
&|\partial_{x_1}^k\partial_{x_2}^s\big({\bf v}_{1}^{(l-1)}\big)^{(2)}(x) | \le C\delta(x_1)^{(2l-2s-k-3)/2}~\text{for}~0\le s\le 2l-2,\\
&\partial_{x_2}^{s}({\bf v}_{1}^{(l-1)})^{(2)}(x)=0~\text{for}~s\ge 2l-1.
\end{split}
\end{align}
Then, by \eqref{fqcx12d}, \eqref{fqcx22d}, and \eqref{xydgj2d2js}, 
\begin{equation}\label{xydgjf}
\begin{split}
&| ({\bf f}_{1}^{l-1})^{(1)}(x)|\leq C|\partial_{x_1x_1}({\bf v}_{1}^{(l-1)})^{(1)}|+C|\partial_{x_1x_2}({\bf v}_{1}^{(l-1)})^{(2)}|\leq C\delta(x_1)^{l-3},\\
&| ({\bf f}_{1}^{l-1})^{(2)}(x)|\leq C|\partial_{x_1x_1}({\bf v}_{1}^{(l-1)})^{(2)}|\leq C\delta(x_1)^{(2l-5)/2}
\end{split}\quad\text{in}~\Omega_{2R}.
\end{equation}
For the high-order derivatives, we have, for $k\ge 0$,
\begin{equation}\label{xudgjf2}
\begin{split}
&|\partial_{x_1}^k\partial_{x_2}^s({\bf f}_{1}^{l-1})^{(1)}(x)|\leq C\delta(x_1)^{(2l-2s-k-6)/2}~\text{for}~0\le s\le 2l-3,\\
&\partial_{x_2}^s({\bf f}_{1}^{l-1})^{(1)}(x)=0~\text{for}~s\ge 2l-2,\\
& |\partial_{x_1}^k\partial_{x_2}^s({\bf f}_{1}^{l-1})^{(2)}(x)|\le C\delta(x_1)^{(2l-2s-k-5)/2}~\text{for}~0\le s\le 2l-2,\\
&\partial_{x_2}^s({\bf f}_{1}^{l-1})^{(2)}(x)=0~\text{for}~s\ge 2l-1.
\end{split}
\end{equation}
Thus, by using \eqref{estgreen}, \eqref{def_v1l1}, \eqref{xydgjf}, and \eqref{xudgjf2}, we have, in $\Omega_{2R}$,
\begin{equation*}
	|({\bf v}_{1}^{l})^{(1)}(x) | \le C\delta(x_1)^{2}|({\bf f}_{1}^{l-1})^{(1)}(x)|\leq C\delta(x_1)^{l-1},
\end{equation*}
and for $k\ge 0$, $2\le s\le 2l-1$, 
\begin{equation}\label{need1}
\begin{split}
&|\partial_{x_1}^k\partial_{x_2}({\bf v}_{1}^{l})^{(1)}(x) | \le C\delta(x_1)|\partial_{x_1}^{k}\partial_{x_2}(G(y,x_2){\bf f}_{1}^{l-1})^{(1)}(x)|\leq C\delta(x_1)^{(2l-k-4)/2},\\
&|\partial_{x_1}^k\partial_{x_2}^s({\bf v}_{1}^{l})^{(1)}(x) | \le C|\partial_{x_1}^k\partial_{x_2}^{s-2}({\bf f}_{1}^{l-1})^{(1)}(x)|\leq C\delta(x_1)^{(2l-2s-k-2)/2},
\end{split}
\end{equation}
while, for $s\ge 2l$, by \eqref{fzl1}, and \eqref{xudgjf2},
\begin{equation*}
\partial_{x_2}^s({\bf v}_{1}^{l})^{(1)}(x) =- \frac{1}{\mu}\partial_{x_2}^{s-2}({\bf f}_{1}^{l-1})^{(1)}(x)=0.
\end{equation*}
Similarly, by using \eqref{estgreen}, \eqref{def_v1l2} and \eqref{xydgjf}--\eqref{need1}, we derive, in $\Omega_{2R}$,
\begin{equation*}
|({\bf v}_{1}^{l})^{(2)}(x) | \leq C\delta(x_1)^{2}|({\bf f}_{1}^{l-1})^{(2)}|+C\delta(x_1)^{2}|\partial_{x_1x_2}({\bf v}_{1}^{l})^{(1)}|\leq C\delta(x_1)^{(2l-1)/2},
\end{equation*}
and for $k\ge 0$, 
\begin{equation*}
|\partial_{x_1}^k\partial_{x_2}^s({\bf v}_{1}^{l})^{(2)}(x) | \le C\delta(x_1)^{(2l-2s-k-1)/2}~\text{for}~0\le s\le 2l,~\partial_{x_2}^s({\bf v}_{1}^{l})^{(2)}(x)=0~\text{for}~s\ge 2l+1.
\end{equation*}
Thus, \eqref{xydgj2d1} and \eqref{xydgj2d2} hold for $j\ge 2$.
 
Consequently, 
\begin{equation*}
\begin{split}
&|({\bf f}_1^m)^{(1)}(x)|\leq C|\partial_{x_1x_1}({\bf v}_1^{m})^{(1)}|+C|\partial_{x_1x_2}({\bf v}_1^{m})^{(2)}|\leq C\delta(x_1)^{m-2},\\
&|({\bf f}_1^m)^{(2)}(x)|\leq C|\partial_{x_1x_1}({\bf v}_1^{m})^{(2)}|\leq C\delta(x_1)^{m-3/2}
\end{split}\quad\text{in}~\Omega_{2R}.
\end{equation*}
Thus, 
\begin{equation*}%\label{f1l12d}
	|{\bf f}_{1}^{m}(x)|\leq C\delta(x_1)^{m-2}\quad\text{in}~\Omega_{2R},
\end{equation*}
and similarly, for $1\le s\le m-1$,
\begin{equation*}%\label{f1l12d2}
|\nabla^s{\bf f}_{1}^{m}(x)|\leq C\delta(x_1)^{m-2-s}\quad\text{in}~\Omega_{2R}.
\end{equation*}
By using Proposition \ref{propnew}, we obtain, for $x\in\Omega_{R}$,
\begin{equation}\label{zx2dv1l}
\Big\|\nabla^{m}\Big({\bf u}_{11}-\sum_{l=1}^{m}{\bf v}_{1}^{l}\Big)\Big\|_{L^\infty(\Omega_{\delta(x_1)/2}(x_1))}\leq C.
\end{equation}

By virtue of  \eqref{xydgj2d2}, we have
\begin{equation}\label{zx2dv1l1}
|\nabla^m{\bf v}_{1}^{l}(x)|\leq C\delta(x_1)^{-\frac{m+1}{2}}~\text{for}~l\le \frac m2,\quad|\nabla^m{\bf v}_{1}^{l}(x)|\leq C\delta(x_1)^{l-m-1}~\text{for}~m\ge l\ge\frac{m+1}{2}.
\end{equation}
Hence, it follows from \eqref{zx2dv1l} and \eqref{zx2dv1l1} that
\begin{equation*}
|\nabla^{m}{\bf u}_{11}(x) | \leq \Big|\nabla^{m}\sum_{l=1}^{m}{\bf v}_{1}^{l}(x)\Big| +C\leq \sum_{l=1}^{m}\Big|\nabla^{m}{\bf v}_{1}^{l}(x)\Big| +C\leq C\delta(x_1)^{-\frac{m+1}{2}}\quad\mbox{for}~x\in \Omega_R.
\end{equation*}
The proof of Theorem \ref{thm3.1} is completed.
\end{proof}

\subsection{Estimates of $\nabla^{m}{\bf u}_{13}$}
Using the same process, we obtain the estimate of $\nabla^{m}{\bf u}_{13}$ as follows.

\begin{theorem}\label{thm3.3}
Under the same assumption as in Theorem \ref{mainth}, let ${\bf u}_{13}$ be the solution to \eqref{equ_v1} for $\alpha=3$. Then for sufficiently small $0<\varepsilon<1/2$ and for $m\ge 1$, we have 
\begin{align*}%\label{tl2d2}
|\nabla^{m}{\bf u}_{13}(x) | \le C\delta(x_1)^{-m/2}\quad\text{for}~x\in\Omega_{R}.
\end{align*}
\end{theorem}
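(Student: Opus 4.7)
The plan is to mirror the proof of Theorem \ref{thm3.1} almost verbatim, substituting the translational data $\boldsymbol\psi_1$ by the rigid-rotation data $\boldsymbol\psi_3=(x_2,-x_1)^\top$. The half-order improvement from $\delta^{-(m+1)/2}$ to $\delta^{-m/2}$ reflects the fact that the trace of $\boldsymbol\psi_3$ on $\Gamma^+$ equals $(\tfrac{\varepsilon}{2}+h_1(x_1),-x_1)$, whose first entry is $O(\delta(x_1))$ and whose second is $O(\sqrt{\delta(x_1)})$ via $|x_1|\le C\sqrt{\delta(x_1)}$ (a consequence of \eqref{h1h1}); this propagates to a uniform $\sqrt{\delta}$-improvement in the size of every auxiliary function ${\bf v}_3^l$ compared to its counterpart ${\bf v}_1^l$ and accounts for the rate.

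For the base case $m=1$ I would set
\[
({\bf v}_3^1)^{(1)}(x) = \bigl(\tfrac{\varepsilon}{2}+h_1(x_1)\bigr)\frac{x_2+\tfrac{\varepsilon}{2}+h_2(x_1)}{\delta(x_1)},
\]
which enforces $({\bf v}_3^1)^{(1)}|_{\Gamma^+}=x_2|_{\Gamma^+}$ and vanishes on $\Gamma^-$, together with
\[
({\bf v}_3^1)^{(2)}(x) = -x_1\frac{x_2+\tfrac{\varepsilon}{2}+h_2(x_1)}{\delta(x_1)} + \frac{\lambda+\mu}{\lambda+2\mu}\int_{-\tfrac{\varepsilon}{2}-h_2(x_1)}^{\tfrac{\varepsilon}{2}+h_1(x_1)} G(y,x_2)\,\partial_{x_1x_2}({\bf v}_3^1)^{(1)}(x_1,y)\,dy,
\]
whose linear part realizes the second boundary component and whose Green-function term enforces the cancellation \eqref{def_v112gx}. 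Using $|h_i'(x_1)|,|x_1|\le C\sqrt{\delta(x_1)}$ together with \eqref{estgreen} gives $|({\bf v}_3^1)^{(1)}|\le C\delta$, $|({\bf v}_3^1)^{(2)}|\le C\sqrt{\delta}$, the crucial bound $|\nabla{\bf v}_3^1|\le C\delta^{-1/2}$, and $|\mathcal{L}_{\lambda,\mu}{\bf v}_3^1|\le C\delta^{-1}$; Proposition \ref{propnew} then yields $|\nabla({\bf u}_{13}-{\bf v}_3^1)|\le C$ on $\Omega_{\delta(x_1)/2}(x_1)$ and hence $|\nabla{\bf u}_{13}|\le C\delta^{-1/2}$.

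For $l\ge 2$ I would define ${\bf v}_3^l$ by the identical Green-function recursion \eqref{def_v1l1}--\eqref{def_v1l2} with ${\bf f}_3^{l-1}=\sum_{j=1}^{l-1}\mathcal{L}_{\lambda,\mu}{\bf v}_3^j$ replacing ${\bf f}_1^{l-1}$, and prove by induction the half-order-shifted analogues of \eqref{xydgj2d1}--\eqref{xydgj2d2}, namely $|({\bf v}_3^j)^{(1)}|\le C\delta^{(2j-1)/2}$, $|({\bf v}_3^j)^{(2)}|\le C\delta^{j-1}$, and $|\partial_{x_1}^k\partial_{x_2}^s({\bf v}_3^j)^{(1)}|\le C\delta^{(2j-2s-k-1)/2}$ on its admissible range (with analogous bounds for the second component and the same termination $\partial_{x_2}^s{\bf v}_3^j\equiv 0$ beyond the polynomial degree). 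Propagating these through \eqref{fqcx12d}--\eqref{fqcx22d} yields $|\nabla^s{\bf f}_3^m|\le C\delta^{m-3/2-s}$ and $|\nabla^m{\bf v}_3^l|\le C\delta^{-m/2}$ for the dominant $l$, so Proposition \ref{propnew} produces $\|\nabla^m({\bf u}_{13}-\sum_{l=1}^m{\bf v}_3^l)\|_{L^\infty(\Omega_{\delta/2}(x_1))}\le C$, and the triangle inequality closes the argument. The chief technical hurdle is the careful bookkeeping of this half-order shift throughout the induction---in particular, ensuring that $x_1$-derivatives of the seed factors $(\tfrac{\varepsilon}{2}+h_1)$ and $-x_1$ are consistently estimated via $|h_i'|,|x_1|\le C\sqrt{\delta}$ rather than by direct power counting, and that the cancellation structure \eqref{def_v112gx} is re-established at every iteration so that $(\mathcal{L}_{\lambda,\mu}{\bf v}_3^l)^{(2)}$ collapses to the less singular $\mu\partial_{x_1x_1}({\bf v}_3^l)^{(2)}$ of \eqref{fqcx22d}.
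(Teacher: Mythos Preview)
Your base case $m=1$ is correct, and your seed ${\bf v}_3^1$ (linear in $x_2$ in the first slot, linear part plus Green correction in the second) is a legitimate alternative to the paper's choice ${\bf v}_3^1=k\boldsymbol{\psi}_3$. However, the inductive step for $m\ge 2$ has a genuine gap: the ``half-order-shifted'' hypothesis you write down does not close. You claim $|({\bf v}_3^j)^{(1)}|\le C\delta^{(2j-1)/2}$ and $|({\bf v}_3^j)^{(2)}|\le C\delta^{j-1}$, which amounts to \emph{swapping} the roles of the two components relative to \eqref{xydgj2d1}. But the second bound is a half-order \emph{degradation}, not an improvement, and it feeds back destructively through $({\bf f}_3^{l-1})^{(1)}=(\lambda+2\mu)\partial_{x_1x_1}({\bf v}_3^{l-1})^{(1)}+(\lambda+\mu)\partial_{x_1x_2}({\bf v}_3^{l-1})^{(2)}$: with your hypothesis one gets $|\partial_{x_1x_2}({\bf v}_3^{l-1})^{(2)}|\le C\delta^{(2l-7)/2}$, hence $|({\bf v}_3^l)^{(1)}|\le C\delta^{(2l-3)/2}$, strictly worse than the claimed $\delta^{(2l-1)/2}$. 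The induction therefore unravels after one step. The underlying reason is that your seed's second component contains the term $-x_1 k$, which is only $O(\sqrt{\delta})$---the \emph{same} order as $({\bf v}_1^1)^{(2)}$ in Theorem \ref{thm3.1}---so no uniform half-order gain in component $(2)$ is available to propagate.

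The paper handles this differently: it takes the naive seed ${\bf v}_3^1=k\boldsymbol{\psi}_3$ (so $({\bf v}_3^1)^{(1)}$ is quadratic in $x_2$ and $\partial_{x_2x_2}({\bf v}_3^1)^{(1)}=2/\delta\neq 0$), and then builds a \emph{nonstandard} ${\bf v}_3^2$ whose first slot cancels only the degree-zero-in-$x_2$ piece $\mu\partial_{x_2x_2}({\bf v}_3^1)^{(1)}+(\lambda+\mu)\partial_{x_1x_2}({\bf v}_3^1)^{(2)}$ of $({\bf f}_3^1)^{(1)}$, rather than all of $({\bf f}_3^1)^{(1)}$ as in \eqref{def_v1l1}. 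This lowers the $x_2$-degree of every $({\bf v}_3^l)^{(1)}$, $l\ge 2$, by exactly one compared with Theorem \ref{thm3.1} (degree $2l-2$ instead of $2l-1$), and it is this degree drop---not a shift in the coefficient exponents---that yields $|\nabla^m{\bf v}_3^l|\le C\delta^{-m/2}$. If you want to salvage your seed, you would need a two-scale bookkeeping that separately tracks the large low-degree and small high-degree parts of each ${\bf v}_3^l$; the single ``half-order-shifted'' exponent you propose is not enough.
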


\begin{proof}[Proof of Theorem \ref{thm3.3}]
	Denote
	\begin{equation*}
		{\bf v}_{3}^{1}(x):=\frac{x_2+\varepsilon/2+h_2(x_1)}{\delta(x_1)}{\boldsymbol\psi}_{3}=\frac{x_2+\varepsilon/2+h_2(x_1)}{\delta(x_1)}\begin{pmatrix}
			x_{2}\\
			-x_{1}
		\end{pmatrix} ~\text{for}~x\in\Omega_{2R}.
	\end{equation*}
A simple calculation gives 
\begin{align*}
|\partial_{x_1}({\bf v}_{3}^{1})^{(1)}|\leq&\, C\delta(x_1)^{1/2},~\quad\quad |\partial_{x_2}({\bf v}_{3}^{1})^{(1)}|\leq C, \\
|\partial_{x_1}({\bf v}_{3}^{1})^{(2)}|\leq&\, C,\quad\quad\quad\quad\quad\quad |\partial_{x_2}({\bf v}_{3}^{1})^{(2)}|\leq C\delta(x_1)^{-1/2},
\end{align*}
and
\begin{align}\label{v31dds}
\begin{split}
&|\partial_{x_1x_1}({\bf v}_{3}^{1})^{(1)}|\leq C,\quad
|\partial_{x_1x_2}({\bf v}_{3}^{1})^{(1)}|\leq C\delta(x_1)^{-1/2}, \quad|\partial_{x_2x_2}
({\bf v}_{3}^{1})^{(1)}|\leq  C\delta(x_1)^{-1}, \\ 
&|\partial_{x_1x_1}({\bf v}_{3}^{1})^{(2)}|\leq C\delta(x_1)^{-1/2}, \quad|\partial_{x_1x_2}({\bf v}_{3}^{1})^{(2)}|\leq C\delta(x_1)^{-1}, \quad\partial_{x_2x_2}({\bf v}_{3}^{1})^{(2)}=0.
\end{split}
\end{align}
In addition, for the high-order derivatives, we have, for $k\ge 2$ and $0\le s\le 2$, in $\Omega_{2R}$
\begin{equation*}
\begin{split}
&|\partial_{x_1}^k\partial_{x_{2}}^s({\bf v}_{3}^{1})^{(1)}|\leq C\delta(x_1)^{(2-2s-k)/2},\quad \partial_{x_{2}}^3({\bf v}_{3}^{1})^{(1)}=0,\\
&|\partial_{x_1}^k({\bf v}_{3}^{1})^{(2)}|\leq C\delta(x_1)^{-(k-1)/2},\quad\quad\quad|\partial_{x_1}^k\partial_{x_{2}}({\bf v}_{3}^{1})^{(2)}|\leq C\delta(x_1)^{-(k+1)/2}.
\end{split}
\end{equation*}

Denote 
$${\bf f}_{3}^{1}(x):=\mathcal{L}_{\lambda,\mu}{\bf v}_{3}^{1}(x).$$ We have
\begin{equation*}
\begin{split}
&({\bf f}_{3}^{1})^{(1)}=(\lambda+2\mu)\partial_{x_1x_1}({\bf v}_{3}^{1})^{(1)}+\mu\partial_{x_2x_2}({\bf v}_{3}^{1})^{(1)}+(\lambda+\mu)\partial_{x_1x_2}({\bf v}_{3}^{1})^{(2)},\\
&({\bf f}_{3}^{1})^{(2)}=\mu\partial_{x_1x_1}({\bf v}_{3}^{1})^{(2)}+(\lambda+\mu)\partial_{x_1x_2}({\bf v}_{3}^{1})^{(1)}.
\end{split}
\end{equation*}
By \eqref{v31dds}, it is easy to check that the leading term in $({\bf f}_{3}^{1})^{(1)}$ is $\mu\partial_{x_2x_2}({\bf v}_{3}^{1})^{(1)}+(\lambda+\mu)\partial_{x_1x_2}({\bf v}_{3}^{1})^{(2)}$, being of the order $\delta(x_1)^{-1}$, while both terms in $({\bf f}_{3}^{1})^{(2)}$ are of the order $\delta(x_1)^{-1/2}$. Thus,
\begin{align*}
|{\bf f}_{3}^{1}(x)|=|\mathcal{L}_{\lambda,\mu}{\bf v}_{3}^{1}(x)|\leq \big|({\bf f}_{3}^{1})^{(1)}\big|+\big|({\bf f}_{3}^{1})^{(2)}\big|\le C\delta(x_1)^{-1}\quad\text{in}~\Omega_{2R}.
\end{align*}
By applying Proposition \ref{propnew}, it is not difficult to see that $\nabla{\bf v}_{3}^{1}$ captures all singular terms in $\nabla {\bf u}_{13}$. Consequently,
\begin{equation*}
|\nabla {\bf u}_{13}(x)|\leq C|\nabla{\bf v}_{3}^{1}(x)|+C\le C\delta(x_1)^{-1/2},\quad x\in \Omega_{R}.
\end{equation*}

We denote 
$${\bf f}_3^{l}(x):=\sum_{j=1}^{l}\mathcal{L}_{\lambda,\mu}{\bf v}_{3}^{j}(x)= {\bf f}_3^{l-1}(x)+\mathcal{L}_{\lambda,\mu}{\bf v}_{3}^{l}(x).$$
For $l=2$, we choose ${\bf v}_{3}^{2}(x)=\begin{pmatrix}
	({\bf v}_{3}^{2})^{(1)}(x),
	({\bf v}_{3}^{2})^{(2)}(x)
\end{pmatrix}^{T}$ satisfying ${\bf v}_{3}^{2}(x)=0$ on $\Gamma^+_{2R}\cup\Gamma^-_{2R}$ and in $\Omega_{2R}$,
\begin{align*}
	({\bf v}_{3}^{2})^{(1)}(x)&=\frac{1}{\mu}\int_{-\varepsilon/2-h_2(x_1)}^{\varepsilon/2+h_1(x_1)} G(y,x_2) \Big(\mu\partial_{x_2x_2}({\bf v}_{3}^{1})^{(1)}+(\lambda+\mu)\partial_{x_1x_2}({\bf v}_{3}^{1})^{(2)}\Big)(x_1,y) dy,\\
({\bf v}_{3}^{2})^{(2)}(x)&=\frac{1}{\lambda+2\mu}\int_{-\varepsilon/2-h_2(x_1)}^{\varepsilon/2+h_1(x_1)} G(y,x_2) \Big(({\bf f}_{3}^{1})^{(2)}+(\lambda+\mu)\partial_{x_{1}x_2}({\bf v}_{3}^{2})^{(1)}\Big)(x_1,y) dy,
\end{align*}
so that
\begin{align}\label{fzl1-}
	\begin{split}
		\mu\partial_{x_2x_2}({\bf v}_{3}^{2})^{(1)}&=-\mu\partial_{x_2x_2}({\bf v}_{3}^{1})^{(1)}-(\lambda+\mu)\partial_{x_1x_2}({\bf v}_{3}^{1})^{(2)},\\
		(\lambda+2\mu)\partial_{x_2x_2}({\bf v}_{3}^{2})^{(2)}&=-({\bf f}_3^{1})^{(2)}-(\lambda+\mu)\partial_{x_1x_2}({\bf v}_{3}^{2})^{(1)}
	\end{split}\quad\text{in}~\Omega_{2R}.
\end{align}
The purpose of constructing \({\bf v}_3^2\) is to utilize \(\mu \partial_{x_2x_2} ({\bf v}_3^2)^{(1)}\) to cancel the leading terms of \(({\bf f}_3^1)^{(1)}\), and to apply \((\lambda + 2\mu) \partial_{x_2x_2} ({\bf v}_3^2)^{(2)}\) to eliminate \(({\bf f}_3^1)^{(2)} + (\lambda + \mu) \partial_{x_1x_2} ({\bf v}_3^2)^{(1)}\), which is of order \(\delta(x_1)^{-1/2}\). As a result, the magnitude of \({\bf f}_3^2\) becomes smaller than that of \({\bf f}_3^1\). Indeed, a simple calculation yields 
\begin{align*}
	|({\bf v}_{3}^{2})^{(1)}|\leq C\delta(x_1),\quad 	|({\bf v}_{3}^{2})^{(2)}|\leq C\delta(x_1)^{3/2},
\end{align*}
and for $k\ge 0$, in $\Omega_{2R}$,
\begin{align}\label{gs}
	\begin{split}
	&|\partial_{x_1}^k\partial_{x_2}^s({\bf v}_{3}^{2})^{(1)}(x) | \le C\delta(x_1)^{\frac{2-2s-k}{2}}~\text{for}~0\le s\le 2,\\
	&\partial_{x_2}^s({\bf v}_{3}^{2})^{(1)}(x)=0~\text{for}~ s\ge 3,\\
	&|\partial_{x_1}^k\partial_{x_2}^s({\bf v}_{3}^{2})^{(2)}(x) | \le C\delta(x_1)^{\frac{3-2s-k}{2}}~\text{for}~0\le s\le 3,\\
	&\partial_{x_2}^s({\bf v}_{3}^{2})^{(2)}(x)=0~\text{for}~ s\ge 4.
\end{split}
\end{align}
Then, in view of \eqref{fzl1-}, we have
\begin{align*}
	({\bf f}_3^2)^{(1)}&=({\bf f}_3^1)^{(1)}+\mu\partial_{x_2x_2}({\bf v}_{3}^{2})^{(1)}+(\lambda+2\mu)\partial_{x_1x_1}({\bf v}_{3}^{2})^{(1)}+(\lambda+\mu)\partial_{x_1x_2}({\bf v}_{3}^{2})^{(2)}\\
	&=(\lambda+2\mu)\partial_{x_1x_1}({\bf v}_{3}^{1})^{(1)}+(\lambda+2\mu)\partial_{x_1x_1}({\bf v}_{3}^{2})^{(1)}+(\lambda+\mu)\partial_{x_1x_2}({\bf v}_{3}^{2})^{(2)},\\
	({\bf f}_3^2)^{(2)}&=({\bf f}_3^1)^{(2)}+(\lambda+\mu)\partial_{x_{1}x_2}({\bf v}_{3}^{2})^{(1)}+(\lambda+2\mu)\partial_{x_{2}x_2}({\bf v}_{3}^{2})^{(2)}+\mu\partial_{x_{1}x_1}({\bf v}_{3}^{2})^{(2)}\\
	&=\mu\partial_{x_{1}x_1}({\bf v}_{3}^{2})^{(2)}.
\end{align*}
Thus, it follows from \eqref{v31dds} and \eqref{gs} that, in $\Omega_{2R}$,
\begin{align*}
|({\bf f}_3^2)^{(1)}(x)|\leq C, \quad |({\bf f}_3^2)^{(2)}(x)|\leq C\delta(x_1)^{1/2},
\end{align*}
and
$$|\nabla({\bf f}_3^2)(x)|\leq C\delta(x_1)^{-1}.$$
By applying Proposition \ref{propnew}, we have, for $x\in \Omega_{R}$,
 $$\Big\|\nabla^{2}\Big({\bf u}_{13}-({\bf v}_{3}^{1}+{\bf v}_{3}^{2})\Big)\Big\|_{L^\infty(\Omega_{\delta(x_1)/2}(x_1))}\leq C,$$
and thus,
\begin{equation*}
	|\nabla^2 {\bf u}_{13}(x)|\leq C|\nabla^2({\bf v}_{3}^{1}+{\bf v}_{3}^{2})(x)|+C\le C\delta(x_1)^{-1}.
\end{equation*}

For $l\ge3$, we choose  inductively ${\bf v}_{3}^{l}=\begin{pmatrix}
	({\bf v}_{3}^{l})^{(1)},
	({\bf v}_{3}^{l})^{(2)}
\end{pmatrix}^{T}$ such that ${\bf v}_{3}^{l}(x)=0$ on $\Gamma^+_{2R}\cup\Gamma^-_{2R}$, and in $\Omega_{2R}$,
\begin{align*}%\label{def_v3l1}
({\bf v}_{3}^{l})^{(1)}(x)&=\frac{1}{\mu}\int_{-\varepsilon/2-h_2(x_1)}^{\varepsilon/2+h_1(x_1)} G(y,x_2) ({\bf f}_{3}^{l-1})^{(1)}(x_1,y) dy,\\
({\bf v}_{3}^{l})^{(2)}(x)&=\frac{1}{\lambda+2\mu}\int_{-\varepsilon/2-h_2(x_1)}^{\varepsilon/2+h_1(x_1)} G(y,x_2) \Big(({\bf f}_{3}^{(l-1)})^{(2)}+(\lambda+\mu)\partial_{x_{1}x_2}({\bf v}_{3}^{l})^{(1)}\Big)(x_1,y) dy,
\end{align*}
so that, 
\begin{align*}
	\begin{split}
		\mu\partial_{x_2x_2}({\bf v}_{3}^{l})^{(1)}&=-({\bf f}_{1}^{l-1})^{(1)},\\
		(\lambda+2\mu)\partial_{x_2x_2}({\bf v}_{3}^{l})^{(2)}&=-\big({\bf f}_3^{(l-1)}\big)^{(2)}-(\lambda+\mu)\partial_{x_{1}x_2}({\bf v}_{3}^{l})^{(1)}
	\end{split}\quad\text{in}~\Omega_{2R}.
\end{align*}

We can inductively show that, for $l\ge 2$,
\begin{equation*}
	|({\bf v}_{3}^{l})^{(1)}(x) | \le C\delta(x_1)^{l-1},\quad|({\bf v}_{1}^{l})^{(2)}(x) | \le C\delta(x_1)^{(2l-1)/2}\quad\text{in}~\Omega_{2R},
\end{equation*}
and for $k\ge 0$, in $\Omega_{2R}$,
\begin{align*}
&|\partial_{x_1}^k\partial_{x_2}^s({\bf v}_{3}^{l})^{(1)}(x) | \le C\delta(x_1)^{\frac{2l-2s-k-2}{2}}~\text{for}~0\le s\le 2l-2,\\
&\partial_{x_2}^s({\bf v}_{3}^{l})^{(1)}(x)=0~\text{for}~ s\ge 2l-1,\\
&|\partial_{x_1}^k\partial_{x_2}^s({\bf v}_{3}^{l})^{(2)}(x) | \le C\delta(x_1)^{\frac{2l-2s-k-1}{2}}~\text{for}~0\le s\le 2l-1,\\
&\partial_{x_2}^s({\bf v}_{3}^{l})^{(2)}(x)=0~\text{for}~ s\ge 2l.
\end{align*}
The proof is the same as that of Theorem \ref{thm3.1}, so we omit it here. In addition, a straightforward calculation leads to, in $\Omega_{2R}$,
\begin{align}\label{f1l32d}
|{\bf f}_{3}^{m}(x)| \le C \delta(x_1)^{m-2}, \quad |\nabla^s{\bf f}_{3}^{m}(x)|
\le C \delta(x_1)^{m-2-s},~1\le s\le m-1,
\end{align}
and 
\begin{align}\label{zxv3l2d}
|\nabla^m{\bf v}_{3}^{l}(x)|\leq C\delta(x_1)^{-m/2}~\text{for}~l\le \frac{m+1}{2},~|\nabla^m{\bf v}_{3}^{l}(x)|\leq C\delta(x_1)^{l-m-1}~\text{for}~\frac{m+2}{2}\leq l\le m.
\end{align}
By using \eqref{f1l32d} and Proposition \ref{propnew}, we obtain, for $x\in\Omega_{R}$,
\begin{equation}\label{zyxz1}
	\Big\|\nabla^{m}\Big({\bf u}_{13}-\sum_{l=1}^{m}{\bf v}_{3}^{l}\Big)\Big\|_{L^\infty(\Omega_{\delta(x_1)/2}(x_1))}\leq C.
\end{equation}
As before, by \eqref{zxv3l2d} and \eqref{zyxz1}, Theorem \ref{thm3.3} follows immediately.
\end{proof}

%\vspace{.5cm}

\section{Proof of Theorems \ref{mainth}}\label{sec3}
With Theorems \ref{thm3.1} and \ref{thm3.3} at hand, we are in a position to complete the proof of Theorems \ref{mainth}. As in \cite{bll1,bll2}, we decompose the solution to \eqref{maineqn} as follows
\begin{equation}\label{decom_u}
{\bf u}(x)=\sum_{i=1}^{2}\sum_{\alpha=1}^{3}C_i^{\alpha}{\bf u}_{i\alpha}(x)+{\bf u}_{0}(x),\quad x\in\,\Omega ,
\end{equation}
where ${\bf u}_{i\alpha},{\bf u}_{0}\in{C}^{2}(\Omega;\mathbb R^d)$, respectively, satisfying
\begin{equation}\label{equuia}
\begin{cases}
\mathcal{L}_{\lambda,\mu}{\bf u}_{i\alpha}=0&\mathrm{in}~\Omega,\\
{\bf u}_{i\alpha}=\boldsymbol{\psi}_{\alpha}&\mathrm{on}~\partial{D}_{i},\\
{\bf u}_{i\alpha}=0&\mathrm{on}~\partial{D_{j}}\cup\partial{D},~j\neq i,
\end{cases}
\quad i=1,2,~\alpha=1,2,3,
\end{equation}
and
\begin{equation}\label{equu0}
\begin{cases}
\mathcal{L}_{\lambda,\mu}{\bf u}_{0}=0&\mathrm{in}~\Omega,\\
{\bf u}_{0}=0&\mathrm{on}~\partial{D}_{1}\cup\partial{D_{2}},\\
{\bf u}_{0}=\boldsymbol{\varphi}&\mathrm{on}~\partial{D}.
\end{cases}
\end{equation}
We write
\begin{equation}\label{nablau_dec}
\nabla^{m}{\bf u}=\sum_{\alpha=1}^{3}\left(C_{1}^{\alpha}-C_{2}^{\alpha}\right)\nabla^{m}{\bf u}_{1\alpha}+\nabla^{m} {\bf u}_{b}\quad\mbox{in}~\Omega,
\end{equation}
where ${\bf u}_{b}:=\sum_{\alpha=1}^{3}C_{2}^{\alpha}({\bf u}_{1\alpha}+{\bf u}_{2\alpha})+{\bf u}_{0}$. Since ${\bf u}_{1\alpha}+{\bf u}_{2\alpha}-{\boldsymbol\psi}_\alpha=0$ on $\partial D_1\cup\partial D_2$, $\alpha=1,2,3$, it was proved in \cite[Theorem 1.1]{llby} that, for $m\ge1$, $\nabla^{m}\big({\bf u}_{1\alpha}+{\bf u}_{2\alpha}-{\boldsymbol\psi}_\alpha\big)$ has no singularity in the narrow region. More precisely, we have
\begin{theorem}\label{thm31}
Let ${\bf u}_{i\alpha}$ and  ${\bf u}_{0}$ be the solution to \eqref{equuia} and \eqref{equu0}, $i=1,2$, $\alpha=1,2$. Then we have, for $m\ge 1$, 
\begin{align*}%\label{ubgj}
|\nabla^m({\bf u}_{1\alpha}+{\bf u}_{2\alpha})(x)|,~|\nabla^m{\bf u}_{0}(x)|\leq Ce^{-\frac{C}{\sqrt{\delta(x_1)}}}\quad \text{for}~x\in\Omega_{R},
\end{align*}
and
\begin{align*}%\label{ubgj1}
|\nabla({\bf u}_{13}+{\bf u}_{23})(x)|\leq1+Ce^{-\frac{C}{\sqrt{\delta(x_1)}}} ,\quad |\nabla^{m+1}({\bf u}_{13}+{\bf u}_{23})(x)|\leq Ce^{-\frac{C}{\sqrt{\delta(x_1)}}} \quad \text{for}~x\in\Omega_{R}.
\end{align*}
\end{theorem}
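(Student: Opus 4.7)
The plan is to reduce Theorem \ref{thm31} to a general exponential decay result for the Lam\'e system in a narrow neck, and then invoke the energy iteration framework that has already been set up in the appendix. The crucial observation is that every basis element $\boldsymbol{\psi}_\alpha$ of the rigid displacement space satisfies $\mathbf{e}(\boldsymbol{\psi}_\alpha)=0$, hence $\mathcal{L}_{\lambda,\mu}\boldsymbol{\psi}_\alpha=0$ identically. Therefore, setting
\[
\mathbf{w}_\alpha:=\mathbf{u}_{1\alpha}+\mathbf{u}_{2\alpha}-\boldsymbol{\psi}_\alpha,
\]
the boundary conditions in \eqref{equuia} give $\mathcal{L}_{\lambda,\mu}\mathbf{w}_\alpha=0$ in $\Omega$, $\mathbf{w}_\alpha=0$ on $\partial D_1\cup\partial D_2$, and $\mathbf{w}_\alpha=-\boldsymbol{\psi}_\alpha$ on $\partial D$. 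The function $\mathbf{u}_0$ is already in this class via \eqref{equu0}. By standard elliptic theory both $\|\mathbf{w}_\alpha\|_{L^\infty(\Omega)}$ and $\|\mathbf{u}_0\|_{L^\infty(\Omega)}$ are bounded by a constant independent of $\varepsilon$.

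Next, I would establish the following uniform statement: if $\mathbf{v}\in H^1(\Omega;\mathbb{R}^d)$ solves $\mathcal{L}_{\lambda,\mu}\mathbf{v}=0$ in $\Omega$ with $\mathbf{v}=0$ on $\partial D_1\cup\partial D_2$ and $\|\mathbf{v}\|_{L^\infty(\Omega)}\le C$, then $|\nabla^m\mathbf{v}(x)|\le C\exp(-c/\sqrt{\delta(x_1)})$ for $x\in\Omega_R$ and every $m\ge 0$. The proof is an energy iteration: for $0<t<s<R$, let $F(t):=\int_{\Omega_t}(\mathbb{C}^0\mathbf{e}(\mathbf{v})):\mathbf{e}(\mathbf{v})\,dx$, and choose a cut-off $\eta$ supported in $\Omega_s$ with $\eta\equiv 1$ on $\Omega_t$. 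Testing the weak formulation against $\mathbf{v}\eta^2$ gives a Caccioppoli-type bound
\[
F(t)\le \frac{C}{(s-t)^2}\int_{\Omega_s\setminus\Omega_t}|\mathbf{v}|^2\,dx.
\]
Since $\mathbf{v}$ vanishes on $\Gamma_s^{+}\cup\Gamma_s^{-}$, the one-dimensional Poincar\'e inequality in the $x_d$-direction (with width $\delta(x_1)$) and Korn's inequality yield
\[
\int_{\Omega_s\setminus\Omega_t}|\mathbf{v}|^2\,dx\le C\delta(s)^{2}\bigl(F(s)-F(t)\bigr).
\]
Choosing the step $s-t\sim \sqrt{\delta(t)}$ and iterating on the resulting inequality $F(t)\le \frac{C\delta(s)^2}{(s-t)^2}(F(s)-F(t))$ produces the exponential energy decay $F(t)\le C\exp(-c/\sqrt{\delta(t)})$. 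Rescaling the neck cross-section $\Omega_{\delta(x_1)/2}(x_1)$ to unit scale and applying interior/boundary Schauder estimates for the Lam\'e system (of the type used in Proposition \ref{propnew}) transfers the $L^2$ energy decay into pointwise bounds on $\mathbf{v}$ and all of its derivatives, producing the claimed $\exp(-c/\sqrt{\delta(x_1)})$ control.

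The two statements of Theorem \ref{thm31} are then immediate. For $\alpha=1,2$ and for $\mathbf{u}_0$, since $\boldsymbol{\psi}_\alpha$ is constant we have $\nabla^m(\mathbf{u}_{1\alpha}+\mathbf{u}_{2\alpha})=\nabla^m\mathbf{w}_\alpha$ and $\nabla^m\mathbf{u}_0$ directly, and the exponential bound applies. For $\alpha=3$, $\boldsymbol{\psi}_3=(x_2,-x_1)^T$ is linear, so $\nabla\boldsymbol{\psi}_3$ is a constant skew matrix of bounded norm and $\nabla^k\boldsymbol{\psi}_3=0$ for $k\ge 2$; writing
\[
\nabla(\mathbf{u}_{13}+\mathbf{u}_{23})=\nabla\boldsymbol{\psi}_3+\nabla\mathbf{w}_3,\qquad \nabla^{m+1}(\mathbf{u}_{13}+\mathbf{u}_{23})=\nabla^{m+1}\mathbf{w}_3,
\]
explains both the extra constant $1$ in the first-derivative bound and its absence in higher derivatives.

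The main obstacle will be the rigorous execution of the iteration on the curved thin domain, in particular verifying the Poincar\'e and Korn inequalities on $\Omega_s\setminus\Omega_t$ with constants independent of $\varepsilon$, and controlling boundary terms arising from the graph-defined top and bottom of the neck. The other delicate point is the passage from the $L^2$-type exponential decay to pointwise higher-derivative decay, which requires rescaling to balls of size $\sqrt{\delta(x_1)}$ and applying $C^{m,\alpha}$ estimates up to $\Gamma^{\pm}_R$ with control that does not deteriorate as $\varepsilon\to 0$; this is precisely the content that has been packaged into Theorem 1.1 of \cite{llby}, which I would cite to conclude.
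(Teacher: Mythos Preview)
Your proposal is correct and follows exactly the approach the paper takes: the paper does not give its own proof of Theorem~\ref{thm31} but simply cites \cite[Theorem 1.1]{llby}, after observing that $\mathbf{u}_{1\alpha}+\mathbf{u}_{2\alpha}-\boldsymbol{\psi}_\alpha$ vanishes on $\partial D_1\cup\partial D_2$. Your sketch of the reduction $\mathbf{w}_\alpha:=\mathbf{u}_{1\alpha}+\mathbf{u}_{2\alpha}-\boldsymbol{\psi}_\alpha$, the Caccioppoli--Poincar\'e iteration yielding exponential energy decay, and the rescaled Schauder estimates to upgrade to pointwise higher-derivative bounds is precisely the content of the cited reference, and your handling of the $\alpha=3$ case (splitting off the linear part $\boldsymbol{\psi}_3$) is the right explanation for the extra constant $1$ in the gradient bound.
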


We have the following result, which is from \cite[Lemma 4.1, Proposition 4.2]{bll1}.
\begin{lemma}\label{cdgj}
Let $C_i^\alpha$ be defined in \eqref{decom_u}. Then
\begin{equation*}
|C_i^\alpha|\leq C, \quad i=1,2, \quad\alpha=1,2,3,
\end{equation*}
and 
\begin{equation*}
|C_1^\alpha-C_2^\alpha|\leq C\sqrt{\varepsilon},\quad d=2,\quad \alpha=1,2.
\end{equation*}
\end{lemma}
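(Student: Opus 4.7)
The plan is to derive a $6\times 6$ linear system for the coefficients $\{C_i^\alpha\}$ and invert it carefully, exploiting a key cancellation in the translation block. First, substituting the decomposition \eqref{decom_u} into the integral constraints $\int_{\partial D_i} \frac{\partial {\bf u}}{\partial \nu}\big|_{+} \cdot \boldsymbol{\psi}_\alpha = 0$ from \eqref{maineqn} and using integration by parts (together with $\mathcal{L}_{\lambda,\mu}{\bf u}_{j\beta} = \mathcal{L}_{\lambda,\mu}{\bf u}_0 = 0$ in $\Omega$) yields a symmetric positive definite system $A\,\mathbf{C} = \mathbf{b}$, where
$$
A_{(i\alpha)(j\beta)} = \int_\Omega \bigl(\mathbb{C}^0 {\bf e}({\bf u}_{i\alpha})\bigr) : {\bf e}({\bf u}_{j\beta})\, dx, \qquad \mathbf{b}_{i\alpha} = -\int_\Omega \bigl(\mathbb{C}^0 {\bf e}({\bf u}_0)\bigr) : {\bf e}({\bf u}_{i\alpha})\, dx.
$$

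Next, I would estimate the matrix entries from the gradient bounds in \eqref{gradientest}. In dimension two, integrating over the neck region $\Omega_R$ gives $A_{(i\alpha)(i\alpha)} \sim \varepsilon^{-1/2}$ for $\alpha = 1,2$ and $A_{(i3)(i3)} = O(1)$, while $|\mathbf{b}_{i\alpha}| = O(\varepsilon^{-1/4})$ by Cauchy-Schwarz (since $\|\nabla {\bf u}_0\|_{L^2}$ is bounded independently of $\varepsilon$ via a barrier argument). The decisive ingredient is that for $\alpha \in \{1,2\}$ the function ${\bf u}_{1\alpha} + {\bf u}_{2\alpha}$ equals the constant vector $\boldsymbol{\psi}_\alpha$ on $\partial D_1 \cup \partial D_2$ and vanishes on $\partial D$, hence admits an $H^1$-bounded extension uniform in $\varepsilon$. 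Testing against ${\bf u}_{1\alpha}$ gives the cancellation
$$
A_{(1\alpha)(1\alpha)} + A_{(1\alpha)(2\alpha)} = \int_\Omega \bigl(\mathbb{C}^0 {\bf e}({\bf u}_{1\alpha})\bigr) : {\bf e}({\bf u}_{1\alpha} + {\bf u}_{2\alpha})\, dx = O(1).
$$
Thus the diagonal and off-diagonal entries of the translation $2\times 2$ block are both $\sim \varepsilon^{-1/2}$ with equal magnitude and opposite effective signs.

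For the uniform bound $|C_i^\alpha| \le C$, I would use the energy estimate $\|{\bf u}\|_{H^1(D)} \le C\|\boldsymbol\varphi\|_{H^{1/2}(\partial D)}$ for \eqref{maineqn}; since ${\bf u}|_{\partial D_i} = \sum_\alpha C_i^\alpha \boldsymbol{\psi}_\alpha$ is rigid and the $\boldsymbol{\psi}_\alpha$ are linearly independent as traces on $\partial D_i$, this gives $|C_i^\alpha| \le C\|{\bf u}\|_{H^{1/2}(\partial D_i)} \le C$. For the refined estimate, I would subtract the $(2\alpha)$-th row of $A\,\mathbf{C} = \mathbf{b}$ from the $(1\alpha)$-th row for each $\alpha \in \{1,2\}$. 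Using symmetry of $A$ and the cancellation above, the coefficient of $(C_1^\alpha - C_2^\alpha)$ is
$$
A_{(1\alpha)(1\alpha)} - A_{(1\alpha)(2\alpha)} = 2A_{(1\alpha)(1\alpha)} - O(1) \sim \varepsilon^{-1/2},
$$
while the remaining terms (coming from the rotation coefficients $C_j^3$ and from $\mathbf{b}$) are bounded by $|C_j^3|\cdot O(1) + O(\varepsilon^{-1/4}) = O(\varepsilon^{-1/4})$. Dividing through yields $|C_1^\alpha - C_2^\alpha| \le C\sqrt{\varepsilon}$.

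The main obstacle is rigorously controlling the couplings between the translation block and the rotation block to make sure they do not spoil the leading balance. Specifically, one must show that the mixed entries $A_{(i\alpha)(j3)}$ and the right-hand side $\mathbf{b}_{i\alpha}$ are sub-dominant compared to $\varepsilon^{-1/2}$, which follows from Cauchy-Schwarz together with the bounded-energy estimates on the rotation modes and on ${\bf u}_0$. Performing the Schur complement reduction then decouples the translation equations from the rotation equations up to $O(\varepsilon^{-1/4})$ error, which is compatible with the $\sqrt{\varepsilon}$ target after dividing by the $\sim \varepsilon^{-1/2}$ diagonal.
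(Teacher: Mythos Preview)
Your overall strategy---deriving the $6\times 6$ linear system from the fourth line of \eqref{maineqn}, exploiting the cancellation coming from the bounded energy of ${\bf u}_{1\alpha}+{\bf u}_{2\alpha}$, and subtracting rows---is precisely the route taken in \cite[Lemma~4.1, Proposition~4.2]{bll1}, which the paper simply cites for this lemma. The argument for $|C_i^\alpha|\le C$ via the global $H^1$ bound and the trace is also the standard one.

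There is, however, a quantitative gap in your refined estimate. Your Cauchy--Schwarz bound $|\mathbf{b}_{i\alpha}|=O(\varepsilon^{-1/4})$, once divided by the leading coefficient $\sim\varepsilon^{-1/2}$, yields only $|C_1^\alpha-C_2^\alpha|=O(\varepsilon^{1/4})$, not $O(\sqrt\varepsilon)$ as you claim in the next sentence. The fix is to integrate by parts once more: since ${\bf u}_{i\alpha}=\boldsymbol\psi_\alpha$ on $\partial D_i$ and vanishes on the rest of $\partial\Omega$, one has
\[
\mathbf{b}_{i\alpha}=-\int_{\partial D_i}\frac{\partial{\bf u}_0}{\partial\nu}\Big|_+\cdot\boldsymbol\psi_\alpha=O(1),
\]
using $|\nabla{\bf u}_0|\le C$ on $\partial D_i$ (Theorem~\ref{thm31}). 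The same boundary-integral trick upgrades your key cancellation $A_{(1\alpha)(1\alpha)}+A_{(1\alpha)(2\alpha)}=O(1)$ from the $O(\varepsilon^{-1/4})$ that naive Cauchy--Schwarz would give.

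A second point you gloss over is the coupling between the two translation modes: after subtracting rows you obtain a $2\times 2$ system with matrix $(A_{(1\alpha)(1\beta)})_{\alpha,\beta=1,2}$ acting on $(C_1^\alpha-C_2^\alpha)$, and you must show its inverse is $O(\sqrt\varepsilon)$. This does not follow from positive definiteness alone; one needs that for \emph{every} unit vector $\xi\in\mathbb{R}^2$ the energy of the solution with constant boundary data $\xi$ on $\partial D_1$ and $0$ on $\partial D_2$ is bounded below by $c\,\varepsilon^{-1/2}$, giving a uniform lower bound on the smallest eigenvalue. In \cite{bll1} this is handled by a direct lower-energy estimate; you should make this step explicit rather than folding it into the Schur-complement remark.
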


\begin{proof}[Proof of Theorem \ref{mainth}]
By virtue of \eqref{nablau_dec}, Theorems \ref{thm3.1},  \ref{thm3.3}, \ref{thm31}, and Lemma \ref{cdgj}, we have, for $x\in\Omega_{R}$,
\begin{align*}
|\nabla^{m}{\bf u}(x)|\leq&\, |C_1^1-C_2^1||\nabla^{m} {\bf u}_{11}(x)|+|C_1^2-C_2^2||\nabla^{m} {\bf u}_{12}(x)|+C|\nabla^{m} {\bf u}_{13}(x)|+Ce^{-\frac{C}{\sqrt{\delta(x_1)}}}+C\\
\leq&\, C\sqrt{\varepsilon}\delta(x_1)^{-\frac{m+1}{2}}+C\delta(x_1)^{-m/2}+Ce^{-\frac{C}{\sqrt{\delta(x_1)}}}+C\\
\leq&\, C\delta(x_1)^{-m/2}.
\end{align*}
This completes the proof of Theorem \ref{mainth}.
\end{proof}

%\vspace{.5cm}

\section{New Ingredients for higher dimensional cases and Proofs of the Main Estimates}\label{sec4}

This section is devoted to proving Theorem \ref{mainth3d} by an analogous argument as in Section \ref{sec3}.  We present the main differences and a sketch of the proof of Theorems \ref{mainth3d}. 

Recalling that a basis of $\Psi$ in $\mathbb{R}^d$ is
$$\left\{\boldsymbol{e}_{i},~x_{j}\boldsymbol{e}_{k}-x_{k}\boldsymbol{e}_{j}:~1\leq\,i\leq\,d,~1\leq\,j<k\leq\,d\right\},$$
where $\boldsymbol{e}_{1},\cdots,\boldsymbol{e}_{d}$ denote the standard basis of $\mathbb{R}^{d}$. We denote this basis as $\{\boldsymbol{\psi}_{\alpha}\}$, $\alpha=1,2,\dots,\frac{d(d+1)}{2}$ such that 
\begin{align*}
\boldsymbol{\psi}_{\alpha}&=\boldsymbol{e}_{\alpha}, \quad
\alpha=1,2,\dots,d,\\
 \boldsymbol{\psi}_{\alpha}&=x_{i_\alpha}\boldsymbol{e}_{j_\alpha}-x_{j_\alpha}\boldsymbol{e}_{i_\alpha}, \quad\alpha=d+1,\dots,\frac{d(d-1)}{2}+1,~1\le i_\alpha< j_\alpha\le d-1,\\
 \boldsymbol{\psi}_{\alpha}&=x_d \boldsymbol{e}_{\alpha-\frac{d(d-1)}{2}-1}-x_{\alpha-\frac{d(d-1)}{2}-1} \boldsymbol{e}_{d}, \quad\alpha=\frac{d(d-1)}{2}+2,\dots,\frac{d(d+1)}{2}.
 \end{align*}

We consider the following Dirichlet problem
\begin{equation}\label{equ_v13d}
\begin{cases}
\mathcal{L}_{\lambda,\mu}{\bf u}_{1\alpha}=0&\mathrm{in}~\Omega,\\
{\bf u}_{1\alpha}=\boldsymbol{\psi}_{\alpha}&\mathrm{on}~\partial{D}_{1},\\
{\bf u}_{1\alpha}=0&\mathrm{on}~\partial{D_{2}}\cup\partial{D}.
\end{cases}
\end{equation}
For each ${\bf u}_{1\alpha}(x)$, $\alpha=1,2,\dots,\frac{d(d+1)}{2}$, we construct a family of auxiliary functions to capture the singularity term of $\nabla^{m}{\bf u}_{1\alpha}(x)$ for $m\ge 1$. First,

\subsection{Estimates of $\nabla^{m}{\bf u}_{1\alpha}$, $\alpha=1,2,\dots,d$} 
We have  
\begin{theorem}\label{thm4.1}
Under the same assumption as in Theorem \ref{mainth3d}, let ${\bf u}_{1\alpha}$ be the solution to \eqref{equ_v13d} for $\alpha=1,2,\dots,d$. Then for sufficiently small $0<\varepsilon<1/2$ and for $m\ge 1$, we have
 \begin{align*}
|\nabla^m {\bf u}_{1\alpha}(x)|\leq C\delta(x')^{-\frac{m+1}{2}}\quad\text{for}~x\in\Omega_{R}.
 \end{align*}
\end{theorem}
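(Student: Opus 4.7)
The plan is to extend the inductive construction from the proof of Theorem \ref{thm3.1} to dimension $d \geq 3$. For each $\alpha \in \{1, \ldots, d\}$, I will build a sequence of auxiliary vector-valued functions ${\bf v}_\alpha^l = (({\bf v}_\alpha^l)^{(1)}, \ldots, ({\bf v}_\alpha^l)^{(d)})^T$, $l = 1, \ldots, m$, such that the partial sum $\sum_{l=1}^m {\bf v}_\alpha^l$ matches the Dirichlet data $\boldsymbol{\psi}_\alpha$ on $\Gamma_{2R}^+$ and $0$ on $\Gamma_{2R}^-$, while $|\mathcal{L}_{\lambda,\mu} \sum_{l=1}^m {\bf v}_\alpha^l|$ decays as $l$ grows. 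Proposition \ref{propnew} then provides $\|\nabla^m({\bf u}_{1\alpha} - \sum_{l=1}^m {\bf v}_\alpha^l)\|_{L^\infty(\Omega_{\delta(x')/2}(x'))} \leq C$, and pointwise estimates on each ${\bf v}_\alpha^l$ yield the claim.

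The base function ${\bf v}_\alpha^1$ is constructed so that its $\alpha$-th component is the linear interpolation $({\bf v}_\alpha^1)^{(\alpha)}(x) = (x_d + \varepsilon/2 + h_2(x'))/\delta(x')$, enforcing the boundary values. For $\alpha < d$, the dominant cross term in the Lam\'e operator is $(\lambda+\mu)\partial_{x_\alpha x_d}({\bf v}_\alpha^1)^{(\alpha)}$ appearing in the $d$-th equation, which is of order $\delta(x')^{-3/2}$; following the 2D prescription, I cancel it by defining
\begin{equation*}
({\bf v}_\alpha^1)^{(d)}(x) = \frac{\lambda+\mu}{\lambda+2\mu}\int_{-\varepsilon/2 - h_2(x')}^{\varepsilon/2 + h_1(x')} G(y, x_d)\, \partial_{x_\alpha x_d}({\bf v}_\alpha^1)^{(\alpha)}(x', y)\, dy,
\end{equation*}
with $G(y,x_d)$ the natural one-dimensional Green function on the vertical interval, in analogy with \eqref{defgreenfunction}; the remaining components are set to zero. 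For $\alpha = d$, the $d$-th component is taken to be the linear interpolation and all other components vanish, since no singular cross coupling is produced. Derivative bounds analogous to \eqref{nabla_x2v111}--\eqref{v112dgj2dk}, with $x_1$ replaced by $x'$ and $x_2$ by $x_d$, yield $|\mathcal{L}_{\lambda,\mu} {\bf v}_\alpha^1| \leq C\delta(x')^{-1}$.

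For $l \geq 2$, the ${\bf v}_\alpha^l$ are defined recursively through Green function integrals of the residual ${\bf f}_\alpha^{l-1} := \mathcal{L}_{\lambda,\mu} \sum_{j=1}^{l-1} {\bf v}_\alpha^j$, exactly as in \eqref{def_v1l1}--\eqref{def_v1l2}: for each $k \neq d$, the $k$-th component is chosen so that $\mu\partial_{x_d x_d}({\bf v}_\alpha^l)^{(k)}$ absorbs $({\bf f}_\alpha^{l-1})^{(k)}$, while the $d$-th component is chosen so that $(\lambda+2\mu)\partial_{x_d x_d}({\bf v}_\alpha^l)^{(d)}$ absorbs both $({\bf f}_\alpha^{l-1})^{(d)}$ and the freshly generated divergence coupling $(\lambda+\mu)\sum_{k<d}\partial_{x_k x_d}({\bf v}_\alpha^l)^{(k)}$. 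An induction on $l$, tracking the polynomial degree in $x_d$, produces the analogs of \eqref{xydgj2d1}--\eqref{xydgj2d2}; in particular, $|\nabla^s {\bf f}_\alpha^m| \leq C\delta(x')^{m-2-s}$ for $0 \leq s \leq m-1$, and, as in \eqref{zx2dv1l1}, the dominant contribution among $|\nabla^m {\bf v}_\alpha^l|$ (for small $l$) is $C\delta(x')^{-(m+1)/2}$. Combining with Proposition \ref{propnew} gives the desired estimate.

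The main technical obstacle is the proliferation of cross-coupling terms: in dimension $d \geq 3$ there are $d-1$ tangential directions, so at each induction step a given ${\bf v}_\alpha^l$ component feeds into multiple equations via terms of the form $\partial_{x_k x_d}({\bf v}_\alpha^l)^{(k)}$, and the bookkeeping on the polynomial degree in $x_d$ for each $({\bf v}_\alpha^l)^{(k)}$ must be refined accordingly to ensure that $\partial_{x_d}^s$ eventually annihilates each component. Nevertheless, the crucial structural feature — that $\partial_{x_d x_d}$ applied to the $d$-th component can absorb all divergence-type terms generated at each step — carries over from the two-dimensional case essentially unchanged, so the induction closes with the same exponents and yields $|\nabla^m {\bf u}_{1\alpha}(x)| \leq C\delta(x')^{-(m+1)/2}$ for all $\alpha \in \{1, \ldots, d\}$.
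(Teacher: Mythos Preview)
Your plan follows the paper's approach closely and correctly for $\alpha<d$: the initial corrector ${\bf v}_\alpha^1$ with nonzero $\alpha$-th and $d$-th components, the Green-function recursion \eqref{def_v1l1}--\eqref{def_v1l2} generalized so that $\mu\partial_{x_dx_d}({\bf v}_\alpha^l)^{(k)}$ kills $({\bf f}_\alpha^{l-1})^{(k)}$ for $k<d$ and $(\lambda+2\mu)\partial_{x_dx_d}({\bf v}_\alpha^l)^{(d)}$ absorbs $({\bf f}_\alpha^{l-1})^{(d)}+(\lambda+\mu)\sum_{k<d}\partial_{x_kx_d}({\bf v}_\alpha^l)^{(k)}$, and the final appeal to Proposition~\ref{propnew} are exactly what the paper does.

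There is, however, one concrete error in your treatment of $\alpha=d$. If you set $({\bf v}_d^1)^{(d)}=(x_d+\varepsilon/2+h_2(x'))/\delta(x')$ and all other components zero, then $\nabla\cdot{\bf v}_d^1=\partial_{x_d}({\bf v}_d^1)^{(d)}=1/\delta(x')$, and hence for each $i<d$
\[
(\mathcal{L}_{\lambda,\mu}{\bf v}_d^1)^{(i)}=(\lambda+\mu)\,\partial_{x_i}\!\big(1/\delta(x')\big),
\]
which is of order $\delta(x')^{-3/2}$, not $\delta(x')^{-1}$. So singular cross coupling \emph{is} produced; it just lands in the $i$-th equations rather than in the $d$-th. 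The fix mirrors the two-dimensional case $\alpha=2$ (cf.\ the formulas for ${\bf u}_{12}$ in Section~\ref{sec5}): for $\alpha=d$ one should add, at level $l=1$, Green-function correctors in the tangential components,
\[
({\bf v}_d^1)^{(i)}(x)=\frac{\lambda+\mu}{\mu}\int_{-\varepsilon/2-h_2(x')}^{\varepsilon/2+h_1(x')}G(y,x_d)\,\partial_{x_ix_d}({\bf v}_d^1)^{(d)}(x',y)\,dy,\quad i<d,
\]
so that $\mu\partial_{x_dx_d}({\bf v}_d^1)^{(i)}$ cancels $(\lambda+\mu)\partial_{x_ix_d}({\bf v}_d^1)^{(d)}$ and one recovers $|\mathcal{L}_{\lambda,\mu}{\bf v}_d^1|\le C\delta(x')^{-1}$. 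With this correction your induction matches the paper's and the rest of your argument goes through unchanged.
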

This is a significant improvement of previous results in \cite{bll2,LX23}. Denote the vertical distance between $D_{1}$ and $D_{2}$ by 
$$\delta(x'):=\varepsilon+h_{1}(x')+h_{2}(x').$$

\begin{proof}
We only present the proof of the case when $\alpha=1$, since the other cases are analogous. 

{\bf Step 1.} 
We first construct ${\bf v}_{1}^{1}=\begin{pmatrix}
({\bf v}_{1}^{1})^{(1)},0,\dots,0,
({\bf v}_{1}^{1})^{(d)}
\end{pmatrix}^{T}$ such that ${\bf v}_{1}^{1}(x)={\bf u}_{11}(x)$ on $\Gamma^+_{2R}\cup\Gamma^-_{2R}$, and
\begin{align}
({\bf v}_{1}^{1})^{(1)}(x)&=\frac{x_d+\varepsilon/2+h_2(x')}{\delta(x')}{\boldsymbol\psi}_{1} \quad\text{in}~\Omega_{2R},\label{defv113d}\\
({\bf v}_{1}^{1})^{(d)}(x)&=\frac{\lambda+\mu}{\lambda+2\mu}\int_{-\varepsilon/2-h_2(x')}^{\varepsilon/2+h_1(x')} G(y,x_d)\partial_{x_1x_d} ({\bf v}_{1}^{1})^{(1)}(x',y) dy\quad\text{in}~\Omega_{2R},\label{defv1123d}
\end{align}
where 
\begin{align}\label{defgreenfunction3d}
	G(y,x_d)=\frac{1}{\delta(x')}\begin{cases}
		(\frac{\varepsilon}{2}+h_1(x')-x_d)(y+\frac{\varepsilon}{2}+h_2(x')), \quad -\frac{\varepsilon}{2}-h_2(x')\le y\le x_d,\\
	(\frac{\varepsilon}{2}+h_1(x')-y)(x_d+\frac{\varepsilon}{2}+h_2(x')), \quad x_d\le x\le \frac{\varepsilon}{2}+h_1(x')
	\end{cases}
\end{align}
is the Green function. By \eqref{defv1123d} and \eqref{defgreenfunction3d}, we have
\begin{equation}\label{3dfz1}
	(\lambda+2\mu)\partial_{x_dx_d}({\bf v}_{1}^{1})^{(d)}=-(\lambda+\mu)\partial_{x_1x_d} ({\bf v}_{1}^{1})^{(1)}\quad\text{in}~\Omega_{2R}.
\end{equation}
By \eqref{h1h14} and \eqref{defgreenfunction3d}, we have%, for $1\le i\textless d-1$,
\begin{equation}\label{3dgredgj}
\begin{split}
&|G(y,x_d)|\leq C\delta(x'), \quad\quad\quad\quad\quad |\nabla_{x'}G(y,x_d)|\leq C\delta(x')^{1/2},\quad\quad|\partial_{x_d}G(y,x_d)|\leq C,\\
&|\nabla_{x'}^kG(y,x_d)|\leq C\delta(x')^{-(k-2)/2},\quad1_{y\neq x_d}\cdot\partial_{x_d}^kG(y,x_d)=0\quad\mbox{for}~2\le k\le m+1.
\end{split}
\end{equation}
By virtue of  \eqref{h1h14} and \eqref{defv113d}, a direct calculation gives, for $1\le i< d$, in $\Omega_{2R}$,
 \begin{equation}\label{3dgjdsgjv111}
	\begin{split}
&|\nabla_{x'}({\bf v}_{1}^{1})^{(1)}|\leq C\delta(x')^{-1/2},\quad\,\quad|\partial_{x_d}({\bf v}_{1}^{1})^{(1)}|\leq C\delta(x')^{-1},\\
&|\nabla_{x'}\partial_{x_d}({\bf v}_{1}^{1})^{(1)}|\leq C\delta(x')^{-3/2},\quad\partial_{x_dx_d}({\bf v}_{1}^{1})^{(1)}=0,\\
&|\nabla_{x'}^k({\bf v}_{1}^{1})^{(1)}|\leq C\delta(x')^{-k/2},\quad\quad |\nabla_{x'}^{k-1}\partial_{x_{d}}({\bf v}_{1}^{1})^{(1)}|\leq C\delta(x')^{-(k+1)/2}~\text{for}~2\le k\le m+1.
\end{split}
\end{equation}
In addition, by \eqref{defv1123d} and \eqref{3dgredgj}, we have
\begin{equation*}
|({\bf v}_{1}^{1})^{(d)}|\leq C\delta(x')^{1/2}, \quad |\partial_{x_{d}}({\bf v}_{1}^{1})^{(d)}|\leq C\delta(x')|\partial_{x_1x_d} ({\bf v}_{1}^{1})^{(1)}|\leq C\delta(x')^{-1/2}\quad\text{in}~\Omega_{2R},
\end{equation*}
and
\begin{equation*}
 |\nabla_{x'}({\bf v}_{1}^{1})^{(d)}|\leq C\delta(x')^2|\nabla_{x'}\partial_{x_1}\partial_{x_d} ({\bf v}_{1}^{1})^{(1)}|+C\delta(x')^{3/2}|\partial_{x_1}\partial_{x_d} ({\bf v}_{1}^{1})^{(1)}|\leq C\quad\text{in}~\Omega_{2R}.
\end{equation*}
Similarly, for the high-order derivatives, by \eqref{defv1123d}, \eqref{defgreenfunction3d}, \eqref{3dgredgj} and \eqref{3dgjdsgjv111}, we obtain, for $2\le k\le m+1$,
\begin{equation}\label{dsgj13dv112}
	\begin{split}
	 &|\nabla_{x'}^k({\bf v}_{1}^{1})^{(d)}|\leq C\delta(x')^{-(k-1)/2}, \\
	 &|\nabla_{x'}^{k-1}\partial_{x_{d}}({\bf v}_{1}^{1})^{(d)}|\leq C\delta(x')^{-k/2},\\
&|\nabla_{x'}^{k-2}\partial_{x_{d}}^2({\bf v}_{1}^{1})^{(d)}|\leq C|\nabla_{x'}^{k-1}\partial_{x_d} ({\bf v}_{1}^{1})^{(1)}| \leq C\delta(x')^{-(k+1)/2}
\end{split}\quad\text{in}~\Omega_{2R}.
\end{equation}

Denote 
$${\bf f}_{1}^{1}(x):=\mathcal{L}_{\lambda,\mu}{\bf v}_{1}^{1}(x).$$ Then, by using \eqref{3dfz1}, we have
\begin{align*}
({\bf f}_{1}^{1})^{(1)}&=\mu\Delta_{x'}({\bf v}_{1}^{1})^{(1)}+(\lambda+\mu)\partial_{x_{1}x_1}({\bf v}_{1}^{1})^{(1)}+(\lambda+\mu)\partial_{x_1x_d}({\bf v}_{1}^{1})^{(d)},\\
({\bf f}_{1}^{1})^{(j)}&=(\lambda+\mu)\Big(\partial_{x_1x_j}({\bf v}_{1}^{1})^{(1)}+\partial_{x_jx_d}({\bf v}_{1}^{1})^{(d)}\Big),\quad\,1<j<d,\\
({\bf f}_{1}^{1})^{(d)}&=(\lambda+\mu)\partial_{x_1x_d}({\bf v}_{1}^{1})^{(1)}+ (\lambda+2\mu)\partial_{x_dx_d}({\bf v}_{1}^{1})^{(d)}+\mu \Delta_{x'}({\bf v}_{1}^{1})^{(d)}=\mu \Delta_{x'}({\bf v}_{1}^{1})^{(d)}.
\end{align*}
In view of \eqref{3dgjdsgjv111} and \eqref{dsgj13dv112}, we obtain
\begin{equation*}
|({\bf f}_{1}^{1})^{(j)}(x)|\leq C\delta(x')^{-1},\,\,1\le j<d,\quad |({\bf f}_{1}^{1})^{(d)}(x)|\leq C\delta(x')^{-1/2}\quad\text{in}~\Omega_{2R}.
\end{equation*}
Hence,
\begin{equation}\label{m=13d}
	|{\bf f}_{1}^{1}(x)|\leq \sum_{j=1}^{d}|({\bf f}_{1}^{1})^{(j)}(x)|\leq C \delta(x')^{-1}.
\end{equation}
By Proposition \ref{propnew}, we have Theorem \ref{thm4.1} holds true for $m=1$ by using estimate \eqref{m=13d}, 
\begin{align*}
	\|\nabla\big({\bf u}_{11}-{\bf v}_{1}^{1}\big)\|_{L^\infty(\Omega_{\delta(x')/2}(x'))}\leq C.
\end{align*}
Thus, we have
\begin{equation*}
	|\nabla {\bf u}_{11}(x)|\leq |\nabla {\bf v}_{1}^{1}(x)|+C\leq C\delta(x')^{-1}.
\end{equation*}

{\bf Step 2.} For $l\ge2$, we denote 
$${\bf f}_1^{l}(x):=\sum_{j=1}^{l}\mathcal{L}_{\lambda,\mu}{\bf v}_{1}^{j}(x)= {\bf f}_1^{l-1}(x)+\mathcal{L}_{\lambda,\mu}{\bf v}_{1}^{l}(x).$$ 
We choose ${\bf v}_{1}^{l}(x)=\begin{pmatrix}
({\bf v}_{1}^{l})^{(1)}(x),({\bf v}_{1}^{l})^{(2)}(x),\dots,
({\bf v}_{1}^{l})^{(d)}(x)
\end{pmatrix}^{T}$ satisfying ${\bf v}_{1}^{l}(x)=0$ on $\Gamma^+_{2R}\cup\Gamma^-_{2R}$, and in $\Omega_{2R}$,
\begin{align*}
	({\bf v}_{1}^{l})^{(i)}(x)&=\frac{1}{\mu}\int_{-\varepsilon/2-h_2(x')}^{\varepsilon/2+h_1(x')} G(y,x_d) ({\bf f}_{1}^{l-1})^{(i)}(x',y) dy,\quad 1\leq i\leq d-1,\\
	({\bf v}_{1}^{l})^{(d)}(x)&=\frac{1}{\lambda+2\mu}\int_{-\varepsilon/2-h_2(x')}^{\varepsilon/2+h_1(x')} G(y,x_d) \Big(({\bf f}_{1}^{l-1})^{(d)}+(\lambda+\mu)\sum_{i=1}^{d-1}\partial_{x_ix_d}({\bf v}_1^l)^{(i)}\Big)(x',y) dy.
\end{align*}
Then, we have, for $1\le i<d$, 
\begin{align*}
	\begin{split}
\mu\partial_{x_dx_d}({\bf v}_{1}^{l})^{(i)}&=-({\bf f}_{1}^{l-1})^{(i)},\\
(\lambda+2\mu)\partial_{x_dx_d}({\bf v}_{1}^{l})^{(d)}&=-({\bf f}_{1}^{l-1})^{(d)}-(\lambda+\mu)\sum_{i=1}^{d-1}\partial_{x_ix_d}({\bf v}_1^l)^{(i)}
	\end{split}\quad\text{in}~\Omega_{2R}.
\end{align*}
Then, for $1\le i\le d-1$, we obtain
\begin{align}\label{3df1}
	({\bf f}_1^l)^{(i)}&=({\bf f}_1^{l-1})^{(i)}+\mu \Delta_{x} ({\bf v}_{1}^{l})^{(i)}+(\lambda+\mu)\partial_{x_{i}}(\nabla\cdot{\bf v}_{1}^{l})\nonumber\\
	&=\mu\Delta_{x'} ({\bf v}_{1}^{l})^{(i)}+(\lambda+\mu)\partial_{x_{i}}(\nabla\cdot{\bf v}_{1}^{l}),
\end{align}
and
\begin{equation}\label{3df2}
	({\bf f}_1^l)^{(d)}=({\bf f}_1^{l-1})^{(d)}+\mu\Delta({\bf v}_{1}^{l})^{(d)}+(\lambda+\mu)\partial_{x_{d}}(\nabla\cdot{\bf v}_{1}^{l})=\mu\Delta_{x'}({\bf v}_{1}^{l})^{(d)}.
\end{equation}

As in the proof of Theorem \ref{thm3.1}, we can inductively obtain the following estimates in $\Omega_{2R}$, for any $l\ge 1$ and $1\le i\le d-1$,
\begin{equation*}
	|({\bf v}_{1}^{l})^{(i)}(x) | \le C\delta(x')^{l-1},\quad|({\bf v}_{1}^{l})^{(d)}(x) | \le C\delta(x')^{(2l-1)/2},
\end{equation*}
and for $k\ge 0$, 
\begin{align}\label{xydgj3d2}
	\begin{split}
	&|\nabla_{x'}^k\partial_{x_d}^s({\bf v}_{1}^{l})^{(i)}(x) | \le C\delta(x')^{\frac{2l-2s-k-2}{2}}~
\text{for}~s\le 2l-1,\\
&\partial_{x_d}^s({\bf v}_{1}^{l})^{i}(x)=0~\text{for}~s\ge2l,\\
&|\nabla_{x'}^k\partial_{x_d}^s({\bf v}_{1}^{l})^{d}(x) | \le C\delta(x')^{\frac{2l-2s-k-1}{2}}~\text{for}~s\le 2l,\\
&\partial_{x_d}^s({\bf v}_{1}^{l})^{(d)}(x)=0~\text{for}~s\ge2l+1.
\end{split}
\end{align}
Combining with \eqref{3df1}--\eqref{xydgj3d2}, we have
\begin{equation}\label{1f1l13d}
	|{\bf f}_{1}^{m}(x)|=\Big|\sum_{l=1}^{m}\mathcal{L}_{\lambda,\mu}{\bf v}_{1}^{l}(x)\Big|\leq C\delta(x')^{m-2},
\end{equation}
and similarly, 
\begin{equation}\label{1f1l13d1}
|\nabla^s{\bf f}_{1}^{m}(x)|\leq C\delta(x')^{m-2-s}, \quad 1\le s\le m-1.
\end{equation}
By using \eqref{1f1l13d}, \eqref{1f1l13d1}, and Proposition \ref{propnew}, we obtain, for $x\in\Omega_{R}$,
$$\Big\|\nabla^{m}\Big({\bf u}_{11}-\sum_{l=1}^{m}{\bf v}_{1}^{l}\Big)\Big\|_{L^\infty(\Omega_{\delta(x')/2}(x'))}\leq C.$$

By virtue of  \eqref{xydgj3d2}, we have
\begin{equation*}%\label{zx3dv1l1}
|\nabla^m{\bf v}_{1}^{l}(x)|\leq C\delta(x')^{-\frac{m+1}{2}}~\text{for}~l\le \frac m2,\quad	|\nabla^m{\bf v}_{1}^{l}(x)|\leq C\delta(x')^{l-m-1}~\text{for}~l\ge \frac{m+1}{2}.
\end{equation*}
Thus, 
\begin{equation*}
	|\nabla^{m}{\bf u}_{11}(x) | \leq \Big|\sum_{l=1}^{m}\nabla^{m}{\bf v}_{1}^{l}(x)\Big| +C\leq C\delta(x')^{-\frac{m+1}{2}}\quad\mbox{for}~x\in \Omega_R.
\end{equation*}
Therefore, Theorem \ref{thm4.1} is proved.
\end{proof}

\subsection{Estimates of $\nabla^{m}{\bf u}_{1\alpha}$, $\alpha=d+1,\dots,\frac{d(d+1)}{2}$}
\begin{theorem}\label{thm4v33d}
Under the same assumption as in Theorem \ref{mainth3d}, let ${\bf u}_{1\alpha}$ be the solution to \eqref{equ_v13d} for $\alpha=d+1,\dots,\frac{d(d+1)}{2}$. Then for sufficiently small $0<\varepsilon<1/2$ and for $m\ge 1$, we have
\begin{align*}
|\nabla^{m}{\bf u}_{1\alpha}(x)| \le C\delta(x')^{-m/2}\quad\text{for}~x\in\Omega_{R}.
\end{align*}
\end{theorem}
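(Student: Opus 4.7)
The plan is to mirror the argument of Theorem \ref{thm3.3} (the two-dimensional rotation case), adapted to dimension $d\ge 3$ in the same way that the proof of Theorem \ref{thm4.1} adapts Theorem \ref{thm3.1}. The main structural gain, which is why the estimate improves from $\delta(x')^{-(m+1)/2}$ to $\delta(x')^{-m/2}$, comes from the fact that each rigid rotation basis vector $\boldsymbol{\psi}_\alpha=x_{i_\alpha}\boldsymbol{e}_{j_\alpha}-x_{j_\alpha}\boldsymbol{e}_{i_\alpha}$ has components that vanish to first order at the origin, so $|\boldsymbol{\psi}_\alpha|=O(|x|)=O(\delta(x')^{1/2})$ inside the neck $\Omega_{2R}$. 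This gains one factor of $\delta^{1/2}$ at every derivative level compared with the constant boundary data of Theorem \ref{thm4.1}.

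First I would take as base auxiliary function the linear interpolation
\begin{equation*}
{\bf v}_\alpha^1(x)=\frac{x_d+\varepsilon/2+h_2(x')}{\delta(x')}\,\boldsymbol{\psi}_\alpha,\qquad x\in\Omega_{2R},
\end{equation*}
which already satisfies ${\bf v}_\alpha^1={\bf u}_{1\alpha}$ on $\Gamma_{2R}^\pm$. A direct calculation, using \eqref{h1h14} together with $|\boldsymbol{\psi}_\alpha|\le C|x|\le C\delta(x')^{1/2}$, gives in $\Omega_{2R}$ the analogues of \eqref{v31dds}, namely $|\nabla_{x'}^k\partial_{x_d}^s({\bf v}_\alpha^1)^{(i)}|\le C\delta(x')^{(2-2s-k)/2}$ for the components proportional to the linear functions $x_{i_\alpha}$, $x_{j_\alpha}$, and the corresponding one-lower bound for the complementary component (which is linear in $x_d$). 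The point is that $\mathcal{L}_{\lambda,\mu}{\bf v}_\alpha^1$ has size $\delta(x')^{-1}$, exactly as in Theorem \ref{thm3.3}, but $|\nabla{\bf v}_\alpha^1|=O(\delta(x')^{-1/2})$, one factor better than in Theorem \ref{thm4.1}.

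Next, following the recipe of Theorem \ref{thm3.3}, I would build inductively for $l\ge 2$ an auxiliary function ${\bf v}_\alpha^l=(({\bf v}_\alpha^l)^{(1)},\dots,({\bf v}_\alpha^l)^{(d)})^T$ vanishing on $\Gamma_{2R}^\pm$ via the Green function $G(y,x_d)$ of \eqref{defgreenfunction3d}:
\begin{equation*}
({\bf v}_\alpha^l)^{(i)}(x)=\frac1\mu\int_{-\varepsilon/2-h_2(x')}^{\varepsilon/2+h_1(x')}G(y,x_d)\,({\bf f}_\alpha^{l-1})^{(i)}(x',y)\,dy,\qquad 1\le i\le d-1,
\end{equation*}
and an analogous formula for the $d$-th component involving the additional correction $(\lambda+\mu)\sum_{i<d}\partial_{x_ix_d}({\bf v}_\alpha^l)^{(i)}$, exactly as in \eqref{def_v1l2} and the higher-dimensional version in Section \ref{sec4}. (For $l=2$, the cancellation is arranged as in \eqref{fzl1-} so that $\mu\partial_{x_dx_d}({\bf v}_\alpha^2)^{(i)}$ kills the leading part of $({\bf f}_\alpha^1)^{(i)}$.) These choices force, via \eqref{3df1}--\eqref{3df2}, the identities $({\bf f}_\alpha^l)^{(i)}=\mu\Delta_{x'}({\bf v}_\alpha^l)^{(i)}+(\lambda+\mu)\partial_{x_i}(\nabla\cdot{\bf v}_\alpha^l)$ for $i<d$, and $({\bf f}_\alpha^l)^{(d)}=\mu\Delta_{x'}({\bf v}_\alpha^l)^{(d)}$, whose $x_d$-dependence is strictly polynomial of controlled degree.

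By induction on $l$, using estimate \eqref{3dgredgj} on the Green function and the vanishing of high $\partial_{x_d}$-derivatives on $G$, I would establish the size bounds (the rotation-case shift of \eqref{xydgj3d2})
\begin{equation*}
|\nabla_{x'}^k\partial_{x_d}^s({\bf v}_\alpha^l)^{(i)}|\le C\delta(x')^{(2l-2s-k-2)/2},\ s\le 2l-2,\qquad \partial_{x_d}^s({\bf v}_\alpha^l)^{(i)}=0,\ s\ge 2l-1,
\end{equation*}
for $1\le i\le d-1$, and the corresponding bounds shifted by $1$ for $({\bf v}_\alpha^l)^{(d)}$, valid in $\Omega_{2R}$. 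From these, one concludes $|{\bf f}_\alpha^m|\le C\delta(x')^{m-2}$ and $|\nabla^s{\bf f}_\alpha^m|\le C\delta(x')^{m-2-s}$ for $1\le s\le m-1$, exactly as in \eqref{f1l32d}. Applying the energy iteration of Proposition \ref{propnew} to ${\bf u}_{1\alpha}-\sum_{l=1}^m{\bf v}_\alpha^l$ on $\Omega_{\delta(x')/2}(x')$ produces $\|\nabla^m({\bf u}_{1\alpha}-\sum_{l=1}^m{\bf v}_\alpha^l)\|_{L^\infty}\le C$; combining this with the crucial rotation bound $|\nabla^m{\bf v}_\alpha^l|\le C\delta(x')^{-m/2}$ for $l\le (m+1)/2$ (and the lower-order bound $\delta(x')^{l-m-1}$ for larger $l$) yields the desired estimate $|\nabla^m{\bf u}_{1\alpha}|\le C\delta(x')^{-m/2}$.

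The main obstacle I anticipate is keeping the bookkeeping honest across the different subfamilies of $\boldsymbol{\psi}_\alpha$: when $\alpha\in\{d+1,\dots,d(d-1)/2+1\}$ the rotation does not involve $x_d$ (so $({\bf v}_\alpha^1)^{(d)}\equiv 0$ and some leading singularities present in the $d$-involving case simply vanish), while for $\alpha\in\{d(d-1)/2+2,\dots,d(d+1)/2\}$ the component $x_d$ appears and one must track the additional $\partial_{x_d}$ derivatives that hit $\boldsymbol{\psi}_\alpha$ itself, producing terms analogous to those creating the $\lambda+\mu$-correction in \eqref{def_v1l2}. In either case the Lam\'e coupling between the $x'$- and $x_d$-directions is controlled by the same Green-function construction used in Theorems \ref{thm3.3} and \ref{thm4.1}, and the key gain of one factor $\delta^{1/2}$ is preserved throughout the induction because $\boldsymbol{\psi}_\alpha$ is linear.
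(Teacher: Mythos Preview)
Your overall strategy---building auxiliary vectors ${\bf v}_\alpha^l$ by Green-function inversion, showing $|{\bf f}_\alpha^m|\le C\delta^{m-2}$, and then invoking Proposition~\ref{propnew}---is exactly what the paper does. The first auxiliary function ${\bf v}_\alpha^1$ and the observation that $|\boldsymbol\psi_\alpha|=O(\delta^{1/2})$ buys a half power are both correct.

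However, for $l\ge2$ you copy the ordering of Theorem~\ref{thm4.1}: first solve $({\bf v}_\alpha^l)^{(i)}$ from $({\bf f}_\alpha^{l-1})^{(i)}$ for $i<d$, then put the coupling correction into $({\bf v}_\alpha^l)^{(d)}$. For rotation data not involving $x_d$ (the representative case $\boldsymbol\psi_{d+1}$), the singularity hierarchy in ${\bf f}_\alpha^1$ is \emph{reversed} compared with the constant case: here $|({\bf f}_\alpha^1)^{(i)}|\le C\delta^{-1/2}$ while $|({\bf f}_\alpha^1)^{(d)}|\le C\delta^{-1}$, because $({\bf v}_\alpha^1)^{(d)}\equiv0$ and the whole divergence term $(\lambda+\mu)\partial_{x_d}(\nabla\cdot{\bf v}_\alpha^1)$ lands in the $d$-th slot. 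With your ordering, $({\bf v}_\alpha^2)^{(d)}$ is built from a source of size $\delta^{-1}$, so $|\partial_{x_ix_d}({\bf v}_\alpha^2)^{(d)}|\le C\delta^{-1/2}$; this term reappears in $({\bf f}_\alpha^2)^{(i)}=\mu\Delta_{x'}({\bf v}_\alpha^2)^{(i)}+(\lambda+\mu)\partial_{x_i}(\nabla\cdot{\bf v}_\alpha^2)$ and forces $|({\bf f}_\alpha^2)^{(i)}|\le C\delta^{-1/2}$, the same order as $({\bf f}_\alpha^1)^{(i)}$. Consequently your claimed inductive bounds (the ``rotation-case shift'' of \eqref{xydgj3d2}) do not close: already $|({\bf v}_\alpha^3)^{(i)}|$ stays at $C\delta^{3/2}$ rather than the $C\delta^{2}$ your formula predicts, and $|{\bf f}_\alpha^m|\le C\delta^{m-2}$ is not attainable.

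The paper's remedy is to \emph{swap the order}: define first
\[
({\bf v}_\alpha^l)^{(d)}=\frac{1}{\lambda+2\mu}\int G(y,x_d)\,({\bf f}_\alpha^{l-1})^{(d)}(x',y)\,dy,
\]
and then, for $1\le i\le d-1$,
\[
({\bf v}_\alpha^l)^{(i)}=\frac{1}{\mu}\int G(y,x_d)\Big(({\bf f}_\alpha^{l-1})^{(i)}+(\lambda+\mu)\partial_{x_ix_d}({\bf v}_\alpha^l)^{(d)}\Big)(x',y)\,dy.
\]
This choice yields $({\bf f}_\alpha^l)^{(d)}=(\lambda+\mu)\sum_{i<d}\partial_{x_ix_d}({\bf v}_\alpha^l)^{(i)}$ and $({\bf f}_\alpha^l)^{(i)}=\mu\Delta_{x'}({\bf v}_\alpha^l)^{(i)}+(\lambda+\mu)\sum_{j<d}\partial_{x_ix_j}({\bf v}_\alpha^l)^{(j)}$, involving only tangential derivatives, and gives the correct bounds $|\nabla_{x'}^k\partial_{x_d}^s({\bf v}_\alpha^l)^{(i)}|\le C\delta^{(2l-2s-k-1)/2}$ (for $s\le 2l-1$) and $|\nabla_{x'}^k\partial_{x_d}^s({\bf v}_\alpha^l)^{(d)}|\le C\delta^{(2l-2s-k-2)/2}$ (for $s\le 2l-2$). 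With these, $|{\bf f}_\alpha^m|\le C\delta^{m-2}$ and the rest of your argument goes through verbatim.
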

\begin{proof}
We only prove the case of $\boldsymbol{\psi}_{d+1}=(x_2,-x_1,0,\dots,0)^{\mathrm{T}}$, since other cases are similar.   

{\bf Step 1.} We define
\begin{equation*}
	{\bf v}_{\alpha}^{1}(x)=\frac{x_d+\varepsilon/2+h_2(x')}{\delta(x')}{\boldsymbol\psi}_{\alpha} \quad\text{for}~x\in\Omega_{2R}.
\end{equation*}
By direct calculations, we have
\begin{equation}\label{gjds13d}
\begin{split}
 &|\nabla_{x'}^k({\bf v}_{\alpha}^{1})^{(1)}|, |\nabla_{x'}^k({\bf v}_{\alpha}^{1})^{(2)}|\leq C\delta(x')^{-(k-1)/2},\quad\quad\quad\quad 0\le k\le m+1,\\
 & |\nabla_{x'}^{k-1}\partial_{x_{d}}({\bf v}_{\alpha}^{1})^{(1)}|, |\nabla_{x'}^{k-1}\partial_{x_{d}}({\bf v}_{\alpha}^{1})^{(2)}|\leq C\delta(x')^{-k/2},\quad 1\le k\le m+1,
 \end{split}
\end{equation}
and 
\begin{equation}\label{gjds13d1}
\partial_{x_{d}x_d}({\bf v}_{\alpha}^{1})^{(1)}=\partial_{x_{d}x_d}({\bf v}_{\alpha}^{1})^{(2)}=0.
\end{equation}

Denote ${\bf f}_{\alpha}^{1}(x):=\mathcal{L}_{\lambda,\mu}{\bf v}_{\alpha}^{1}(x)$, so that
\begin{equation*}
({\bf f}_{\alpha}^{1})^{(1)}=\mu \Delta({\bf v}_{\alpha}^{1})^{(1)}+(\lambda+\mu)\partial_{x_{1}}(\nabla\cdot{\bf v}_{\alpha}^{1}),\quad ({\bf f}_{\alpha}^{1})^{(2)}=\mu \Delta({\bf v}_{\alpha}^{1})^{(2)}+(\lambda+\mu)\partial_{x_{2}}(\nabla\cdot{\bf v}_{\alpha}^{1}),
\end{equation*}
and for $j\ge3$,
\begin{equation*}
({\bf f}_{\alpha}^{1})^{(j)}=(\lambda+\mu)\partial_{x_{j}}(\nabla\cdot{\bf v}_{\alpha}^{1}).
\end{equation*}
It follows from \eqref{gjds13d} and \eqref{gjds13d1} that, for $1\le i<d$,
\begin{equation*}
|({\bf f}_{\alpha}^{1})^{(i)}(x)|\leq C\delta(x')^{-1/2},\quad |({\bf f}_{\alpha}^{1})^{(d)}(x)|\leq C\delta(x')^{-1}.
\end{equation*}
Thus,
\begin{equation}\label{f113d1}
|{\bf f}_{\alpha}^{1}(x)|\leq C\sum_{i=1}^{d}|({\bf f}_{\alpha}^{1})^{(i)}(x)|\leq C\delta(x')^{-1}.
\end{equation}

By using \eqref{f113d1} and Proposition \ref{propnew}, we have
\begin{align*}
	\|\nabla\big({\bf u}_{\alpha}-({\bf v}_{\alpha}^{11}+{\bf v}_{\alpha}^{12})\big)\|_{L^\infty(\Omega_{\delta(x')/2}(x'))}\leq C.
\end{align*}
By virtue of  \eqref{gjds13d} for $k=0$, we conclude that Theorem \ref{thm4v33d} holds true for $m=1$, that is,
\begin{equation*}
	|\nabla {\bf u}_{\alpha}(x)|\leq |\nabla {\bf v}_{\alpha}^{1}(x)|+C\leq C\delta(x')^{-1/2}.
\end{equation*}

{\bf Step 2.} For $l\ge2$, denote 
$${\bf f}_{\alpha}^{l}(x):=\sum_{j=1}^{l}\mathcal{L}_{\lambda,\mu}{\bf v}_{\alpha}^{j}(x)= {\bf f}_{\alpha}^{l-1}(x)+\mathcal{L}_{\lambda,\mu}{\bf v}_{\alpha}^{l}(x).$$ 
We choose ${\bf v}_{\alpha}^{l}$ satisfying ${\bf v}_{\alpha}^{l}(x)=0$ on $\Gamma^+_{2R}\cup\Gamma^-_{2R}$, and 
\begin{equation*}
	({\bf v}_{\alpha}^{l})^{(d)}(x)=\frac{1}{\lambda+2\mu}\int_{-\varepsilon/2-h_2(x')}^{\varepsilon/2+h_1(x')} G(y,x_d) ({\bf f}_{\alpha}^{l-1})^{(d)}(x',y) dy\quad\text{in}~\Omega_{2R},
\end{equation*}
and for $1\le i\le d-1$,
\begin{equation*}
	({\bf v}_{_{\alpha}}^{l})^{(i)}(x)=\frac{1}{\mu}\int_{-\varepsilon/2-h_2(x')}^{\varepsilon/2+h_1(x')} G(y,x_d) \Big(({\bf f}_{\alpha}^{l-1})^{(i)}+(\lambda+\mu)\partial_{x_{i}x_d}({\bf v}_{\alpha}^{l})^{(d)}\Big)(x',y) dy\quad\text{in}~\Omega_{2R}.
\end{equation*}
Then, we have
\begin{equation*}
(\lambda+2\mu)\partial_{x_dx_d}({\bf v}_{\alpha}^{l})^{(d)}=-({\bf f}_{\alpha}^{l-1})^{(d)}\quad\text{in}~\Omega_{2R},
\end{equation*}
and for $1\le i\le d-1$,
\begin{equation*}
	\mu\partial_{x_dx_d}({\bf v}_{1}^{l})^{(i)}=-({\bf f}_{\alpha}^{l-1})^{(i)}-(\lambda+\mu)\partial_{x_{i}x_d}({\bf v}_{\alpha}^{l})^{(d)}\quad\text{in}~\Omega_{2R}.
\end{equation*}
Thus, for $1\le i\le d-1$,
\begin{equation}\label{f3d1k}
	\begin{split}
({\bf f}_{\alpha}^{l})^{(d)}&= ({\bf f}_{\alpha}^{l-1})^{(d)}+(\lambda+2\mu)\partial_{x_{d}x_d}({\bf v}_{\alpha}^{l})^{(d)}+(\lambda+\mu)\sum_{i=1}^{d-1}\partial_{x_ix_d}({\bf v}_{\alpha}^{l})^{(i)}\\
&=(\lambda+\mu)\sum_{i=1}^{d-1}\partial_{x_ix_d}({\bf v}_{\alpha}^{l})^{(i)},\\
({\bf f}_{\alpha}^{l})^{(i)}&=({\bf f}_{\alpha}^{l-1})^{(i)}+\mu\Delta ({\bf v}_{\alpha}^{l})^{(i)}+(\lambda+\mu)\partial_{x_{i}}(\nabla\cdot {\bf v}_{\alpha}^{l})\\
&=\mu\Delta_{x'} ({\bf v}_{\alpha}^{l})^{(i)}+(\lambda+\mu)\sum_{j=1}^{d-1}\partial_{x_ix_j}({\bf v}_{\alpha}^{l})^{(i)}.
\end{split}
\end{equation}

As in the proof of Theorem \ref{thm3.1}, by induction we have, for any $1\le l\le m$ and $1\le i\le d-1$,
\begin{equation*}
	|({\bf v}_{\alpha}^{l})^{(i)}(x) | \le C\delta(x')^{(2l-1)/2},\quad|({\bf v}_{\alpha}^{l})^{(d)}(x) | \le C\delta(x')^{l-1}\quad\text{in}~\Omega_{2R},
\end{equation*}
and for $k\ge 0$, in $\Omega_{2R}$,
\begin{align}\label{gj1}
	\begin{split}
	&|\nabla_{x'}^k\partial_{x_d}^s({\bf v}_{\alpha}^{l})^{(i)}(x) | \le C\delta(x')^{\frac{2l-2s-k-1}{2}}~\text{for}~0\le s\le 2l-1,\\
	&\partial_{x_d}^s({\bf v}_{\alpha}^{l})^{(i)}(x)=0~\text{for}~ s\ge 2l,\\
	&|\nabla_{x'}^k\partial_{x_d}^s({\bf v}_{\alpha}^{l})^{(d)}(x) | \le C\delta(x')^{\frac{2l-2s-k-2}{2}}~\text{for}~0\le s\le 2l-2,\\
	&\partial_{x_d}^s({\bf v}_{\alpha}^{l})^{(d)}(x)=0~\text{for}~ s\ge 2l-1.
\end{split}
\end{align}
Thus, combining \eqref{gj1} with \eqref{f3d1k} yields
\begin{equation}\label{f3dgj111}
	|{\bf f}_{\alpha}^{m}(x)|\leq C\delta(x')^{m-2}\quad\text{in}~\Omega_{2R},
\end{equation}
and similarly, for $s\le m-1$,
\begin{equation}\label{f3dgj2}
|\nabla^s{\bf f}_{\alpha}^{m}(x)|\leq C\delta(x')^{m-2-s}\quad\text{in}~\Omega_{2R}.
\end{equation}
By using \eqref{f3dgj111}, \eqref{f3dgj2}, and Proposition \ref{propnew}, we obtain, for $x\in\Omega_{R}$,
$$\Big\|\nabla^{m}\Big({\bf u}_{\alpha}-\sum_{l=1}^{m}{\bf v}_{\alpha}^{l}\Big)\Big\|_{L^\infty(\Omega_{\delta(x')/2}(x'))}\leq C.$$

By virtue of \eqref{gj1}, 
\begin{align*}
|\nabla^m{\bf v}_{\alpha}^{l}(x)|\leq C\delta(x')^{-m/2}~\text{for}~l\le \frac{m+1}{2},\quad 	|\nabla^m{\bf v}_{\alpha}^{l}(x)|\leq C\delta(x')^{l-m-1}~\text{for}~l\ge \frac{m+2}{2}.
\end{align*}
Thus, 
\begin{equation*}
	|\nabla^{m}{\bf u}_{\alpha}(x) | \leq \Big|\sum_{l=1}^{m}\nabla^{m}{\bf v}_{\alpha}^{l}(x)\Big| +C\leq C\delta(x')^{-m/2}\quad\mbox{for}~x\in \Omega_R.
\end{equation*}
Therefore, Theorem \ref{thm4v33d} is proved.
\end{proof}

\subsection{The Completion of the Proof of Theorem \ref{mainth3d}}
As in Section \ref{sec3}, we decompose the solution to \eqref{maineqn} as follows
\begin{equation}\label{decom_u3d}
{\bf u}(x)=\sum_{i=1}^{2}\sum_{\alpha=1}^{d(d+1)/2}C_i^{\alpha}{\bf u}_{i\alpha}(x)+{\bf u}_{0}(x),\quad x\in\,\Omega,
\end{equation}
where ${\bf u}_{i\alpha},{\bf u}_{0}\in{C}^{2}(\Omega;\mathbb R^d)$, respectively, satisfying
\begin{equation}\label{equuia3d}
\begin{cases}
\mathcal{L}_{\lambda,\mu}{\bf u}_{i\alpha}=0&\mathrm{in}~\Omega,\\
{\bf u}_{i\alpha}=\boldsymbol{\psi}_{\alpha}&\mathrm{on}~\partial{D}_{i},\\
{\bf u}_{i\alpha}=0&\mathrm{on}~\partial{D_{j}}\cup\partial{D},~j\neq i,
\end{cases}
\quad i=1,2,~\alpha=1,2,\dots,\frac{d(d+1)}{2},
\end{equation}
and
\begin{equation}\label{equu03d}
\begin{cases}
\mathcal{L}_{\lambda,\mu}{\bf u}_{0}=0&\mathrm{in}~\Omega,\\
{\bf u}_{0}=0&\mathrm{on}~\partial{D}_{1}\cup\partial{D_{2}},\\
{\bf u}_{0}=\boldsymbol{\varphi}&\mathrm{on}~\partial{D}.
\end{cases}
\end{equation}
We write
\begin{equation}\label{nablau_dec3d}
\nabla^{m}{\bf u}(x)=\sum_{\alpha=1}^{d(d+1)/2}\left(C_{1}^{\alpha}-C_{2}^{\alpha}\right)\nabla^{m}{\bf u}_{1\alpha}(x)+\nabla^{m} {\bf u}_{b}(x)\quad\mbox{in}~\Omega,
\end{equation}
where ${\bf u}_{b}:=\sum_{\alpha=1}^{d(d+1)/2}C_{2}^{\alpha}({\bf u}_{1\alpha}+{\bf u}_{2\alpha})+{\bf u}_{0}$. 

It is obvious that ${\bf u}_{1\alpha}+{\bf u}_{2\alpha}-\boldsymbol{\psi}_\alpha=0$ on $\partial D_1\cup \partial D_2$. We have the following results, which is from \cite[Theorem 1.1]{llby}.
\begin{theorem}
Let ${\bf u}_{i\alpha}$ and  ${\bf u}_{0}$ be the solution to \eqref{equuia3d} and \eqref{equu03d}, $i=1,2$, $\alpha=1,2,\dots,d$. Then we have, for $m\ge 1$,
\begin{align}\label{ubgj3d}
|\nabla^m({\bf u}_{1\alpha}+{\bf u}_{2\alpha})(x)|,\quad\quad|\nabla^m{\bf u}_{0}(x)|\leq Ce^{-\frac{C}{\sqrt{\delta(x')}}} \quad \text{for}~x\in\Omega_{R},
\end{align}
and, for $\alpha=d+1,d+2,\dots,\frac{d(d+1)}{2}$,
\begin{align}\label{ubgj3d1}
|\nabla({\bf u}_{1\alpha}+{\bf u}_{2\alpha})(x)|\leq 1+ Ce^{-\frac{C}{\sqrt{\delta(x')}}},\quad|\nabla^{m+1}({\bf u}_{1\alpha}+{\bf u}_{2\alpha})(x)|\leq Ce^{-\frac{C}{\sqrt{\delta(x')}}} \quad \text{for}~x\in\Omega_{R}.
\end{align}
\end{theorem}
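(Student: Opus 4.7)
The plan is to unify all the bounds by treating a single auxiliary function ${\bf w}$, taken to be either ${\bf u}_{0}$ or ${\bf u}_{1\alpha}+{\bf u}_{2\alpha}-\boldsymbol{\psi}_\alpha$ for $1\le\alpha\le\tfrac{d(d+1)}{2}$. In each case ${\bf w}$ satisfies $\mathcal{L}_{\lambda,\mu}{\bf w}=0$ in $\Omega$ and ${\bf w}=0$ on $\partial D_1\cup\partial D_2$, since ${\bf e}(\boldsymbol{\psi}_\alpha)\equiv 0$ gives $\mathcal{L}_{\lambda,\mu}\boldsymbol{\psi}_\alpha=0$ and $\boldsymbol{\psi}_\alpha$ is the common boundary value of ${\bf u}_{i\alpha}$ on $\partial D_i$. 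A global energy argument combined with elliptic regularity away from the contact region gives $\|{\bf w}\|_{L^{\infty}(\Omega)}+\|\nabla^{m+1}{\bf w}\|_{L^{\infty}(\Omega\setminus\Omega_R)}\le C$. Once the pointwise bound $|\nabla^{m}{\bf w}(x)|\le Ce^{-c/\sqrt{\delta(x')}}$ has been established in $\Omega_{R}$, the inequalities \eqref{ubgj3d} and \eqref{ubgj3d1} follow from $\nabla^{k}({\bf u}_{1\alpha}+{\bf u}_{2\alpha})=\nabla^{k}{\bf w}+\nabla^{k}\boldsymbol{\psi}_\alpha$: for $1\le\alpha\le d$ one has $\nabla^{k}\boldsymbol{\psi}_\alpha\equiv 0$ for every $k\ge 1$ and \eqref{ubgj3d} follows; for $d<\alpha\le\tfrac{d(d+1)}{2}$ the linearity of $\boldsymbol{\psi}_\alpha$ gives $|\nabla\boldsymbol{\psi}_\alpha|\le 1$ and $\nabla^{k}\boldsymbol{\psi}_\alpha\equiv 0$ for $k\ge 2$, producing \eqref{ubgj3d1}.

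The core step is a Saint-Venant type $L^{2}$ decay for $\nabla{\bf w}$ in the neck. Testing the weak form of $\mathcal{L}_{\lambda,\mu}{\bf w}=0$ with $\eta(x')^{2}{\bf w}$ for a transverse cutoff with $\eta\equiv 1$ on $\{|x'|<r\}$, $\operatorname{supp}\eta\subset\{|x'|<s\}$, and $|\nabla_{x'}\eta|\le C/(s-r)$, then invoking Korn's inequality on each vertical fiber (its constant is uniform because the two-sided Dirichlet condition on $\Gamma_{2R}^{\pm}$ kills the fiberwise rigid motions), yields the Caccioppoli bound
\begin{align*}
\int_{\Omega_r}|\nabla{\bf w}|^{2}\,dx\le\frac{C}{(s-r)^{2}}\int_{\Omega_s\setminus\Omega_r}|{\bf w}|^{2}\,dx.
\end{align*}
Combined with the fiberwise Poincaré inequality $\int|{\bf w}|^{2}\,dx_d\le C\delta(x')^{2}\int|\partial_{x_d}{\bf w}|^{2}\,dx_d$ (valid since ${\bf w}=0$ on $\Gamma_{2R}^{\pm}$) and with $\delta(x')\le C(\varepsilon+s^{2})$ on $\{|x'|\le s\}$, the monotone function $F(t):=\int_{\Omega_t}|\nabla{\bf w}|^{2}\,dx$ satisfies
\begin{align*}
F(r)\le\frac{C(\varepsilon+s^{2})^{2}}{(s-r)^{2}}\bigl(F(s)-F(r)\bigr).
\end{align*}

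Iterate by setting $r_0:=|x'|$ and $r_{k+1}:=r_k+c_{0}\delta(r_k)$ with $c_{0}$ small enough that the prefactor above is bounded; the rearranged inequality then gives $F(r_k)\le\theta F(r_{k+1})$ for some $\theta\in(0,1)$ independent of $k$ and $\varepsilon$. Iterating until $r_N\ge R/2$ (where $F(R/2)\le C$ by the global energy bound) yields $F(|x'|)\le C\theta^{N}$, and the elementary computation
\begin{align*}
N\sim\int_{|x'|}^{R/2}\frac{dt}{\delta(t)}\sim\frac{1}{\sqrt{\varepsilon+|x'|^{2}}}=\frac{1}{\sqrt{\delta(|x'|)}}
\end{align*}
produces the $L^{2}$ exponential decay of $\nabla{\bf w}$ on a local cylinder around $x'$. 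To pass to pointwise bounds on $\nabla^{m}{\bf w}$, rescale the local cylinder $\Omega_{\delta}(\bar x')$ by the anisotropic change of variables $(y',y_d):=\bigl((x'-\bar x')/\sqrt{\delta},\,x_d/\delta\bigr)$, flattening it to a region of uniformly bounded geometry where the Lamé operator keeps uniform ellipticity; Agmon-Douglis-Nirenberg style $W^{k,p}$ (or Schauder) estimates then give $\|\nabla^{m}{\bf w}\|_{L^{\infty}(\Omega_{\delta/4}(\bar x'))}\le C\delta^{-m/2}\|\nabla{\bf w}\|_{L^{2}(\Omega_{\delta/2}(\bar x'))}$, and the polynomial factor $\delta^{-m/2}$ is absorbed into the exponential $e^{-c/\sqrt{\delta}}$ at the cost of a slightly smaller constant in the exponent.

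The main technical obstacle is the Caccioppoli step: the Lamé bilinear form controls only ${\bf e}({\bf w})$ rather than $\nabla{\bf w}$, so a uniform Korn inequality is needed as the channel degenerates. This uniformity is guaranteed precisely because ${\bf w}$ vanishes on both $\Gamma_{2R}^{\pm}$, which removes the rigid-motion degeneracy on each vertical fiber and gives an unweighted fiberwise Poincaré-Korn estimate that integrates to the desired bound uniformly in $\varepsilon$. A secondary concern is verifying that the contraction constant $\theta$ in the iteration is genuinely bounded away from $1$ independently of $k$ and $\varepsilon$; this is ensured by choosing the step ratio $c_{0}$ once and for all, since $\delta(r_{k+1})\le 2\delta(r_k)$ for $c_{0}$ small.
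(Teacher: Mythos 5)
Your overall route is in substance the same as the paper's: the paper simply observes that ${\bf w}:={\bf u}_{1\alpha}+{\bf u}_{2\alpha}-\boldsymbol{\psi}_{\alpha}$ (or ${\bf u}_0$) solves the homogeneous Lam\'e system and vanishes on $\partial D_1\cup\partial D_2$, and then quotes \cite[Theorem 1.1]{llby} for the exponential decay; your energy-iteration argument is precisely the mechanism behind that cited result and behind the paper's own Appendix (Caccioppoli obtained from the first Korn inequality applied to $\eta{\bf w}\in H^1_0(\Omega_s)$ — this is the clean way to justify your "uniform Korn" step, rather than a fiberwise Korn inequality — followed by the fiberwise Poincar\'e inequality, hole-filling, and $N\sim\int_{|x'|}^{R/2}dt/\delta(t)\sim\delta(x')^{-1/2}$ steps). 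Two cosmetic repairs: start the iteration at $r_0=|x'|+\delta(x')$ (or $2|x'|$), since $F(|x'|)$ controls the cylinder $\{|y'|<|x'|\}$, which does not contain a neighborhood of the point $x$; and the global input $\|{\bf w}\|_{L^\infty(\Omega_{2R}\setminus\Omega_R)}+F(R/2)\le C$ rests on the known $L^\infty$ bound for ${\bf u}_{i\alpha}$ from \cite{bll1,bll2}, which you assert but do not justify — acceptable, since it is standard in this literature, but worth a citation.

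The one step that would fail as written is the passage from $L^2$ decay to pointwise bounds via the anisotropic change of variables $(y',y_d)=\bigl((x'-\bar x')/\sqrt{\delta},\,x_d/\delta\bigr)$. That scaling does \emph{not} keep the Lam\'e operator uniformly elliptic: after normalizing, the coefficients of the horizontal second derivatives are $O(\delta)$ and of the mixed ones $O(\sqrt{\delta})$, so the rescaled operator degenerates to an ODE in $y_d$ and the ADN/Schauder constants blow up; moreover the image of a $\sqrt{\delta}$-size horizontal slab is not of uniformly bounded geometry, because $\delta(x')$ varies by an unbounded factor across it when $\varepsilon\ll\delta(\bar x')$ (the rescaled domain still contains a thin neck of relative thickness $\varepsilon/\delta(\bar x')$). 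The correct local step is the isotropic scaling by $\delta(\bar x')$ on the small cylinder $\Omega_{\delta(\bar x')}(\bar x')$, over which $\delta(x')$ is comparable to $\delta(\bar x')$; this is exactly Proposition \ref{wlinfty}/\ref{propnew} of the paper (with ${\bf f}=0$) and yields $\|\nabla^{m}{\bf w}\|_{L^\infty(\Omega_{\delta/2}(\bar x'))}\le C\delta^{1-m-d/2}\|\nabla{\bf w}\|_{L^2(\Omega_{\delta}(\bar x'))}$. Since your iteration controls $\|\nabla{\bf w}\|_{L^2(\Omega_{\delta}(\bar x'))}\le Ce^{-c/\sqrt{\delta}}$, the polynomial prefactor is absorbed into the exponential exactly as you intended, and the theorem (including the extra additive $1$ from $|\nabla\boldsymbol{\psi}_\alpha|$ in \eqref{ubgj3d1}) follows. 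With that single replacement your proof is complete and coincides with the argument underlying the citation.
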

For these constants $C_i^\alpha$, the following result is from \cite[Proposition 4.1]{bll2}.
\begin{lemma}\label{cdgj3d}
Let $C_i^\alpha$ be defined in \eqref{decom_u}. Then
\begin{align*}
|C_i^\alpha|\leq C, \quad i=1,2; \quad\alpha=1,2\dots,\frac{d(d+1)}{2},
\end{align*}
and for $d=3$,
\begin{align*}
|C_1^\alpha-C_2^\alpha|\leq \frac{C}{|\log\varepsilon|}, \quad\alpha=1,2,3.
\end{align*}
\end{lemma}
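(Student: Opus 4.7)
The plan is to recover the constants $C_i^\alpha$ through two complementary mechanisms: an $H^1$ energy argument yielding the uniform bound $|C_i^\alpha|\le C$, and a finite-dimensional linear system derived from the integral constraints $\int_{\partial D_i}\partial_\nu\mathbf{u}|_+\cdot\boldsymbol{\psi}_\alpha=0$, from which the refined $|\log\varepsilon|^{-1}$ bound on $|C_1^\alpha-C_2^\alpha|$ for the translation indices $\alpha=1,2,3$ in dimension three will emerge.

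For the uniform bound, I would proceed as follows. Since $\mathbf{e}(\mathbf{u})=0$ in each $D_i$, the restriction $\mathbf{u}|_{D_i}$ is a rigid displacement, and by the decomposition \eqref{decom_u3d} together with the boundary data of each $\mathbf{u}_{j\beta}$ we have $\mathbf{u}|_{D_i}=\sum_\alpha C_i^\alpha\boldsymbol{\psi}_\alpha$. The standard energy estimate $\|\mathbf{u}\|_{H^1(D)}\le C\|\boldsymbol{\varphi}\|_{H^{1/2}(\partial D)}$ holds uniformly in $\varepsilon$ because the inclusions have fixed size and stay away from $\partial D$, hence $\|\mathbf{u}\|_{L^2(D_i)}\le C$. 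On the fixed bounded set $D_i$, the basis $\{\boldsymbol{\psi}_\alpha\}$ of $\Psi$ spans a finite-dimensional subspace on which all norms are equivalent, so $|C_i^\alpha|\le C\|\mathbf{u}\|_{L^2(D_i)}\le C$.

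For the refined difference bound, I would test the integral conditions against the $\mathbf{u}_{j\beta}$'s. Using Green's identity for the Lamé operator and the fact that $\mathbf{u}_{i\alpha}=\boldsymbol{\psi}_\alpha$ on $\partial D_i$ and vanishes elsewhere, the relations $\int_{\partial D_i}\partial_\nu\mathbf{u}|_+\cdot\boldsymbol{\psi}_\alpha=0$ become the symmetric linear system
\begin{equation*}
\sum_{j=1}^{2}\sum_{\beta}a_{i\alpha,j\beta}\,C_j^\beta=b_{i\alpha},\quad a_{i\alpha,j\beta}:=\int_\Omega(\mathbb{C}^0\mathbf{e}(\mathbf{u}_{j\beta})):\mathbf{e}(\mathbf{u}_{i\alpha}),\quad b_{i\alpha}:=-\int_\Omega(\mathbb{C}^0\mathbf{e}(\mathbf{u}_0)):\mathbf{e}(\mathbf{u}_{i\alpha}).
\end{equation*}
The quantitative input from Theorem \ref{thm4.1} (and the explicit profile $\mathbf{v}_1^1$ in \eqref{defv113d}--\eqref{defv1123d}) gives the diagonal asymptotics for $d=3$, $\alpha=1,2,3$:
\begin{equation*}
a_{i\alpha,i\alpha}=\int_\Omega(\mathbb{C}^0\mathbf{e}(\mathbf{v}_1^1)):\mathbf{e}(\mathbf{v}_1^1)+O(1)\sim\int_{|x'|<R}\frac{dx'}{\delta(x')}\sim|\log\varepsilon|.
\end{equation*}
Simultaneously, the off-diagonal pairing satisfies $a_{1\alpha,2\alpha}=-|\log\varepsilon|+O(1)$ because $\mathbf{u}_{1\alpha}+\mathbf{u}_{2\alpha}-\boldsymbol{\psi}_\alpha$ vanishes on $\partial D_1\cup\partial D_2$ and by the exponential decay estimate \eqref{ubgj3d} of \cite{llby} has bounded Dirichlet energy; hence $\int_\Omega\mathbb{C}^0\mathbf{e}(\mathbf{u}_{1\alpha}+\mathbf{u}_{2\alpha}):\mathbf{e}(\mathbf{u}_{1\alpha})=O(1)$, which together with the diagonal asymptotics forces $a_{1\alpha,2\alpha}=-a_{1\alpha,1\alpha}+O(1)$. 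Cross-index terms $a_{i\alpha,j\beta}$ with $\alpha\ne\beta\in\{1,2,3\}$ and all pairings involving a rotation index $\alpha\in\{4,5,6\}$ contribute only $O(1)$ by the milder bound in Theorem \ref{thm4v33d}, and $|b_{i\alpha}|\le C$ because $\|\mathbf{u}_0\|_{H^1}\le C$ uniformly in $\varepsilon$.

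The translation block thus has the form
\begin{equation*}
|\log\varepsilon|\begin{pmatrix}A&-A\\-A&A\end{pmatrix}\begin{pmatrix}C_1\\C_2\end{pmatrix}+E\begin{pmatrix}C_1\\C_2\end{pmatrix}=\mathbf{b},
\end{equation*}
with $A$ an $O(1)$ positive definite $3\times3$ matrix, $\|E\|,\|\mathbf{b}\|\le C$, and the couplings to the rotation block only $O(1)$ times the already-uniformly-bounded $C_i^\beta$ for $\beta\ge 4$. Subtracting the two block rows gives $2|\log\varepsilon|\,A(C_1-C_2)=O(1)$, and inverting the $O(1)$ matrix $A$ yields $|C_1^\alpha-C_2^\alpha|\le C/|\log\varepsilon|$ for $\alpha=1,2,3$. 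Summing the rows instead yields only $E(C_1+C_2)=O(1)$, consistent with the coarser first estimate.

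The main obstacle will be establishing the cancellation identity $a_{1\alpha,2\alpha}=-|\log\varepsilon|+O(1)$ with sufficient precision. This requires that the singular profiles $\mathbf{v}_1^1$ constructed for $\mathbf{u}_{1\alpha}$ and for $\mathbf{u}_{2\alpha}$ are exact negatives of each other to leading order, which in turn hinges on the boundedness of $\nabla(\mathbf{u}_{1\alpha}+\mathbf{u}_{2\alpha})$ in the neck region (in fact the exponential decay \eqref{ubgj3d}). Once this cancellation is secured, the rest is linear algebra with uniformly invertible $O(1)$ blocks.
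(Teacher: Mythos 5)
The paper itself does not prove this lemma: it is quoted from \cite[Proposition 4.1]{bll2}, and your proposal reconstructs essentially that argument (uniform bound via the energy/rigid-motion identification; refined bound via the linear system obtained by testing $\int_{\partial D_i}\frac{\partial {\bf u}}{\partial\nu}\big|_{+}\cdot\boldsymbol{\psi}_\alpha=0$ against the decomposition \eqref{decom_u3d}, with diagonal ``capacity'' entries of order $|\log\varepsilon|$ and the cancellation $a_{1\alpha,2\alpha}=-a_{1\alpha,1\alpha}+O(1)$ coming from the exponential decay of $\nabla({\bf u}_{1\alpha}+{\bf u}_{2\alpha})$ in the neck). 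The overall skeleton is sound and matches the cited source.

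Two of your justifications, however, do not hold up as written. First, for the cross terms with two \emph{distinct translation} indices $\alpha\neq\beta\in\{1,2,3\}$ you appeal to Theorem \ref{thm4v33d}, but that theorem only concerns the rotational data $\alpha\ge d+1$; for two translations the only bound it leaves you is $|\nabla {\bf u}_{1\alpha}|,|\nabla {\bf u}_{1\beta}|\le C\delta^{-1}$ from Theorem \ref{thm4.1}, and Cauchy--Schwarz then gives $|a_{1\alpha,1\beta}|\le C|\log\varepsilon|$, not $O(1)$. The $O(1)$ bound is true, but it needs the structure of the singular profiles: by the proof of Theorem \ref{thm4.1}, $\nabla {\bf u}_{1\alpha}=\frac{1}{\delta(x')}\,\boldsymbol{e}_\alpha\otimes \boldsymbol{e}_d+O(\delta(x')^{-1/2})$ in $\Omega_R$, so the leading strains for $\alpha\neq\beta$ are trace-free and Frobenius-orthogonal, whence $(\mathbb{C}^0{\bf e}({\bf u}_{1\alpha}),{\bf e}({\bf u}_{1\beta}))=O(\delta^{-3/2})$, which integrates to $O(1)$ over the neck when $d=3$. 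Without some such cancellation (or a proof that the normalized translation block converges to a positive definite matrix), your matrix $A$ could a priori degenerate and the inversion step fails. Second, $|b_{i\alpha}|\le C$ does not follow from $\|{\bf u}_0\|_{H^1}\le C$ alone: Cauchy--Schwarz against $\|{\bf e}({\bf u}_{i\alpha})\|_{L^2}\sim|\log\varepsilon|^{1/2}$ only yields $O(|\log\varepsilon|^{1/2})$, which would spoil the final rate; you need the pointwise decay $|\nabla {\bf u}_0|\le Ce^{-C/\sqrt{\delta(x')}}$ in the neck from \eqref{ubgj3d}. Relatedly, the lower bound $a_{1\alpha,1\alpha}\ge c|\log\varepsilon|$ hidden in your ``$\sim$'' should be made explicit (it follows from the dominant entry $\partial_{x_d}({\bf v}_1^1)^{(1)}=\delta(x')^{-1}$ of the explicit profile), since $\|A^{-1}\|=O(1)$ rests on it. All three repairs use only material already available in the paper or in \cite{bll2}, so the proposal is correct in outline but incomplete as justified.
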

We are now in a position to prove Theorem \ref{mainth3d}.
\begin{proof}[Proof of Theorem \ref{mainth3d}]
For $d=3$, by virtue of \eqref{nablau_dec3d}--\eqref{ubgj3d1}, Lemma \ref{cdgj3d} and Theorems \ref{thm4.1} and \ref{thm4v33d}, we have, for $x\in\Omega_{R}$,
\begin{align*}
|\nabla^{m}{\bf u}(x)|\leq&\, \sum_{\alpha=1}^3|C_1^\alpha-C_2^\alpha||\nabla^{m} {\bf u}_{1\alpha}(x)|+\sum_{\alpha=4}^6C|\nabla^{m} {\bf u}_{1\alpha}(x)|+C+ Ce^{-\frac{C}{\sqrt{\delta(x')}}}\\
\leq&\, C|\log\varepsilon|^{-1}\delta(x')^{-\frac{m+1}{2}}+C\delta(x')^{-m/2}.
\end{align*}
Similarly, for $d\ge 4$, by using \eqref{nablau_dec3d}--\eqref{ubgj3d1}, Lemma \ref{cdgj3d} and Theorems \ref{thm4.1} and \ref{thm4v33d} again, we have, for $x\in\Omega_{R}$,
\begin{align*}
	|\nabla^{m}{\bf u}(x)|\leq C \sum_{\alpha=1}^{d(d+1)/2}|\nabla^{m} {\bf u}_{1\alpha}(x)|+C+ Ce^{-\frac{C}{\sqrt{\delta(x')}}}\leq C\delta(x')^{-\frac{m+1}{2}}.
\end{align*}
This completes the proof of Theorem \ref{mainth3d}.
\end{proof}

%\vspace{.5cm}

\section{Asymptotics and lower bounds in dimensions two and three}\label{sec5}

In this section, under certain symmetry assumption on the domain and boundary data, we derive an explicit asymptotic formula for the higher derivatives in dimensions two and three. To this end, we need to consider the limiting case when $\varepsilon=0$. Let ${\bf u}^*$ be the solution to
\begin{align*}%\label{maineqn touch}
	\begin{cases}
		\mathcal{L}_{\lambda, \mu}{\bf u}^{*}=0,\quad&\hbox{in}\ \Omega^{*},\\
		{\bf u}^{*}=\sum_{\alpha=1}^{d(d+1)/2}C_{*}^{\alpha}\boldsymbol{\psi}_{\alpha},&\hbox{on}\ \partial D_{1}^{*}\cup\partial D_{2}^{*},\\
		\int_{\partial{D}_{1}^{*}}\frac{\partial{{\bf u}}^{*}}{\partial\nu}\big|_{+}\cdot\boldsymbol{\psi}_{\beta}+\int_{\partial{D}_{2}^{*}}\frac{\partial{{\bf u}}^{*}}{\partial\nu}\big|_{+}\cdot\boldsymbol{\psi}_{\beta}=0,&\beta=1,\cdots,\frac{d(d+1)}{2},\\
		{\bf u}^{*}=\boldsymbol{\varphi},&\hbox{on}\ \partial{D},
	\end{cases}
\end{align*}
where $D_{1}^{*}:=\{x\in\mathbb R^{d}:~ x+(0',\frac{\varepsilon}{2})\in D_{1}\}$, $D_{2}^{*}:=\{x\in\mathbb R^{d}:~ x-(0',\frac{\varepsilon}{2})\in D_{2}\}$, and $\Omega^{*}:=D\setminus\overline{D_{1}^{*}\cup D_{2}^{*}}$, and the constants $C_*^\alpha$ are uniquely determined by the solution ${\bf u}^*$. We define linear functionals of $\boldsymbol{\varphi}$:
\begin{align*}%\label{def_bj}
	b_{1\beta}^{*}[\boldsymbol{\varphi}]:=\int_{\partial{D}_{1}^{*}}\frac{\partial {\bf u}^{*}}{\partial \nu}\Big|_{+}\cdot\boldsymbol{\psi}_{\beta},\quad\mbox{for}~\beta=1,\cdots,d(d+1)/2.
\end{align*}

We assume $D_1\cup D_2$ and $D$ are symmetric with respect to each axis $x_i$ and $\boldsymbol{\varphi}(x)=-\boldsymbol{\varphi}(-x)$ for $x\in\partial D$. It was shown in \cite[Proposition 5.4]{LX23} that in this case,
 \begin{equation}\label{cxiangdeng}
C_1^\alpha=C_2^\alpha, \quad \text{for}~\alpha=d+1,\dots,d(d+1)/2,~d=2,3,
 \end{equation}
so that we only need to focus on the estimates of the components ${\bf u}_{i\alpha}$, $\alpha=1,2,\dots,d$, $i=1,2$. We further assume $h_1$ and $h_2$ are quadratic and symmetric with respect to the plane $\{x_d=0\}$, say, $h_1(x')=h_2(x')=\frac{1}{2}|x'|^2$ for $|x'|\leq2R$. Then, the vertical distance between $D_{1}$ and $D_{2}$ becomes $\delta(x')=\varepsilon+|x'|^2$.  

We will construct a series of explicit polynomial functions ${\bf v}_{\alpha}^{l}=\begin{pmatrix}
({\bf v}_{\alpha}^{l})^{(1)},
({\bf v}_{\alpha}^{l})^{(2)}
\end{pmatrix}^{T}$ in dimension two and ${\bf v}_{\alpha}^{l}=\begin{pmatrix}
({\bf v}_{\alpha}^{l})^{(1)},
({\bf v}_{\alpha}^{l})^{(2)},
({\bf v}_{\alpha}^{l})^{(3)}
\end{pmatrix}^{T}$ in dimension three, which satisfy the conditions ${\bf v}_{\alpha}^{1}(x)={\bf u}_{1\alpha}$ on $\Gamma^{\pm}_{2R}$ and ${\bf v}_{\alpha}^{l}(x)=0$ on $\Gamma^{\pm}_{2R}$, $l\ge 2$, in order to approximate the solutions ${\bf u}_{1\alpha}$ for $\alpha=1,2,\dots,d$ in the narrow region. We treat \eqref{fzl1} as a second-order ordinary differential equations with respect to $x_2$. By using the boundary conditions ${\bf v}_{1}^{l}(x)=0$, $l\ge 2$ on $\Gamma^{\pm}_{2R}$, we can explicitly derive the auxiliary functions ${\bf v}_1^l$, as presented in \eqref{def_v1l1} and \eqref{def_v1l2}. In this section, under the above symmetry assumptions, we employ a direct method to obtain the solutions to \eqref{fzl1}. 

\subsection{Dimension Two}

For ${\bf u}_{11}$, let 
	\begin{align}\label{def2dv11i}
(	{\bf v}_{1}^{1})^{(1)}(x)=\frac{x_{2}}{\varepsilon+x_1^2}+\frac{1}{2},\quad({\bf v}_{1}^{1})^{(2)}(x)=\mathcal{P}_{12,1}(x_1)\Big(x_2^2-\frac{(\varepsilon+x_1^2)^2}{4}\Big)\quad\mbox{in}~\Omega_{2R}.
\end{align}

A direct calculation yields, in $\Omega_{2R}$, 
\begin{align}\label{5.1}
\partial_{x_1x_2} ({\bf v}_{1}^{1})^{(1)} =-\frac{2x_1}{(\varepsilon+x_1^2)^2},\quad \partial_{x_2x_2} ({\bf v}_{1}^{1})^{(2)}= 2\mathcal{P}_{12,1}(x_1).
\end{align}
By means of \eqref{def_v112gx}, we have $\mathcal{P}_{12,1}(x_1)=\frac{\lambda+\mu}{\lambda+2\mu}\frac{x_1}{(\varepsilon+x_1^2)^2}$. Then, it is easy to verify that $|\mathcal{L}_{\lambda,\mu}{\bf v}_{1}^{1}|\leq C\delta(x_1)^{-1}$, which is less than $|\mathcal{L}_{\lambda,\mu}(({\bf v}_{1}^{1})^{(1)}{\boldsymbol\psi}_{1})(x)|$. This result is consistent with that in Section \ref{sec2}. Recalling that $\delta(x_1)=\varepsilon+x_1^2$, the Green function \eqref{defgreenfunction} becomes
\begin{align}\label{xingreen}
	G(y, x_{2})=\frac{1}{\varepsilon+x_1^2}\begin{cases}
		(\frac{\varepsilon+x_1^2}{2}-x_{2})(y+\frac{\varepsilon+x_1^2}{2}), \quad -\frac{\varepsilon+x_1^2}{2}\le y\le x_{2},\\
		(\frac{\varepsilon+x_1^2}{2}-y)(x_{2}+\frac{\varepsilon+x_1^2}{2}), \quad x_{2}\le y\le \frac{\varepsilon+x_1^2}{2}.
	\end{cases}
\end{align}
By virtue of \eqref{def_v112}, \eqref{5.1} and \eqref{xingreen}, we have, in $\Omega_{2R}$,
\begin{align*}
	({\bf v}_{1}^{1})^{(2)}(x)&=-\frac{\lambda+\mu}{\lambda+2\mu}\frac{2x_1}{(\varepsilon+x_1^2)^2}\int_{-\frac12(\varepsilon+x_1^2)}^{\frac12(\varepsilon+x_1^2)} G(y,x_2)dy\\
	&=\frac{\lambda+\mu}{\lambda+2\mu}\frac{x_1}{(\varepsilon+x_1^2)^2}\Big(x_2^2-\frac{(\varepsilon+x_1^2)^2}{4}\Big).
\end{align*}
For $l\ge 2$, we choose ${\bf v}_{1}^{l}(x)$ satisfying ${\bf v}_{1}^{l}(x)=0$ on $\partial\Omega$, with the following form in $\Omega_{2R}$,
\begin{align}\label{def2dvi1i}
\begin{split}
	({\bf v}_{1}^{l})^{(1)}(x) &= \sum_{i=1}^{l-1} \mathcal{P}_{l1,i}(x_1)x_2^{2l-2i-1}\Big(x_2^2-\frac{(\varepsilon+x_1^2)^2}{4} \Big),\\
	({\bf  v}_{1}^{l})^{(2)}(x)&= \sum_{i=1}^{l}\mathcal{P}_{l2,i}(x_1)x_2^{2l-2i}\Big(x_2^2-\frac{(\varepsilon+x_1^2)^2}{4}\Big).
\end{split}
\end{align}
By using \eqref{fzl1}, we have
 $\mathcal{P}_{21,1}(x_1)=\frac{2\lambda+3\mu}{3(\lambda+2\mu)}\frac{\varepsilon-3x_1^2}{(\varepsilon+x_1^2)^3}$, $\mathcal{P}_{l1,i}(x_1)$, $0\leq i\le l-2$, satisfy the following recursive formula
\begin{align}\label{dtgs1}
	\mathcal{P}_{l1,i+1}(x_1)&= \frac{(\varepsilon+x_1^2)^2}{4}\mathcal{P}_{l1,i}(x_1)\nonumber\\
	&-\frac{1}{ (a_{l,i}-1)a_{l,i}} \left( \frac{\lambda+\mu}{\mu}(a_{l,i}-1)
	\tilde{\mathcal{P}}'_{(l-1)2,i+1}(x_1)+\frac{\lambda+2\mu}{\mu}\tilde{\mathcal{P}}''_{(l-1)1,i+1}(x_1)\right),
\end{align}
where $a_{l,i}=2(l-i)-1$, and $\mathcal{P}_{l2,i}(x_1)$, $0\leq i\le l-1$, satisfy
\begin{align}\label{dtgs2}
\mathcal{P}_{l2,i+1}(x_1)=&\,\frac{(\varepsilon+x_1^2)^2}{4}\mathcal{P}_{l2,i}(x_1)\nonumber\\
&-\frac{1}{a_{l,i}(a_{l,i}+1)}\left(\frac{\lambda+\mu}{\lambda+2\mu}a_{l,i}\tilde{\mathcal{P}}'_{l1,i+1}(x_1)+\frac{\mu}{\lambda+2\mu}\tilde{\mathcal{P}}''_{(l-1)2,i+1}(x_1)\right),
\end{align}
where 
$$\tilde{\mathcal{P}}_{l1,i}= \mathcal{P}_{l1,i}(x_1)-\frac{(\varepsilon+x_1^2)^2}{4}\mathcal{P}_{l1,i-1}(x_1), \quad\tilde{\mathcal{P}}_{l2,i}(x_1)= \mathcal{P}_{l2,i}(x_1)-\frac{(\varepsilon+x_1^2)^2}{4}\mathcal{P}_{l2,i-1}(x_1).$$ Here we use the convention that $\mathcal{P}_{l1,i}(x_1)=0$ if $i\notin\{1,2,\dots,l-1\}$ and $\mathcal{P}_{l2,i}(x_1)=0$ if  $i\notin\{1,2,\dots,l\}$. 
Then, we have
\begin{equation*}
\nabla^{m}{\bf u}_{11}(x)=\nabla^{m}\sum_{l=1}^{m}{\bf v}_{1}^{l}(x) +O(1)\quad \mbox{for}~
		x\in\Omega_{R}.
\end{equation*} 
This asymptotic formula of $\nabla^{m}{\bf u}_{1\alpha}$ is an essential improvement of previous results in \cite{bll1,LX23}. 

For ${\bf u}_{12}$, similarly, let
	\begin{equation*}
	({\bf v}_{2}^{1})^{(2)}(x)=\frac{x_2}{\varepsilon+x_1^2}+\frac{1}{2},\quad\quad({\bf v}_{2}^{1})^{(1)}(x)= \mathcal{P}_{12,1}(x_1) \Big(x_2^2-\frac{(\varepsilon+x_1^2)^2}{4}\Big),
	\end{equation*}
	where $\mathcal{P}_{12,1} (x_1)=\frac{\lambda+\mu}{\mu}\frac{x_1}{(\varepsilon+x_1^2)^2}$ and, for $l\ge 2$,
\begin{equation*}
\begin{split}
&({\bf  v}_{2}^{l})^{(2)}(x)= \sum_{i=1}^{l-1}\mathcal{P}_{l1,i}(x_1)x_2^{2l-2i-1}\Big(x_2^2-\frac{(\varepsilon+x_1^2)^2}{4}\Big),\\
& ({\bf  v}_{2}^{l})^{(1)}(x)= \sum_{i=1}^{l}\mathcal{P}_{l2,i}(x_1)x_2^{2l-2i} \Big( x_2^2-\frac{(\varepsilon+x_1^2)^2}{4}\Big),
\end{split}
\end{equation*}
where $ \mathcal{P}_{21,1}
	(x_1)=\frac{\lambda}{3\mu} \frac{3x_1^2-\varepsilon}{(\varepsilon+x_1^2)^3}$, $\mathcal{P}_{l1,i}(x_1)$ satisfies the recursive formula \eqref{dtgs1} with the factors $\frac{\lambda+\mu}{\lambda+2\mu}$ and $\frac{\mu}{\lambda+2\mu}$ in place of $\frac{\lambda+\mu}{\mu}$ and $\frac{\lambda+2\mu}{\mu}$, respectively, while $\mathcal{P}_{l2,i}(x_1)$ satisfies the recursive formula \eqref{dtgs2} with the factors $\frac{\lambda+\mu}{\mu}$ and $\frac{\lambda+2\mu}{\mu}$ in place of $\frac{\lambda+\mu}{\lambda+2\mu}$ and $\frac{\mu}{\lambda+2\mu}$, respectively. 
Then
\begin{equation*}
\nabla^{m}{\bf u}_{12}(x)=\nabla^{m}\sum_{l=1}^{m}{\bf v}_{2}^{l}(x) +O(1)\quad \mbox{for}~
		x\in\Omega_{R}.
\end{equation*}

For the coefficients $C_i^\alpha$, $i,\alpha=1,2$, it was proven in \cite[Proposition 3.7]{LX23} that 
\begin{lemma}\label{SExsjjzk}
Let $D_1,D_2\subset D$, $\boldsymbol{\varphi}$ be as above and let $C_i^\alpha$ be defined in \eqref{decom_u} in dimension two. The following assertions hold
\begin{align*}
		&C_{1}^{1}-C_{2}^{1}
		=\frac{1}{\pi\mu}\cdot b_{11}^{*}[\boldsymbol{\varphi}]\sqrt{\varepsilon}\left(1+O(\varepsilon^{1/4})\right),\\
		&C_{1}^{2}-C_{2}^{2}
		=\frac{1}{\pi(\lambda+2\mu)}\cdot b_{12}^{*}[\boldsymbol{\varphi}]\sqrt{\varepsilon}\left(1+O(\varepsilon^{1/4})\right).
	\end{align*}
\end{lemma}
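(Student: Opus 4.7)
The plan is to derive a linear algebraic system for the coefficients $C_i^\alpha$ from the flux conditions in \eqref{maineqn}, and then extract the leading-order behavior of $C_1^\alpha-C_2^\alpha$ by computing the dominant entries of the associated ``stiffness matrix'' using the auxiliary functions from Section \ref{sec2}. Inserting the decomposition \eqref{decom_u} into the six conditions $\int_{\partial D_j}\frac{\partial{\bf u}}{\partial\nu}\big|_{+}\cdot\boldsymbol{\psi}_{\beta}=0$, $j=1,2$, $\beta=1,2,3$, yields
\begin{equation*}
\sum_{i=1}^{2}\sum_{\alpha=1}^{3}C_i^\alpha\,a_{ij}^{\alpha\beta} \;=\; -\int_{\partial D_j}\frac{\partial{\bf u}_0}{\partial\nu}\Big|_{+}\cdot\boldsymbol{\psi}_\beta, \qquad a_{ij}^{\alpha\beta}:=\int_{\partial D_j}\frac{\partial {\bf u}_{i\alpha}}{\partial\nu}\Big|_{+}\cdot\boldsymbol{\psi}_\beta.
\end{equation*}
The symmetry assumptions on $D$, $D_1\cup D_2$, and $\boldsymbol\varphi$ imply \eqref{cxiangdeng} and produce reflection identities among the $a_{ij}^{\alpha\beta}$; subtracting the $j=2$ equations from the $j=1$ equations for $\beta=1,2$ and exploiting parity should isolate $C_1^\alpha-C_2^\alpha$ from the remaining unknowns, up to off-diagonal couplings that turn out to be lower order.

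I would then compute the leading asymptotics of the diagonal entries. Replacing ${\bf u}_{1\alpha}$ by ${\bf v}_\alpha^1$ on $\Gamma_R^+$ (modulo an $O(1)$ correction controlled by Proposition \ref{propnew}) and using that the outward normal of $D_1$ is nearly $-{\bf e}_2$ near the top of the neck, the first component of the conormal derivative contributes
\begin{equation*}
\int_{|x_1|<R}-\mu\,\partial_{x_2}({\bf v}_1^1)^{(1)}\,dx_1 \;=\; -\mu\!\int_{|x_1|<R}\frac{dx_1}{\varepsilon+x_1^2} \;=\; -\frac{\mu\pi}{\sqrt{\varepsilon}}\bigl(1+O(\sqrt{\varepsilon})\bigr),
\end{equation*}
so that the antisymmetric combination $a_{11}^{11}-a_{12}^{11}$ is asymptotic to $-2\pi\mu/\sqrt{\varepsilon}$. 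For $\alpha=2$, the second component of the conormal derivative gives the analogous integral with $\mu$ replaced by $\lambda+2\mu$, whence the corresponding entry behaves like $-2\pi(\lambda+2\mu)/\sqrt{\varepsilon}$. The remaining entries (off-diagonal in $\alpha,\beta$, those involving the rotational mode $\boldsymbol\psi_3$, and contributions from $\Omega\setminus\Omega_R$) are uniformly bounded in $\varepsilon$ by Theorems \ref{thm3.1} and \ref{thm3.3} together with the exponential decay supplied by Theorem \ref{thm31}.

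For the right-hand side, the key step is to show that as $\varepsilon\to 0$,
\begin{equation*}
\int_{\partial D_1}\frac{\partial{\bf u}_0}{\partial\nu}\Big|_{+}\cdot\boldsymbol{\psi}_\beta \;-\; \int_{\partial D_2}\frac{\partial{\bf u}_0}{\partial\nu}\Big|_{+}\cdot\boldsymbol{\psi}_\beta \;=\; 2\,b_{1\beta}^*[\boldsymbol\varphi]\bigl(1+O(\varepsilon^{1/4})\bigr),
\end{equation*}
which follows by comparing ${\bf u}_0$ with the limit profile ${\bf u}^*$ through an energy estimate in $H^1(D\setminus\Omega_R)$ (where the two problems are uniformly close) and using the odd symmetry ${\bf u}^*(-x)=-{\bf u}^*(x)$ to relate the two integrals on $\partial D_i^*$. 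Solving the decoupled antisymmetric equations then yields the claimed formulas with constants $\frac{1}{\pi\mu}$ and $\frac{1}{\pi(\lambda+2\mu)}$. The main obstacle I anticipate is obtaining the sharp $O(\varepsilon^{1/4})$ error rate simultaneously in the stiffness matrix and in the load vector: this requires quantifying the difference between ${\bf u}_{1\alpha}$ and ${\bf v}_\alpha^1$ not merely pointwise but at the level of integrated conormal derivatives, and controlling ${\bf u}_0-{\bf u}^*$ in a neighborhood of the shrinking gap where both base domains degenerate.
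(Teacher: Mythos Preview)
The paper does not prove this lemma; it is quoted verbatim from \cite[Proposition 3.7]{LX23}, so there is no in-paper proof to compare against. Your outline---deriving the linear system from the flux conditions, using the symmetry to decouple the antisymmetric part, computing the dominant diagonal entries $a_{11}^{\alpha\alpha}-a_{12}^{\alpha\alpha}\sim -\pi\mu/\sqrt{\varepsilon}$ (resp.\ $-\pi(\lambda+2\mu)/\sqrt{\varepsilon}$) via the explicit Keller-type profile ${\bf v}_\alpha^1$, and identifying the right-hand side with $b_{1\beta}^*[\boldsymbol{\varphi}]$ through comparison with the limit problem---is precisely the approach of that reference, and your anticipated difficulty (extracting the sharp $O(\varepsilon^{1/4})$ remainder from both the stiffness matrix and the comparison ${\bf u}_0$ vs.\ ${\bf u}^*$) is exactly where the work lies there.
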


Then, by using Lemma \ref{SExsjjzk} and \eqref{cxiangdeng}, we have
\begin{theorem}\label{mainth-5.3}
Let $D_1,D_2\subset D$, $\boldsymbol{\varphi}$ be defined as above and let ${\bf u}\in H^{1}(D;\mathbb R^{2})\cap C^{m}(\bar{\Omega};\mathbb R^{2})$ be a solution to \eqref{maineqn}  in dimension two. Then for $m\ge1$ and sufficiently small $0<\varepsilon<1/4$, we have the following assertions, for $x\in\Omega_{R}$,
	\begin{align*}
		\nabla^{m}{\bf u}(x)=\frac{\sqrt{\varepsilon}}{\pi}\left(\frac{b_{11}^{*}[\boldsymbol{\varphi}]}{\mu}\nabla^{m} \sum_{l=1}^{m}{\bf v}_{1}^{l}(x)+\frac{b_{12}^{*}[\boldsymbol{\varphi}]}{\lambda+2\mu}\nabla^{m} \sum_{l=1}^{m}{\bf v}_{2}^{l}(x)\right)(1+O(\varepsilon^{1/4}))+O(1),
	\end{align*}
	where $\tilde{\bf v}_{\alpha}^{l}$, $\alpha=1,2$, are specific functions defined as above. Especially, they satisfy
	$$\frac{1}{C(\varepsilon+x_1^2)^{(m+1)/2}}\leq|\nabla^{m} \sum_{l=1}^{m}\tilde{\bf v}_{\alpha}^{l}(x)|\leq\frac{C}{(\varepsilon+x_1^2)^{(m+1)/2}},\quad\mbox{for}~ m\geq1.
	$$
\end{theorem}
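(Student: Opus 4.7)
The plan is to combine the decomposition \eqref{nablau_dec} with the symmetry-induced cancellations and the sharp asymptotics of the boundary coefficients. First I would invoke the symmetry identity \eqref{cxiangdeng}, which gives $C_1^3=C_2^3$, so the $\alpha=3$ contribution in \eqref{nablau_dec} disappears entirely. For the bounded part $\nabla^m {\bf u}_b$, I would use Theorem \ref{thm31} together with the uniform bound $|C_i^\alpha|\le C$ from Lemma \ref{cdgj} to absorb ${\bf u}_b$ into the $O(1)$ remainder. This reduces the problem to controlling $(C_1^1-C_2^1)\nabla^m {\bf u}_{11}+(C_1^2-C_2^2)\nabla^m {\bf u}_{12}$ in $\Omega_R$.

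Next, I would verify that the explicit polynomial auxiliary functions ${\bf v}_\alpha^l$ defined in \eqref{def2dv11i}--\eqref{def2dvi1i} (together with the recursions \eqref{dtgs1}, \eqref{dtgs2}) coincide up to boundary-matching with the functions constructed abstractly in \eqref{def_v1l1}--\eqref{def_v1l2}. Indeed, the symmetric choice $h_1(x')=h_2(x')=\tfrac12|x'|^2$ makes the Green function \eqref{xingreen} symmetric in $x_2$, and the recursions \eqref{dtgs1}--\eqref{dtgs2} are precisely what one obtains by plugging the polynomial ansatz into the ODEs \eqref{fzl1} with the Dirichlet condition on $\Gamma^\pm_{2R}$. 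Consequently, the energy iteration of Theorem \ref{thm3.1} still yields
\[
\Big\|\nabla^m\Big({\bf u}_{1\alpha}-\sum_{l=1}^{m}{\bf v}_\alpha^l\Big)\Big\|_{L^\infty(\Omega_{\delta(x_1)/2}(x_1))}\le C,\qquad \alpha=1,2.
\]
Substituting the asymptotics $C_1^\alpha-C_2^\alpha$ from Lemma \ref{SExsjjzk} yields the displayed expansion for $\nabla^m {\bf u}$, with the $(1+O(\varepsilon^{1/4}))$ factor carried from Lemma \ref{SExsjjzk} and the additive $O(1)$ coming from both the iteration error and $\nabla^m {\bf u}_b$.

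It remains to establish the two-sided pointwise bound on $|\nabla^m\sum_{l=1}^m{\bf v}_\alpha^l|$. The upper bound is immediate from \eqref{xydgj2d2}, which was already proved inductively in Section \ref{sec2} and carries over verbatim to the polynomial representatives here. For the lower bound, I would isolate the leading singular term: the derivative $\partial_{x_2}^m({\bf v}_1^1)^{(1)}$ (respectively $\partial_{x_2}^m({\bf v}_2^1)^{(2)}$) comes from differentiating the Keller-type profile $x_2/(\varepsilon+x_1^2)$, giving an explicit contribution of order $(\varepsilon+x_1^2)^{-(m+1)/2}$ that does not vanish at, say, the midpoint $x_2=0$ when $m$ is even, or at a suitable off-axis point when $m$ is odd. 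All higher terms $\nabla^m {\bf v}_\alpha^l$ with $l\ge 2$ have an extra factor $\delta(x_1)^{l-1}$ coming from the $\big(x_2^2-(\varepsilon+x_1^2)^2/4\big)$ envelope in \eqref{def2dvi1i}, so they are strictly lower order in the singularity and cannot cancel the leading term for small $\varepsilon+x_1^2$. The main technical obstacle is precisely this non-cancellation: one must verify, from the recursion \eqref{dtgs1}--\eqref{dtgs2}, that the coefficient of the leading polynomial monomial in $\partial_{x_2}^m\sum_l {\bf v}_\alpha^l$ is a nonzero rational function of $\lambda,\mu$; this can be done by induction on $l$ using the explicit seed values for $l=1,2$ already computed in Section \ref{sec5}.
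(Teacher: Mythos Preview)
Your overall strategy matches the paper's: the theorem is stated immediately after Lemma~\ref{SExsjjzk} with the remark ``by using Lemma~\ref{SExsjjzk} and \eqref{cxiangdeng}'', so the intended proof is exactly the combination of \eqref{nablau_dec}, the symmetry cancellation $C_1^3=C_2^3$, Theorem~\ref{thm31} for ${\bf u}_b$, the approximation $\nabla^m{\bf u}_{1\alpha}=\nabla^m\sum_{l=1}^m{\bf v}_\alpha^l+O(1)$, and the coefficient asymptotics. Your write-up of these steps is correct.

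There is, however, a concrete error in your lower-bound argument. You propose to isolate $\partial_{x_2}^m({\bf v}_1^1)^{(1)}$ as the leading singular contribution, but from \eqref{def2dv11i} one has $({\bf v}_1^1)^{(1)}=x_2/(\varepsilon+x_1^2)+\tfrac12$, which is \emph{linear} in $x_2$; hence $\partial_{x_2}^m({\bf v}_1^1)^{(1)}=0$ for every $m\ge 2$, and your argument collapses. The correct choice (as the paper uses in the proof of Theorem~\ref{thm-5.12}) is $\partial_{x_1}^{m-1}\partial_{x_2}({\bf v}_1^1)^{(1)}=\partial_{x_1}^{m-1}\big((\varepsilon+x_1^2)^{-1}\big)$, which is genuinely of order $(\varepsilon+x_1^2)^{-(m+1)/2}$ at a point like $(r\sqrt{\varepsilon},0)$. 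Your subsidiary claim that the contributions from $l\ge 2$ are ``strictly lower order'' because of an extra $\delta(x_1)^{l-1}$ factor is also too coarse: by \eqref{zx2dv1l1} the terms with $l\le m/2$ can reach the same order $\delta^{-(m+1)/2}$ in the full tensor $\nabla^m$. What saves the argument is that for the \emph{specific} component $\partial_{x_1}^{m-1}\partial_{x_2}({\bf v}_1^l)^{(1)}$ with $l\ge 2$, the bound \eqref{xydgj2d2} gives $\delta^{(2l-m-3)/2}\le C\delta^{(1-m)/2}$, which is indeed lower order than the $l=1$ contribution (cf.\ \eqref{lowergj}--\eqref{lower2}). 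So the fix is to replace $\partial_{x_2}^m$ by $\partial_{x_1}^{m-1}\partial_{x_2}$ throughout your lower-bound paragraph and to quote \eqref{xydgj2d2} for the correct hierarchy.
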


As a direct consequence, we obtain the asymptotic expansion of the components with the highest singularity in the derivative matrix of any order. For instance, 

\begin{corollary}\label{cor1}
	Under the assumptions of Theorem \ref{mainth-5.3},  let ${\bf u}\in H^{1}(D;\mathbb R^{2})\cap C^{m}(\bar{\Omega};\mathbb R^{2})$ be a solution to \eqref{maineqn}  in dimension two. For $x\in\Omega_{R}$, we have
	
	(i) for the gradient matrix,
	\begin{align*}
		\partial_{x_2} {\bf u}^{(1)}(x)&=\frac{1}{\pi\mu}\cdot b_{11}^{*}[\boldsymbol{\varphi}]\frac{\sqrt{\varepsilon}}{\varepsilon+x_1^2}\left(1+O(\varepsilon^{1/4})\right)+O(1),\\
		\partial_{x_2} {\bf u}^{(2)}(x)&=\frac{1}{\pi(\lambda+2\mu)}\cdot b_{12}^{*}[\boldsymbol{\varphi}]\frac{\sqrt{\varepsilon}}{\varepsilon+x_1^2}\left(1+O(\varepsilon^{1/4})\right)+O(1),
	\end{align*}
	while the other terms are bounded by a constant $C$;
	
	(ii) for the second-order derivatives matrix,
	\begin{align*}
		\partial_{x_2x_2}{\bf u}^{(1)}(x)&=\frac{2(\lambda+\mu)}{\pi\mu(\lambda+2\mu)}\cdot b_{12}^{*}[\boldsymbol{\varphi}]\frac{\sqrt{\varepsilon}x_1}{(\varepsilon+x_1^2)^2}\left(1+O(\varepsilon^{1/4})\right)+O(1)(\varepsilon+x_1^2)^{-1/2},\\
		\partial_{x_1x_2}{\bf u}^{(1)}(x)&=-\frac{2}{\pi\mu}\cdot b_{11}^{*}[\boldsymbol{\varphi}]\frac{\sqrt{\varepsilon}x_1}{(\varepsilon+x_1^2)^2}\left(1+O(\varepsilon^{1/4})\right)+O(1)(\varepsilon+x_1^2)^{-1/2},\\
		\partial_{x_1x_2}{\bf u}^{(2)}(x)&=-\frac{2}{\pi(\lambda+2\mu)}\cdot b_{12}^{*}[\boldsymbol{\varphi}]\frac{\sqrt{\varepsilon}x_1}{(\varepsilon+x_1^2)^2}\left(1+O(\varepsilon^{1/4})\right)+O(1)(\varepsilon+x_1^2)^{-1/2},\\
		\partial_{x_2x_2}{\bf u}^{(2)}(x)&=\frac{2(\lambda+\mu)}{\pi\mu(\lambda+2\mu)}\cdot b_{11}^{*}[\boldsymbol{\varphi}]\frac{\sqrt{\varepsilon}x_1}{(\varepsilon+x_1^2)^2}\left(1+O(\varepsilon^{1/4})\right)+O(1)(\varepsilon+x_1^2)^{-1/2},
	\end{align*}
	while the other terms are bounded by $\frac{C}{\sqrt{\varepsilon+x_1^2}}$.
\end{corollary}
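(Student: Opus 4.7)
The plan is to derive Corollary \ref{cor1} as a direct consequence of the asymptotic formula in Theorem \ref{mainth-5.3} by substituting the explicit forms of ${\bf v}_1^l$ and ${\bf v}_2^l$ and isolating, entry by entry, the single term that produces the highest-order singularity. For part (i), I would specialize Theorem \ref{mainth-5.3} to $m=1$, so that only $\nabla {\bf v}_1^1$ and $\nabla {\bf v}_2^1$ appear, both defined explicitly in \eqref{def2dv11i} and its analogue for $\alpha=2$. A direct differentiation gives $\partial_{x_2}({\bf v}_1^1)^{(1)}=(\varepsilon+x_1^2)^{-1}$ and $\partial_{x_2}({\bf v}_2^1)^{(2)}=(\varepsilon+x_1^2)^{-1}$; every other first derivative of ${\bf v}_1^1$ or ${\bf v}_2^1$ carries an additional factor of $x_1$ or $x_2$, and since $|x_2|\le (\varepsilon+x_1^2)/2$ in $\Omega_R$, all such derivatives are at most $O((\varepsilon+x_1^2)^{-1/2})$. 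Multiplied by $\sqrt{\varepsilon}$ they are bounded and absorbed into the $O(1)$ remainder. Collecting the two surviving diagonal contributions with the weights $\tfrac{\sqrt{\varepsilon}}{\pi\mu} b_{11}^{*}$ and $\tfrac{\sqrt{\varepsilon}}{\pi(\lambda+2\mu)} b_{12}^{*}$ supplied by Theorem \ref{mainth-5.3} reproduces the stated formulas for $\partial_{x_2}{\bf u}^{(1)}$ and $\partial_{x_2}{\bf u}^{(2)}$.

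For part (ii), I would apply Theorem \ref{mainth-5.3} with $m=2$, which additionally brings in $\nabla^2 {\bf v}_1^2$ and $\nabla^2 {\bf v}_2^2$. The relevant second derivatives of ${\bf v}_\alpha^1$ are immediate from \eqref{def2dv11i}: $\partial_{x_2x_2}({\bf v}_1^1)^{(1)}=0$, $\partial_{x_1x_2}({\bf v}_1^1)^{(1)}=-2x_1/(\varepsilon+x_1^2)^2$, and $\partial_{x_2x_2}({\bf v}_1^1)^{(2)}=2\tfrac{\lambda+\mu}{\lambda+2\mu}\, x_1/(\varepsilon+x_1^2)^2$, with symmetric expressions for ${\bf v}_2^1$. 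Each of these four surviving second derivatives carries the $x_1$ prefactor predicted by the statement, and after multiplication by the coefficients from Theorem \ref{mainth-5.3} they match the stated leading terms. Every other entry of $\nabla^2{\bf v}_\alpha^1$ and every entry of $\nabla^2{\bf v}_\alpha^2$---which is computed from the polynomial ansatz \eqref{def2dvi1i} using the given formula for $\mathcal P_{21,1}$---has size at most $(\varepsilon+x_1^2)^{-1}$ in $\Omega_R$, because the polynomial coefficients in \eqref{def2dvi1i} always carry either an extra factor of $x_1$ or an extra power of $(\varepsilon+x_1^2)^{-1/2}$ compared with the leading contribution, and $|x_2|\lesssim \varepsilon+x_1^2$. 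After multiplication by $\sqrt{\varepsilon}$ these terms are at most $(\varepsilon+x_1^2)^{-1/2}$ and absorb into the declared remainder. Finally, the multiplicative error $(1+O(\varepsilon^{1/4}))$ attached to the leading term of size $\sqrt{\varepsilon}|x_1|/(\varepsilon+x_1^2)^2$ contributes at most $\varepsilon^{3/4}/(\varepsilon+x_1^2)^{3/2}$, which is bounded by $C/(\varepsilon+x_1^2)^{1/2}$ since $\varepsilon+x_1^2\ge \varepsilon \ge \varepsilon^{3/4}$.

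The main obstacle will be the bookkeeping in part (ii): for each of the four Hessian entries one must confirm that precisely one of the eight candidates $\partial_{x_i x_j}({\bf v}_\alpha^l)^{(\beta)}$ with $\alpha\in\{1,2\}$ and $l\in\{1,2\}$ produces the claimed $\sqrt{\varepsilon}|x_1|/(\varepsilon+x_1^2)^2$ behavior while the seven others are strictly smaller. The control is parity-based: the linear-in-$x_2$ structure of $({\bf v}_1^1)^{(1)}$ kills $\partial_{x_2x_2}$ from the $b_{11}^*$-column and forces the leading $\partial_{x_2x_2}{\bf u}^{(1)}$ to come solely from the cross component $({\bf v}_2^1)^{(1)}$, and symmetrically for the remaining entries; meanwhile the recursive formulas \eqref{dtgs1}--\eqref{dtgs2} at $l=2$ accompany $\mathcal P_{l1,i}$ and $\mathcal P_{l2,i}$ with enough extra powers of $x_1$ or $(\varepsilon+x_1^2)$ to render $\nabla^2{\bf v}_\alpha^2$ strictly subordinate to the $l=1$ contributions after the $\sqrt{\varepsilon}$ rescaling.
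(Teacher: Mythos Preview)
Your approach is correct and is exactly what the paper intends: the corollary is stated as ``a direct consequence'' of Theorem~\ref{mainth-5.3}, with no separate proof given, and your plan of reading off the entries of $\nabla^m\sum_{l=1}^m{\bf v}_\alpha^l$ from the explicit formulas \eqref{def2dv11i}--\eqref{def2dvi1i} and discarding all subordinate contributions is precisely the implicit argument.

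One small correction: your final sentence is both wrong and unnecessary. The inequality $\varepsilon\ge\varepsilon^{3/4}$ fails for small $\varepsilon$, so the bound $\varepsilon^{3/4}(\varepsilon+x_1^2)^{-3/2}\le C(\varepsilon+x_1^2)^{-1/2}$ does not hold uniformly (take $x_1=0$). Fortunately you do not need it: the corollary keeps the factor $(1+O(\varepsilon^{1/4}))$ attached to the leading term, so there is nothing to absorb. The only place the $(1+O(\varepsilon^{1/4}))$ factor touches the subordinate terms is when it multiplies quantities already of size $O((\varepsilon+x_1^2)^{-1})$, and $\sqrt{\varepsilon}(1+O(\varepsilon^{1/4}))(\varepsilon+x_1^2)^{-1}=O((\varepsilon+x_1^2)^{-1/2})$ trivially. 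Simply delete that last sentence and the argument is complete.
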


\subsection{Dimension Three}
For $l\ge 2$, $0\leq i\le l-2$, we define $\mathcal{P}_{l1,i}(x')$ and $\mathcal{Q}_{l1,i}(x')$ by the following recursive formulas:
\begin{align}\label{3ddtgs1}
	&\mathcal{P}_{l1,i+1}(x')=\frac{(\varepsilon+|x'|^2)^2}{4}\mathcal{P}_{l1,i}(x')-\frac{1}{ a_{l,i}(a_{l,i}-1)\mu}\Big((\lambda+\mu)(a_{l,i}-1)\partial_{x_\alpha} \tilde{\mathcal{P}}_{(l-1)2,i+1}(x')\nonumber\\
	&\quad+\mu\partial_{x_\beta x_\beta}\tilde{\mathcal{P}}_{(l-1)1,i+1}(x')+ (\lambda+2\mu)\partial_{x_\alpha x_\alpha} \tilde{\mathcal{P}}_{(l-1)1,i+1}(x') +(\lambda+\mu)\partial_{x_1x_2}
	\tilde{\mathcal{Q}}_{(l-1)1,i+1}(x')\Big),
\end{align}
\begin{align}\label{3ddtgs2}
&\mathcal{Q}_{l1,i+1}(x') = \frac{(\varepsilon+|x'|^2)^2}{4}\mathcal{Q}_{l1,i}(x')-\frac{1}{ a_{l,i}(a_{l,i}-1)\mu} \Big( (\lambda+\mu)(a_{l,i}-1)\partial_{x_\beta} \tilde{\mathcal{P}}_{(l-1)2,i+1}(x')\nonumber\\
&\quad+ \mu\partial_{x_\alpha x_\alpha}\tilde{\mathcal{Q}}_{(l-1)1,i+1}(x')
+(\lambda+2\mu)\partial_{x_\beta x_\beta}\tilde{\mathcal{Q}}_{(l-1)1,i+1}(x')+(\lambda+\mu)
	\partial_{x_1x_2} \tilde{\mathcal{P}}_{(l-1)1,i+1}(x')\Big),
\end{align}
where $\alpha,\beta=1,2$, $\alpha\neq \beta$ and $a_{l,i}=2(l-i)-1$, while for $0\le i\le l-1$,
\begin{align}\label{3ddtgs3}
	\mathcal{P}_{l2,i+1}(x')=& \frac{(\varepsilon+|x'|^2)^2}{4}\mathcal{P}_{l2,i}(x')\nonumber\\
	&-\frac{1}{a_{l,i}(a_{l,i}+1)(\lambda+2\mu)}\Big((\lambda+
	\mu)a_{l,i}\cdot\Big(\partial_{x_\alpha}\tilde{\mathcal{P}}_{l1,i+1}(x')+\partial_{x_\beta}\tilde{\mathcal{Q}}_{l1,i+1}(x')\Big)\nonumber\\
	&\quad\quad\quad\quad\quad\quad\quad\quad\quad\quad\quad\, +\mu\Delta_{x'}\tilde{\mathcal{P}}_{(l-1)2,i+1}(x')\Big), 
\end{align}
where 
$$
\tilde{\mathcal{P}}_{l1,i}(x'):=\mathcal{P}_{l1,i}(x')-\frac{(\varepsilon+|x'|^2)^2}{4}\mathcal{P}_{l1,i-1}(x')
$$ 
and 
$$
\tilde{\mathcal{Q}}_{l1,i}(x'):=\mathcal{Q}_{l1,i}(x')-\frac{(\varepsilon+|x'|^2)^2}{4}\mathcal{Q}_{l1,i-1}(x').
$$ 
Here we use the convention that $\mathcal{Q}_{l1,i}(x')=\mathcal{P}_{l1,i}(x')=0$ if  $i\notin\{1,2,\dots,l-1\}$, and $\mathcal{P}_{l2,i}(x')=0$ if  $i\notin\{1,2,\dots,l\}$.

For $\alpha=1,2$, let 
\begin{align*}
		{\bf v}_{\alpha}^{11}(x)= \Big(\frac{x_3}{\varepsilon+|x'|^2}+\frac{1}{2}\Big){\boldsymbol\psi}_{\alpha}, \quad {\bf v}_{\alpha}^{12}(x)= \mathcal{P}_{12,1}(x')\Big(x_{3}^2-\frac{(\varepsilon+|x'|^2)^2}{4}\Big) {\boldsymbol\psi}_{3}\quad\mbox{in}~\Omega_{2R},
	\end{align*}
	where $\mathcal{P}_{12,1}(x')=\frac{\lambda+\mu}{\lambda+2\mu} \frac{x_\alpha}{(\varepsilon+|x'|^2)^2}$, and for $l\ge 2$, $\beta=1,2$, $\beta\neq \alpha$,
\begin{align*}
\begin{split}
&{\bf v}_{\alpha}^{l1}(x)=\sum_{i=1}^{l-1} \Big(\mathcal{P}_{l1,i}(x'){\boldsymbol\psi}_{\alpha}+\mathcal{Q}_{l1,i}(x'){\boldsymbol\psi}_{\beta}\Big)x_{3}^{2l-1-2i}\Big(x_3^2-\frac{(\varepsilon+|x'|^2)^2}{4}\Big),\\
&{\bf v}_{\alpha}^{l2}(x)= \sum_{i=1}^{l}\mathcal{P}_{l2,i}(x')x_{3}^{2l-2i}\Big(x_3^2-\frac{(\varepsilon+|x'|^2)^2}{4}\Big){\boldsymbol\psi}_{3}
\end{split}\quad\mbox{in}~\Omega_{2R},
	\end{align*}
where 
\begin{align*}
\begin{split}
\mathcal{P}_{21,1}(x')&=\frac{1}{3(\varepsilon+|x'|^2)^3}\Big(\frac{2\lambda+3\mu}{\lambda+2\mu}(\varepsilon+|x'|^2-4x_\alpha^2)+(\varepsilon+|x'|^2-4x_\beta^2) \Big),\\
 \mathcal{Q}_{21,1}(x')&=-\frac{4(\lambda+\mu)}{3(\lambda+2\mu)}\frac{x_1x_2}{(\varepsilon+|x'|^2)^3},\end{split}
\end{align*} $\mathcal{P}_{l1,i}(x')$, $\mathcal{Q}_{l1,i}(x')$, and  $\mathcal{P}_{l2,i}(x')$ are defined in \eqref{3ddtgs1}, \eqref{3ddtgs2} and \eqref{3ddtgs3}, respectively. 

Denote ${\bf v}_\alpha^l(x):={\bf v}_{\alpha}^{l1}(x)+{\bf v}_{\alpha}^{l2}(x)$, $\alpha=1,2$. Then we have \begin{equation*}
\nabla^{m}{\bf u}_{1\alpha}(x)=\nabla^{m}\sum_{l=1}^{m} {\bf v}_{\alpha}^{l}(x)+O(1), \quad x\in\Omega_{R}.
\end{equation*}

For ${\bf u}_{13}$, we define, for $l\ge 2$, $0\le i\le l-2$,
\begin{align}\label{3ddtgs4}
	\mathcal{P}_{l1,i+1}(x')=&\,\frac{(\varepsilon+|x'|^2)^2}{4}\mathcal{P}_{l1,i}(x')\nonumber\\
	&-\frac{1}{a_{l,i}(a_{l,i}-1)(\lambda+2\mu)}\Big((\lambda+\mu)(a_{l,i}-1)\big(\partial_{x_1}\tilde{\mathcal{P}}_{(l-1)2,i+1}(x')\nonumber\\
	&\quad\quad\quad\quad\quad\quad\quad\quad\quad\quad+\partial_{x_2}\tilde{\mathcal{Q}}_{(l-1)2,i+1}(x')\big)+\mu\Delta_{x'}\tilde{\mathcal{P}}_{(l-1)1,i+1}(x')\Big),
\end{align} 
where $a_{l,i}=2(l-i)-1$, while for $0\le i\le l-1$, $\mathcal{P}_{l2,i+1}(x')$ and $\mathcal{Q}_{l2,i+1}(x')$ satisfy
\begin{align}\label{3ddtgs5}
	&\mathcal{P}_{l2,i+1}(x')=\frac{(\varepsilon+|x'|^2)^2}{4}\mathcal{P}_{l2,i}(x')-\frac{1}{a_{l,i}(a_{l,i}+1)\mu}\Big((\lambda+\mu)a_{l,i}\partial_{x_1}\tilde{\mathcal{P}}_{l1,i+1}(x')+(\lambda+2\mu)\nonumber\\
	&\quad\quad\quad\quad~\cdot\partial_{x_1x_1}\tilde{\mathcal{P}}_{(l-1)2,i+1}(x')+ (\lambda+\mu) \partial_{x_1x_2} \tilde{\mathcal{Q}}_{(l-1)2,i+1}(x')+\mu \partial_{x_2x_2}\tilde{\mathcal{P}}_{(l-1)2,i+1}(x')\Big),
\end{align}
\begin{align}\label{3ddtgs6111}
	&\mathcal{Q}_{l2,i+1}(x')=\frac{(\varepsilon+|x'|^2)^2}{4}\mathcal{Q}_{l2,i}(x')-\frac{1}{a_{l,i}(a_{l,i}+1)\mu}\Big((\lambda+\mu)a_{l,i}\partial_{x_2}\tilde{\mathcal{P}}_{l1,i+1}(x')+(\lambda+2\mu)\nonumber\\
	&\quad\quad\quad\quad\quad~\cdot\partial_{x_2x_2}\tilde{\mathcal{Q}}_{(l-1)2,i+1}(x')+(\lambda+\mu) \partial_{x_1x_2} \tilde{\mathcal{Q}}_{(l-1)2,i+1}(x')+\mu \partial_{x_1x_1}\tilde{\mathcal{Q}}_{l1,i+1}(x')\Big).
\end{align}
Here we use the convention that $\mathcal{P}_{l1,i}(x')=0$ if $i\notin\{1,2,\dots,l-1\}$ and $\mathcal{P}_{l2,i}(x')=\mathcal{Q}_{l2,i}(x')=0$ if $i\notin\{1,2,\dots,l\}$.
In $\Omega_{2R}$, let
	\begin{align*}
		{\bf v}_{3}^{11}(x)=\Big(\frac{x_3}{\varepsilon+|x'|^2}+\frac{1}{2}\Big) \boldsymbol{\psi}_{3}, \quad {\bf v}_{3}^{12}(x) =\Big(\mathcal{P}_{12,1}(x'){\boldsymbol\psi}_{1}+\mathcal{Q}_{12,1}(x'){\boldsymbol\psi}_{2}\Big)\Big(x_3^2-\frac{(\varepsilon+|x'|^2)^2}{4}\Big),
	\end{align*}
	where $\mathcal{P}_{12,1}(x')=\frac{\lambda+\mu}{\mu}\frac{x_1}{(\varepsilon+|x'|^2)^2},~\mathcal{Q}_{12,1}= \frac{\lambda+\mu}{\mu}\frac{x_2}{(\varepsilon+|x'|^2)^2}$, and for $l\ge 2$,
	\begin{align*}
\begin{split}
		{\bf v}_{3}^{l1}(x) &=\sum_{i=1}^{l-1}\mathcal{P}_{l1,i}(x')x_3^{2l-1-2i}\Big(x_3^2-\frac{(\varepsilon+|x'|^2)^2}{4}\Big) {\boldsymbol\psi}_{3},\\
		{\bf v}_{3}^{l2}(x)&=\sum_{i=1}^{l}\Big(\mathcal{P}_{l2,i}(x'){\boldsymbol\psi}_{1}+\mathcal{Q}_{l2,i}(x'){\boldsymbol\psi}_{2}\Big)x_3^{2l-2i}\Big(x_3^2-\frac{(\varepsilon+|x'|^2)^2}{4}\Big),
\end{split}
\end{align*}
	where $\mathcal{P}_{21,1}(x')= -\frac{2\lambda}{3}\frac{\varepsilon+|x'|^2-2|x'|^2}{(\varepsilon+|x'|^2)^3}$, $\mathcal{P}_{l1,i}(x')$, $\mathcal{P}_{l2,i}(x')$, and $\mathcal{Q}_{l2,i}(x')$ are defined by \eqref{3ddtgs4}, \eqref{3ddtgs5}, and \eqref{3ddtgs6111}, respectively. Denote ${\bf v}_3^l(x):={\bf v}_{3}^{l1}(x)+{\bf v}_{3}^{l2}(x)$. Then
	\begin{equation*}
\nabla^{m}{\bf u}_{13}(x)=\nabla^{m}\sum_{l=1}^{m}{\bf v}_{3}^{l}(x)+O(1), \quad x\in\Omega_{R}.
	\end{equation*} 

We will use the following asymptotics results from \cite[Proposition 3.7]{LX23}.
\begin{lemma}\label{Sexsjjzk3d}
     Let $C_i^\alpha$ be defined in \eqref{decom_u3d}. The following assertions hold:
	\begin{align*}
		C_{1}^{\alpha}-C_{2}^{\alpha}&=\frac{1}{\pi\mu}\cdot\frac{b_{1\alpha}^{*}[\boldsymbol{\varphi}]}{|\log\varepsilon|}\Big(1+O(\frac{1}{|\log\varepsilon|})\Big),~ \alpha=1,2,~\\
		C_{1}^{3}-C_{2}^{3}&=\frac{1}{\pi(\lambda+2\mu)}\cdot\frac{b_{13}^{*}[\boldsymbol{\varphi}]}{|\log\varepsilon|}\Big(1+O(\frac{1}{|\log\varepsilon|})\Big).
	\end{align*}
\end{lemma}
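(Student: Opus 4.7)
The plan is to extract the sharp asymptotics of $C_1^\alpha-C_2^\alpha$ by setting up the $12\times 12$ linear system induced by the integral orthogonality conditions in \eqref{maineqn}, showing it is diagonally dominant in the $(1\alpha,1\alpha)$--$(1\alpha,2\alpha)$ block, and identifying both the leading coefficients and the limiting right-hand side. Substituting \eqref{decom_u3d} into the fourth condition of \eqref{maineqn} gives, for $i=1,2$ and $\beta=1,\dots,6$,
\begin{equation*}
\sum_{j=1}^{2}\sum_{\gamma=1}^{6} a_{i\beta,j\gamma}\,C_j^{\gamma}=-b_{i\beta}[\boldsymbol{\varphi}],\quad a_{i\beta,j\gamma}:=\int_{\partial D_i}\frac{\partial {\bf u}_{j\gamma}}{\partial\nu}\Big|_{+}\cdot\boldsymbol{\psi}_\beta,\ b_{i\beta}[\boldsymbol{\varphi}]:=\int_{\partial D_i}\frac{\partial {\bf u}_0}{\partial\nu}\Big|_{+}\cdot\boldsymbol{\psi}_\beta.
\end{equation*}
Green's identity together with ${\bf u}_{j\gamma}|_{\partial D}=0$ gives the symmetric energy representation $a_{i\beta,j\gamma}=\int_\Omega\mathbb{C}^0{\bf e}({\bf u}_{j\gamma}):{\bf e}({\bf u}_{i\beta})\,dx$, which drives both the blow-up computation and the symmetry reduction below.

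Next I would pin down the leading asymptotics of the diagonal entries $a_{1\alpha,1\alpha}$, $\alpha=1,2,3$, by substituting the principal auxiliary function ${\bf v}_\alpha^1$ from Section \ref{sec4} and using Theorem \ref{thm4.1} to absorb the remainder into $O(1)$. For $\alpha=1,2$, the dominant contribution is $\mu\int_{\Omega_R}|\partial_{x_3}({\bf v}_\alpha^1)^{(\alpha)}|^2\,dx$; since $\partial_{x_3}({\bf v}_\alpha^1)^{(\alpha)}=1/\delta(x')$ with $\delta(x')=\varepsilon+|x'|^2$, passing to polar coordinates in $x'$ yields
\begin{equation*}
a_{1\alpha,1\alpha}=\mu\int_0^R\frac{2\pi r\,dr}{\varepsilon+r^2}+O(1)=\pi\mu|\log\varepsilon|+O(1),
\end{equation*}
and similarly $a_{13,13}=\pi(\lambda+2\mu)|\log\varepsilon|+O(1)$ with the dominant coefficient replaced by $\lambda+2\mu$ because the singular derivative of ${\bf v}_3^1$ lies in the $(3)$-component. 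The corrector components $({\bf v}_\alpha^1)^{(3)}$ and the higher-order auxiliary functions ${\bf v}_\alpha^l$ for $l\geq 2$ contribute only $O(1)$, since their relevant derivatives behave no worse than $\delta(x')^{-1/2}$ and the resulting integrands are bounded after the three-dimensional area weighting.

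Then I would reduce the full system to a scalar equation for $C_1^\alpha-C_2^\alpha$. Using Theorem \ref{thm31}, ${\bf w}_\alpha:={\bf u}_{1\alpha}+{\bf u}_{2\alpha}-\boldsymbol{\psi}_\alpha$ is $C^2$-bounded uniformly in $\varepsilon$, and combined with $e(\boldsymbol{\psi}_\alpha)=0$ for $\alpha=1,2,3$ this yields $a_{1\alpha,2\alpha}=-a_{1\alpha,1\alpha}+O(1)$. Under the imposed coordinate symmetries, the cross entries $a_{1\alpha,j\gamma}$ with $1\leq\gamma\neq\alpha\leq 3$ vanish by orthogonality of the $\boldsymbol{\psi}$-basis, and those with $\gamma\geq 4$ are $O(1)$. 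Subtracting the $i=1$ and $i=2$ equations at $\beta=\alpha$ therefore reduces to
\begin{equation*}
2a_{1\alpha,1\alpha}\,(C_1^\alpha-C_2^\alpha)=-\bigl(b_{1\alpha}[\boldsymbol{\varphi}]-b_{2\alpha}[\boldsymbol{\varphi}]\bigr)+O(1),
\end{equation*}
and the right-hand side is identified with $-2b_{1\alpha}^*[\boldsymbol{\varphi}]+O(|\log\varepsilon|^{-1})$ via $H^1$-convergence ${\bf u}_0\to {\bf u}_0^*$, the observation that a rigid motion contributes no flux in the Lam\'e system, and the antisymmetry $b_{2\alpha}^*[\boldsymbol{\varphi}]=-b_{1\alpha}^*[\boldsymbol{\varphi}]$ induced by the reflection $x_3\mapsto -x_3$ and the antisymmetry of $\boldsymbol{\varphi}$. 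Dividing by $2a_{1\alpha,1\alpha}=2\pi\mu|\log\varepsilon|+O(1)$ (respectively $2\pi(\lambda+2\mu)|\log\varepsilon|+O(1)$) produces the claimed formula.

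The main obstacle will be pinning down the exact constants $\pi\mu$ and $\pi(\lambda+2\mu)$ in the diagonal asymptotic: it requires verifying that every other singular contribution—the $\boldsymbol{\psi}_3$-corrector component in ${\bf v}_\alpha^1$, the higher-order correctors ${\bf v}_\alpha^l$ with $l\geq 2$, the cross-block entries $a_{1\alpha,1\gamma}$ for $\gamma\neq\alpha$, and the $O(1)$ energy error $\nabla({\bf u}_{1\alpha}-\sum_l{\bf v}_\alpha^l)$—integrates to $O(1)$ rather than $O(|\log\varepsilon|)$ on $\Omega_R$, and can thus be absorbed into the $O(|\log\varepsilon|^{-1})$ remainder after inverting the reduced system.
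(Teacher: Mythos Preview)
The paper does not supply its own proof of this lemma: immediately preceding the statement it writes ``We will use the following asymptotics results from \cite[Proposition 3.7]{LX23},'' and simply quotes the conclusion. So there is nothing to compare your argument to within this paper; you are sketching a proof where the authors defer to an external reference.

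That said, your outline is the standard route taken in \cite{LX23} (and earlier in \cite{bll1,bll2,l} for the $\sqrt{\varepsilon}$ case): write down the $12\times 12$ linear system for the free constants from the fourth line of \eqref{maineqn}, identify the dominant $|\log\varepsilon|$ entries via the explicit auxiliary functions of Section~\ref{sec4}, exploit the symmetry hypotheses of Section~\ref{sec5} to decouple the system, and pass to the $\varepsilon\to 0$ limit on the right-hand side to recover $b_{1\alpha}^*[\boldsymbol{\varphi}]$. The structure is sound. Two points to tighten: first, with the paper's convention that $\vec{n}$ is the outer normal of $D_i$, integration by parts in $\Omega$ gives $a_{i\beta,j\gamma}=-\int_\Omega\mathbb{C}^0 e({\bf u}_{j\gamma}):e({\bf u}_{i\beta})\,dx$, not $+$; tracking this sign (and the corresponding sign in $b_{i\beta}$) is what makes the final formula come out with the correct sign. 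Second, the claim that $b_{1\alpha}[\boldsymbol{\varphi}]-b_{2\alpha}[\boldsymbol{\varphi}]\to 2b_{1\alpha}^*[\boldsymbol{\varphi}]$ with error $O(|\log\varepsilon|^{-1})$ is stronger than what plain $H^1$-convergence of ${\bf u}_0$ gives; in the cited literature this step uses instead that $b_{i\alpha}[\boldsymbol{\varphi}]$ are uniformly bounded and converge (with a quantitative rate) to $b_{i\alpha}^*[\boldsymbol{\varphi}]$, which suffices after dividing by $a_{1\alpha,1\alpha}\sim|\log\varepsilon|$. Neither point is a genuine obstruction, but both need to be handled carefully to obtain the stated $1+O(|\log\varepsilon|^{-1})$ error rather than merely $1+o(1)$.
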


By using Lemma \ref{Sexsjjzk3d}, we derive the asymptotic expansion for the high-order derivatives as follows. 
\begin{theorem}\label{mainth23d}
	Let $D_1,D_2\subset D$, $\boldsymbol{\varphi}$ be defined as above and let ${\bf u}\in 
	H^{1}(D;\mathbb R^{3})\cap C^{m}(\bar{\Omega};\mathbb R^{3})$ be a solution 
	to \eqref{maineqn} in dimension three. Then for $m\ge 1$ and 
	sufficiently small $0<\varepsilon<1/2$, for $x\in\Omega_{R}$ we have
	\begin{align*}
		\nabla^{m}{\bf u}(x)&=\frac{1}{\pi|\log\varepsilon|}\bigg(\sum_{\alpha=1}^{2}\frac{b_{1\alpha}^{*}[\boldsymbol{\varphi}]}{\mu}\Big(\sum_{l=1}^{m}\nabla^{m} {\bf v}_{\alpha}^{l}(x)\Big)+\frac{b_{13}^{*}[\boldsymbol{\varphi}]}{\lambda+2\mu}\Big(\sum_{l=1}^{m}\nabla^{m}{\bf v}_{3}^{l}(x)\Big)\bigg)\\ &\quad\cdot\big(1+O(|\log\varepsilon|^{-1})\big)+O(1),
	\end{align*}
	where ${\bf v}_{\alpha}^{l}$, $\alpha=1,2,3$, are defined as above. They satisfy
	$$\frac{1}{C(\varepsilon+|x'|^2)^{(m+1)/2}}\leq|\nabla^{m} \sum_{l=1}^{m}{\bf v}_{\alpha}^{l}(x)|\leq\frac{C}{(\varepsilon+|x'|^2)^{(m+1)/2}},\quad m\ge 1.
	$$
\end{theorem}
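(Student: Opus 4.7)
The plan is to combine the decomposition \eqref{decom_u3d}--\eqref{nablau_dec3d}, the explicit polynomial auxiliary functions $\mathbf{v}_\alpha^l$ constructed via the recursive formulas \eqref{3ddtgs1}--\eqref{3ddtgs6111} for the symmetric geometry, the coefficient asymptotics of Lemma \ref{Sexsjjzk3d}, the bound on the background $\nabla^m\mathbf{u}_b$ coming from \eqref{ubgj3d}--\eqref{ubgj3d1}, and the symmetry cancellation \eqref{cxiangdeng}. The whole proof is expected to parallel the derivation of Theorem \ref{mainth3d} in Section \ref{sec4}, with the added symmetry giving more precise coefficient asymptotics and allowing the use of explicit polynomial auxiliary functions instead of only energy-iteration estimates.

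First, I would start from \eqref{nablau_dec3d} and split the $\alpha$-sum into $\alpha=1,2,3$ and $\alpha=4,5,6$. By \eqref{cxiangdeng}, under the assumed symmetry $C_1^\alpha-C_2^\alpha=0$ for $\alpha=4,5,6$, so the rotational-mode contribution disappears. The remainder $\nabla^m\mathbf{u}_b$ is controlled by \eqref{ubgj3d}, giving $|\nabla^m\mathbf{u}_b(x)|\leq C$ in $\Omega_R$. Next, for $\alpha=1,2,3$, the constructions of $\mathbf{v}_\alpha^l$ presented just before the theorem are designed so that $\mathcal{L}_{\lambda,\mu}\sum_{l=1}^m\mathbf{v}_\alpha^l$ is small enough for the energy iteration of Proposition \ref{propnew} to yield
\[
\Bigl\|\nabla^m\Bigl(\mathbf{u}_{1\alpha}-\sum_{l=1}^m\mathbf{v}_\alpha^l\Bigr)\Bigr\|_{L^\infty(\Omega_{\delta(x')/2}(x'))}\leq C,
\]
exactly as in Theorems \ref{thm4.1} and \ref{thm4v33d}. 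Substituting this approximation together with Lemma \ref{Sexsjjzk3d} into the surviving part of \eqref{nablau_dec3d}, and absorbing the $O(1)$ errors multiplied by $|C_1^\alpha-C_2^\alpha|=O(|\log\varepsilon|^{-1})$ into the final $O(1)$ tail, produces the stated asymptotic expansion.

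For the two-sided bound on $|\nabla^m\sum_{l=1}^m\mathbf{v}_\alpha^l|$, the upper estimate follows immediately from the derivative bounds \eqref{xydgj3d2} and \eqref{gj1}, which are the three-dimensional analogues of \eqref{xydgj2d2}, since for $l\geq(m+2)/2$ one has $|\nabla^m\mathbf{v}_\alpha^l|\leq C\delta(x')^{l-m-1}$ and for smaller $l$ one has $|\nabla^m\mathbf{v}_\alpha^l|\leq C\delta(x')^{-(m+1)/2}$, so the $l=1$ term dictates the leading order. For the lower bound, I would isolate the component of $\nabla^m\mathbf{v}_\alpha^1$ carrying the maximal singular order: for $\alpha=1,2$, the mixed derivative $\partial_{x_3}\partial_{x_\alpha}^{m-1}(\mathbf{v}_\alpha^1)^{(\alpha)}$ of the Keller-type profile $x_3/(\varepsilon+|x'|^2)+1/2$ yields a leading term proportional to $(\varepsilon+|x'|^2)^{-(m+1)/2}$ with an explicit non-vanishing numerical coefficient; for $\alpha=3$, a symmetric choice involving $(\mathbf{v}_3^1)^{(3)}$ does the same. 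The corrections from $\mathbf{v}_\alpha^l$ with $l\geq 2$ contribute at order $\delta(x')^{(l-m-1)+(1/2)}$ in the same component, so they are strictly subleading on $\Omega_R$.

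The main obstacle is the lower bound: one must verify that no cancellation occurs between the contributions of $\mathbf{v}_\alpha^1$ and the higher $\mathbf{v}_\alpha^l$ after taking $\nabla^m$, and that the explicit coefficient in the chosen component is bounded below uniformly in $\varepsilon$. This requires tracking the recursive formulas \eqref{3ddtgs1}--\eqref{3ddtgs6111} carefully enough to rule out an algebraic coincidence in the leading Lamé-dependent coefficient, much as in Corollary \ref{cor1} for the two-dimensional case. Once this is done, combining the upper and lower bounds with the asymptotic identity established in the previous paragraph completes the proof.
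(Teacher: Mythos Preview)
Your proposal is correct and follows the same route as the paper: the paper itself does not write out a detailed proof of Theorem \ref{mainth23d} but simply states that it follows ``by using Lemma \ref{Sexsjjzk3d}'' together with the preceding explicit constructions and the $O(1)$ approximations $\nabla^{m}{\bf u}_{1\alpha}=\nabla^{m}\sum_{l=1}^{m}{\bf v}_{\alpha}^{l}+O(1)$, which is precisely the combination you describe (decomposition \eqref{nablau_dec3d}, symmetry cancellation \eqref{cxiangdeng}, the bound on $\nabla^m{\bf u}_b$ from \eqref{ubgj3d}--\eqref{ubgj3d1}, and Lemma \ref{Sexsjjzk3d}). Your treatment of the two-sided bound on $|\nabla^m\sum_l{\bf v}_\alpha^l|$ is in fact more explicit than what the paper provides; note only that the pointwise lower bound is delicate (some individual components can vanish at special points such as the origin), and the paper effectively substantiates it only at the specific points $(r\sqrt{\varepsilon},0',0)$ via the argument of Theorem \ref{thm-5.12}.
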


Similar to Corollary \ref{cor1}, we have
\begin{corollary}
	Under the assumptions of Theorem \ref{mainth23d}, let ${\bf u}\in H^{1}(D;\mathbb R^{3})\cap C^{m}(\bar{\Omega};\mathbb R^{3})$ be a solution to \eqref{maineqn}. For sufficiently small $0<\varepsilon<1/2$, for $x\in\Omega_{R}$ we have
	
	(i) for the gradient matrix and $i=1,2$, 
	\begin{align*}
		\partial_{x_3}{\bf u}^{(i)}(x)&=\frac{b_{1i}^{*}[\boldsymbol{\varphi}]}{\pi\mu}\cdot\frac{1}{|\log\varepsilon|(\varepsilon+|x'|^2)}\Big(1+O(\frac{1}{|\log\varepsilon|})\Big)+O(1),\\
		\partial_{x_3}{\bf u}^{(3)}(x)&=\frac{b_{13}^{*}[\boldsymbol{\varphi}]}{\pi(\lambda+2\mu)}\cdot\frac{1}{|\log\varepsilon|(\varepsilon+|x'|^2)}\Big(1+O(\frac{1}{|\log\varepsilon|})\Big)+O(1),
	\end{align*}
	while the other terms are bounded by a constant $C$;
 
	(ii) for the second-order derivative matrix and $i=1,2$, 
	\begin{align*}
		\partial_{x_ix_3}{\bf u}^{(i)}&=-\frac{2b_{1i}^{*}[\boldsymbol{\varphi}]}{\pi\mu}\cdot\frac{x_i}{|\log\varepsilon|(\varepsilon+|x'|^2)^2}\Big(1+O(\frac{1}{|\log\varepsilon|})\Big)+\frac{O(1)}{|\log\varepsilon|(\varepsilon+|x'|^2)}+O(1),\\
		\partial_{x_ix_3}{\bf u}^{(3)}&=-\frac{2b_{13}^{*}[\boldsymbol{\varphi}]}{\pi(\lambda+2\mu)}\cdot\frac{x_i}{|\log\varepsilon|(\varepsilon+|x'|^2)^2}\Big(1+O(\frac{1}{|\log\varepsilon|})\Big)+\frac{O(1)}{|\log\varepsilon|(\varepsilon+|x'|^2)}+O(1),\\
		\partial_{x_3x_3}{\bf u}^{(i)}&=\frac{2(\lambda+\mu)}{\pi\mu(\lambda+2\mu)}\cdot\frac{b_{13}^{*}[\boldsymbol{\varphi}]x_i}{|\log\varepsilon|(\varepsilon+|x'|^2)^2}\Big(1+O(\frac{1}{|\log\varepsilon|})\Big)+\frac{O(1)}{|\log\varepsilon|(\varepsilon+|x'|^2)}+O(1),\\
		\partial_{x_3x_3}{\bf u}^{(3)}&=\frac{2(\lambda+\mu)}{\pi\mu(\lambda+2\mu)}\cdot\frac{b_{11}^{*}[\boldsymbol{\varphi}]x_1+b_{12}^{*}[\boldsymbol{\varphi}]x_2}{|\log\varepsilon|(\varepsilon+|x'|^2)^2}\Big(1+O(\frac{1}{|\log\varepsilon|})\Big)+\frac{O(1)}{|\log\varepsilon|(\varepsilon+|x'|^2)}+O(1),
	\end{align*}
while the other terms are bounded by $\frac{C}{|\log\varepsilon|(\varepsilon+|x'|^2)}+C.$
\end{corollary}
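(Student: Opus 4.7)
The plan is to apply Theorem~\ref{mainth23d} at $m=1$ and $m=2$, and, for each entry of the derivative matrix, to extract the $(\varepsilon+|x'|^2)^{-(m+1)/2}$-order contribution of $\nabla^m\sum_{l=1}^m{\bf v}_\alpha^l(x)$. Since Theorem~\ref{mainth23d} already captures the leading behavior of $\nabla^m{\bf u}$ up to an $O(1)$ error, the task reduces to an entry-by-entry identification of the dominant singular term and a routine bound on the remaining entries.

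For part~(i), recall ${\bf v}_\alpha^{11}(x)=(\tfrac{x_3}{\varepsilon+|x'|^2}+\tfrac12){\boldsymbol\psi}_\alpha$, while ${\bf v}_\alpha^{12}$ carries the factor $x_3^2-\tfrac14(\varepsilon+|x'|^2)^2$ and lies in the ${\boldsymbol\psi}_3$-direction (for $\alpha=1,2$) or in the ${\boldsymbol\psi}_1,{\boldsymbol\psi}_2$-plane (for $\alpha=3$). Since ${\bf v}_\alpha^{11}$ is linear in $x_3$ and $\partial_{x_3}{\bf v}_\alpha^{12}=O((\varepsilon+|x'|^2)^{-1/2})$ in $\Omega_R$, the only entry of $\nabla{\bf v}_\alpha^1$ of order $(\varepsilon+|x'|^2)^{-1}$ is the diagonal $\partial_{x_3}({\bf v}_\alpha^{11})^{(\alpha)}=\tfrac{1}{\varepsilon+|x'|^2}$. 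Substituting this into Theorem~\ref{mainth23d} yields the three displayed leading terms for $\partial_{x_3}{\bf u}^{(i)}$, $i=1,2,3$, with the claimed prefactors $\frac{b_{1i}^*[\boldsymbol{\varphi}]}{\pi\mu}$ (for $i=1,2$) and $\frac{b_{13}^*[\boldsymbol{\varphi}]}{\pi(\lambda+2\mu)}$. All other entries of $\nabla{\bf v}_\alpha^1$ are $O((\varepsilon+|x'|^2)^{-1/2})$ in $\Omega_R$ (using the pointwise bound $|x_3|\le(\varepsilon+|x'|^2)/2$), so after division by $|\log\varepsilon|$ they are absorbed into the stated residual.

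For part~(ii), take $m=2$ and analyze $\nabla^2({\bf v}_\alpha^1+{\bf v}_\alpha^2)$. The $(\varepsilon+|x'|^2)^{-2}$-singular contributions fall into two families. First, the mixed derivatives of the linear-in-$x_3$ part, $\partial_{x_ix_3}({\bf v}_\alpha^{11})^{(\alpha)}=-\tfrac{2x_i}{(\varepsilon+|x'|^2)^2}$ for $i=1,2$, produce the leading terms in $\partial_{x_ix_3}{\bf u}^{(i)}$ and $\partial_{x_ix_3}{\bf u}^{(3)}$. Second, the pure $x_3$-derivatives of ${\bf v}_\alpha^{12}$ give $\partial_{x_3x_3}{\bf v}_\alpha^{12}=2\mathcal{P}_{12,1}(x'){\boldsymbol\psi}_3$ for $\alpha=1,2$, and $\partial_{x_3x_3}{\bf v}_3^{12}=2(\mathcal{P}_{12,1}(x'){\boldsymbol\psi}_1+\mathcal{Q}_{12,1}(x'){\boldsymbol\psi}_2)$ for $\alpha=3$. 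Plugging in $\mathcal{P}_{12,1}(x')=\frac{\lambda+\mu}{\lambda+2\mu}\frac{x_\alpha}{(\varepsilon+|x'|^2)^2}$ (resp.\ $\frac{\lambda+\mu}{\mu}\frac{x_i}{(\varepsilon+|x'|^2)^2}$ for $\alpha=3$) yields the prefactor $\frac{2(\lambda+\mu)}{\pi\mu(\lambda+2\mu)}$ displayed in the $\partial_{x_3x_3}{\bf u}^{(i)}$ formulas. All other entries of $\nabla^2{\bf v}_\alpha^l$ are at most $(\varepsilon+|x'|^2)^{-3/2}$-singular, so after the $|\log\varepsilon|^{-1}$ factor they contribute only $C|\log\varepsilon|^{-1}(\varepsilon+|x'|^2)^{-1}+C$ to the residual.

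The main obstacle is the bookkeeping across the three $\alpha$-channels: for each matrix entry of $\nabla{\bf u}$ or $\nabla^2{\bf u}$, one must check that exactly one value of $\alpha$ contributes at the maximal singular order, by exploiting the fact that ${\bf v}_\alpha^{11}$ is aligned with ${\boldsymbol\psi}_\alpha$ while ${\bf v}_\alpha^{12}$ lies in the complementary directions. The recursive formulas \eqref{3ddtgs1}--\eqref{3ddtgs3} for $\alpha=1,2$ and \eqref{3ddtgs4}--\eqref{3ddtgs6111} for $\alpha=3$ are then invoked only to confirm that the second-layer correction ${\bf v}_\alpha^2$ is $O((\varepsilon+|x'|^2)^{3/2})$ pointwise and hence its second derivatives are subleading to those of ${\bf v}_\alpha^1$. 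Once this identification is carried out, each displayed expression in the corollary follows by reading off the appropriate coefficient in the asymptotic formula of Theorem~\ref{mainth23d}.
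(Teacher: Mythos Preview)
Your proposal is correct and follows exactly the approach the paper intends: the corollary is stated as a direct consequence of Theorem~\ref{mainth23d}, and your plan of reading off the leading singular entry of $\nabla^m\sum_{l=1}^m{\bf v}_\alpha^l$ for $m=1,2$ is precisely what is required. The identification of the dominant contributions---$\partial_{x_3}({\bf v}_\alpha^{11})^{(\alpha)}=(\varepsilon+|x'|^2)^{-1}$ for part~(i), and $\partial_{x_ix_3}({\bf v}_\alpha^{11})^{(\alpha)}$ together with $\partial_{x_3x_3}{\bf v}_\alpha^{12}$ for part~(ii)---is accurate, and your verification of the prefactor $\tfrac{2(\lambda+\mu)}{\pi\mu(\lambda+2\mu)}$ via the explicit formulas for $\mathcal{P}_{12,1}$, $\mathcal{Q}_{12,1}$ is correct.

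One point to tighten: in part~(i) you assert that the non-leading entries, being $O((\varepsilon+|x'|^2)^{-1/2})$, become $O(1)$ after division by $|\log\varepsilon|$. Unlike the two-dimensional case where $\sqrt{\varepsilon}\,(\varepsilon+x_1^2)^{-1/2}\le 1$ is immediate, in three dimensions $|\log\varepsilon|^{-1}(\varepsilon+|x'|^2)^{-1/2}$ is not bounded uniformly as $\varepsilon\to 0$ when $|x'|\sim\sqrt{\varepsilon}$. You should state the residual for those entries as $\tfrac{C}{|\log\varepsilon|(\varepsilon+|x'|^2)^{1/2}}+C$ (in line with Theorem~\ref{mainth3d}), or else note that the corollary's $O(1)$ claim for the remaining first-order entries is to be read in that sense.
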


\subsection{Lower bounds for the higher derivatives}
To demonstrate the optimality of the upper bounds obtained in Theorems \ref{mainth} and \ref{mainth3d}, we present a lower bound result at the end of this section.

\begin{theorem}\label{thm-5.12}
	Let $D_1,D_2\subset D$ be defined as above and let ${\bf u}\in 
H^{1}(D;\mathbb R^{d})\cap C^{m}(\bar{\Omega};\mathbb R^{d})$ be a solution 
to \eqref{maineqn}. If there exists a $\boldsymbol{\varphi}$ such that $\boldsymbol{\varphi}(-x)=-\boldsymbol{\varphi}(x)$ and $b_{11}^{*}[\boldsymbol{\varphi}]\neq 0$. Then for $m\ge 1$ and 
sufficiently small $0<\varepsilon<1/2$, there exists a small constant $r>0$ which may depend on $m$, such that
\begin{align*}
|\partial_{x_1}^{m-1}\partial_{x_{d}}{\bf u}^{(1)}(r\sqrt{\varepsilon},0)|\ge C|b_{11}^{*}[\boldsymbol{\varphi}]|\varepsilon^{-m/2}\quad \text{for}~ d=2,
\end{align*}
and 
\begin{align*}
	|\partial_{x_1}^{m-1}\partial_{x_{d}}{\bf u}^{(1)}(r\sqrt{\varepsilon},0,\dots,0 )|\ge C|b_{11}^{*}[\boldsymbol{\varphi}]||\log\varepsilon|^{-1}\varepsilon^{-(m+1)/2}\quad \text{for}~ d=3.
\end{align*}
\end{theorem}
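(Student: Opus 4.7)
The plan is to read off both lower bounds directly from the asymptotic expansions in Theorems \ref{mainth-5.3} and \ref{mainth23d}, by evaluating at the point $x_{\ast} := (r\sqrt{\varepsilon}, 0, \ldots, 0)$ for a small constant $r>0$ depending on $m$, and isolating the contribution from $b_{11}^{\ast}[\boldsymbol{\varphi}]$ and the leading auxiliary function ${\bf v}_{1}^{1}$. Since $({\bf v}_{1}^{1})^{(1)}(x) = \frac{x_{d}}{\varepsilon+|x'|^{2}} + \frac{1}{2}$, a direct scaling $x_{1} = \sqrt{\varepsilon}\,t$ gives
\begin{equation*}
\partial_{x_{1}}^{m-1}\partial_{x_{d}}({\bf v}_{1}^{1})^{(1)}(x_{\ast}) \;=\; \partial_{x_{1}}^{m-1}\frac{1}{\varepsilon+x_{1}^{2}}\bigg|_{x_{1}=r\sqrt{\varepsilon}} \;=\; \varepsilon^{-(m+1)/2}\,\partial_{t}^{m-1}\frac{1}{1+t^{2}}\bigg|_{t=r}.
\end{equation*}
Since $(1+t^{2})^{-1}$ is real-analytic and not a polynomial, I can fix $r>0$ so that the right-hand side is nonzero. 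Multiplying by the prefactor $\sqrt{\varepsilon}/(\pi\mu)$ from Theorem \ref{mainth-5.3} in dimension two, respectively $1/(\pi\mu|\log\varepsilon|)$ from Theorem \ref{mainth23d} in dimension three, produces exactly the claimed leading order $|b_{11}^{\ast}[\boldsymbol{\varphi}]|\varepsilon^{-m/2}$, respectively $|b_{11}^{\ast}[\boldsymbol{\varphi}]|\,|\log\varepsilon|^{-1}\varepsilon^{-(m+1)/2}$.

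It remains to check that no other summand in the asymptotic expansion can cancel this leading contribution at $x_{\ast}$. For every $\alpha \neq 1$, the first component $({\bf v}_{\alpha}^{l})^{(1)}$ has a polynomial structure in $x_{d}$ for which $\partial_{x_{d}}|_{x_{d}=0}$ vanishes on the axis $\{x_{2}=\cdots=x_{d-1}=0\}$: in dimension two this follows at once from \eqref{def2dvi1i} since the relevant surviving powers of $x_{2}$ are all positive, while in dimension three the off-diagonal polynomial $\mathcal{Q}_{l1,i}(x')$ is odd in $x_{2}$ (visible in the closed form of $\mathcal{Q}_{21,1}$ and propagated through the recursions \eqref{3ddtgs1}--\eqref{3ddtgs6111} by induction on $l$), and the $\alpha=3$ branch is treated analogously. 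For the ${\bf v}_{1}^{l}$-summands with $l\ge 2$, the polynomial structure isolates the single surviving term $-\frac{1}{4}\mathcal{P}_{l1,l-1}(x_{1})\delta(x_{1})^{2}$ after $\partial_{x_{d}}|_{x_{d}=0}$. The uniform bound $|({\bf v}_{1}^{l})^{(1)}| \leq C\delta(x_{1})^{l-1}$ proved in Section \ref{sec2}, applied to the explicit polynomial form, yields $|\mathcal{P}_{l1,l-1}(x_{1})| \leq C\delta(x_{1})^{l-4}$. Hence $|\partial_{x_{1}}^{m-1}[\mathcal{P}_{l1,l-1}(x_{1})\delta(x_{1})^{2}]|_{x_{1}=r\sqrt{\varepsilon}} \leq C\varepsilon^{l-(m+3)/2}$, which for every $l \geq 2$ is smaller than the leading $\varepsilon^{-(m+1)/2}$ by a factor of at least $\varepsilon$. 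The $O(1)$ remainder in the asymptotic formula and its relative error $O(\varepsilon^{1/4})$ (resp.\ $O(|\log\varepsilon|^{-1})$) are likewise dominated by the leading order.

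The step I expect to be the main obstacle is the parity and vanishing analysis for the three-dimensional auxiliary polynomials $\mathcal{P}_{l\cdot,\cdot}$ and $\mathcal{Q}_{l\cdot,\cdot}$: since they are defined only inductively through the recursions \eqref{3ddtgs1}--\eqref{3ddtgs6111} rather than in closed form, one must verify by induction on $l$ that the required oddness (in $x_{2}$ for the $\alpha=2$ branch, and in $x_{3}$ in the analogous role for the $\alpha=3$ branch) is preserved at each step, which amounts to checking that every term of the recursion respects the parity of the previous level. Once this bookkeeping is completed, the lower bound is immediate from the one-variable nonvanishing of $\partial_{t}^{m-1}(1+t^{2})^{-1}$ at $t=r$ together with the already established upper bound on the $(\alpha,l)\neq(1,1)$ contributions.
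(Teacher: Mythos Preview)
Your overall strategy matches the paper's: plug the asymptotic expansion of Theorem~\ref{mainth-5.3} (resp.\ Theorem~\ref{mainth23d}) into $\partial_{x_1}^{m-1}\partial_{x_d}{\bf u}^{(1)}$ at $x_\ast=(r\sqrt{\varepsilon},0,\ldots,0)$, show that the $b_{11}^\ast[\boldsymbol{\varphi}]\cdot\partial_{x_1}^{m-1}\partial_{x_d}({\bf v}_1^1)^{(1)}$ contribution is of the claimed size, and check that every other summand is of strictly lower order. The leading-term computation is identical in both.

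Where you diverge from the paper is in controlling the subleading terms. You invoke a parity/vanishing argument on the axis to kill the $\alpha\neq 1$ contributions exactly, and then extract a coefficient bound $|\mathcal P_{l1,l-1}|\le C\delta^{l-4}$ from $|({\bf v}_1^l)^{(1)}|\le C\delta^{l-1}$ to handle the $\alpha=1$, $l\ge 2$ pieces. The paper does something simpler: it just quotes the already-proved derivative bounds \eqref{xydgj2d2} (restated as \eqref{lowergj}) to get $|\partial_{x_1}^{m-1}\partial_{x_2}({\bf v}_i^l)^{(j)}|\le C\delta^{(2l-m-2)/2}$ for $j\neq i$ and $C\delta^{(2l-m-3)/2}$ for $j=i$, which already makes every subleading term $o(\varepsilon^{-(m+1)/2})$ at $x_\ast$, with no parity bookkeeping at all. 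In particular, the inductive parity check in three dimensions that you flag as the main obstacle is simply not needed: the uniform estimates of Section~\ref{sec4} do the job uniformly in $\alpha$ and $l$. Your route is correct in two dimensions and very likely correct in three, but it is more work for no gain.

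One technical gap in your version: from the pointwise bound $|\mathcal P_{l1,l-1}(x_1)\delta(x_1)^2|\le C\delta(x_1)^{l-2}$ alone you cannot conclude the stated bound on $\partial_{x_1}^{m-1}[\mathcal P_{l1,l-1}\delta^2]$ at $x_1=r\sqrt{\varepsilon}$; you need derivative bounds on $\mathcal P_{l1,l-1}$ as well. The cleanest fix is exactly what the paper does---bypass the coefficient extraction and cite \eqref{xydgj2d2} directly for $\partial_{x_1}^{m-1}\partial_{x_d}({\bf v}_1^l)^{(1)}$.
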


\begin{proof}[Proof of Theorem \ref{thm-5.12}]
	We only prove the case when $d=2$ for instance, since the case when $d=3$ is similar. By Theorem \ref{mainth-5.3}, we have
	\begin{align*}
			&\partial_{x_1}^{m-1}\partial_{x_{2}}{\bf u}^{(1)}\\
   =&\frac{\sqrt{\varepsilon}}{\pi}\Big(\frac{b_{11}^{*}[\boldsymbol{\varphi}]}{\mu}	\partial_{x_1}^{m-1}\partial_{x_{2}} \sum_{l=1}^{m}({\bf v}_{1}^{l})^{(1)}+\frac{b_{12}^{*}[\boldsymbol{\varphi}]}{\lambda+2\mu}	\partial_{x_1}^{m-1}\partial_{x_{2}} \sum_{l=1}^{m}({\bf v}_{2}^{l})^{(1)}\Big)(1+O(\varepsilon^{1/4}))+O(1).
	\end{align*}
	For small $\varepsilon>0$, $\frac12\leq 1+O(\varepsilon^{1/4})\le \frac32$, thus, 
	\begin{align}\label{gjzz1}
		|\partial_{x_1}^{m-1}\partial_{x_{2}}{\bf u}^{(1)}|
		\ge&\, C\sqrt{\varepsilon}|b_{11}^{*}[\boldsymbol{\varphi}]|\partial_{x_1}^{m-1}\partial_{x_{2}}({\bf v}_{1}^{1})^{(1)}\nonumber\\
		&-C\sqrt{\varepsilon}|\partial_{x_1}^{m-1}\partial_{x_{2}}({\bf v}_{2}^{1})^{(1)}|-C\sqrt{\varepsilon} \Big|\sum_{l=2}^m \sum_{i=1}^{2}\partial_{x_1}^{m-1}\partial_{x_{2}}({\bf v}_{i}^{l})^{(1)}\Big|-C.
	\end{align}
	By \eqref{def2dv11i} and \eqref{def2dvi1i}, it is easy to verify that, for $l\ge 1$, $i,j=1,2$, $i\neq j$,
	\begin{equation*}
			|({\bf v}_{i}^{l})^{(i)}(x) | \le C\delta(x_1)^{l-1},\quad\quad |({\bf v}_{i}^{l})^{(j)}(x) | \le C\delta(x_1)^{l-1/2},
	\end{equation*}
where $\delta(x_1)=\varepsilon+x_1^2$. In addition, we have
\begin{equation}\label{lowergj}
|\partial_{x_1}^{m-1}\partial_{x_{2}}({\bf v}_{i}^{l})^{(i)}|\leq C\delta(x_1)^{(2l-m-3)/2},\quad\quad |\partial_{x_1}^{m-1}\partial_{x_{2}}({\bf v}_{i}^{l})^{(j)} | \leq C\delta(x_1)^{(2l-m-2)/2}.
\end{equation}
It follows from \eqref{def2dv11i} that for some small $r>0$, which may depend on $m$,
\begin{equation}\label{lower1}
|\partial_{x_1}^{m-1}\partial_{x_{2}}({\bf v}_{1}^{1})^{(1)}(r\sqrt{\varepsilon},0 )|=|\partial_{x_1}^{m-1}(\delta(x_1)^{-1})(r\sqrt{\varepsilon},0 )|\ge C\varepsilon^{-(m+1)/2},
\end{equation}
where $C>0$. By \eqref{lowergj}, we obtain
\begin{equation}\label{lower2}
|\partial_{x_1}^{m-1}\partial_{x_{2}}({\bf v}_{2}^{1})^{(1)}|\leq C\varepsilon^{-m/2},\quad\quad\Big|\sum_{l=2}^m \sum_{i=1}^{2}\partial_{x_1}^{m-1}\partial_{x_{2}}({\bf v}_{i}^{l})^{(1)}\Big|\leq C\varepsilon^{(1-m)/2}.
\end{equation}
Combining \eqref{lower1} and \eqref{lower2} with \eqref{gjzz1} yields, for small $\varepsilon>0$,
\begin{align*}
|\partial_{x_1}^{m-1}\partial_{x_{d}}{\bf u}^{(1)}(r\sqrt{\varepsilon},0)|&\ge C|b_{11}^{*}[\boldsymbol{\varphi}]|\varepsilon^{-m/2}-C\varepsilon^{(2-m)/2}-C\varepsilon^{(1-m)/2}-C\\
&\ge C|b_{11}^{*}[\boldsymbol{\varphi}]|\varepsilon^{-m/2}.
\end{align*}
We thus complete the proof.
\end{proof}

%\vspace{.5cm}

\section{Lam\'e Systems with Two Close Holes}\label{sec6}

In this section, we prove Theorem \ref{mainthinsl} and establish an upper bound estimate for the higher-order derivatives of the solution to the Lam\'e system \eqref{maineqn222} with two closely spaced holes. We follow the method used in \cite{LY} , where Li and Yang  obtained the following gradient estimate in all dimensions,
\begin{align}\label{insludsgj}
|\nabla {\bf u}(x)|\leq C\|{\bf u}\|_{L^\infty (\Omega_{R})}(\varepsilon+|x'|^2)^{-1/2}, \quad x\in \Omega_{R}.
\end{align}

\begin{proof}[Proof of Theorem \ref{mainthinsl}]
For a fixed point $x_0\neq 0\in \Omega_{R}$, by means of a change of variables
\begin{align}\label{zbbh01}
	\begin{cases}
		y'=\frac{1}{\sqrt{\delta(x_0')}}(x'-x_0'),\\
		y_d=\frac{1}{\sqrt{\delta(x_0')}}x_d,
	\end{cases}
\end{align}
where $\delta(x_0')=\varepsilon+|x_0'|^2$, we transform the narrow region $\Omega_{\sqrt{\delta(x_0')}/4}(x_0')$ to
\begin{align}\label{domain1}
	\Big\{y\in\mathbb{R}^d: |y'| \le \frac{1}{4}, \frac{-1}{\sqrt{\delta}}\Big(\frac{\varepsilon}{2}+h_2\big(x_0'+\sqrt{\delta}y'\big)\Big)< y_d <\frac{1}{\sqrt{\delta}}\Big(\frac{\varepsilon}{2}+h_1\big(x_0'+\sqrt{\delta}y'\big)\Big) \Big\},
\end{align}
where $\delta:=\delta(x_0')$ and
\begin{equation*}
	\Omega_{r}(x_0'):=\left\{(x',x_{d})\in \mathbb{R}^{d}:~-\frac{\varepsilon}{2}-h_{2}(x')<x_{d}<\frac{\varepsilon}{2}+h_{1}(x'),~|x'-x_0'|<r\right\},~0\le r\le R.
\end{equation*}
We use another change of variables by
\begin{align}\label{zbbh11}
\begin{cases}
	z'=4y',\\
	z_d=2\sqrt{\delta(x_0')}\Big(\frac{\sqrt{\delta(x_0')}y_d+h_2\big(x_0'+\sqrt{\delta(x_0')}y'\big)+{\varepsilon}/{2}}{\delta(x_0'+\sqrt{\delta(x_0')}y')}-\frac12\Big),
\end{cases}
\end{align}
which transforms the region defined by \eqref{domain1} to a cylinder $Q_{1,\sqrt{\delta(x_0')}}$, where 
$$Q_{s,t}:=\{z=(z',z_d):~|z'|\le s,~|z_d|\le t\},$$
for $s,t>0$. Let ${\bf v}(z)={\bf u}(x)$. Then ${\bf v}(z)$ satisfies
\begin{align}\label{changevariable}
	\begin{cases}
		\partial_{\theta}\left(B_{ij}^{\theta\gamma}(z)\partial_{\gamma}{\bf v}^j(z)\right)=0&\mbox{in}~Q_{1,\sqrt{\delta(x_0')}},\\
	    B_{ij}^{d\gamma}(z)\partial_{\gamma}{\bf v}^j(z)=0&\mbox{on}~\{y_d=\pm\sqrt{\delta(x_0')}\}.
	\end{cases}
\end{align}
The coefficient matrix $( B_{ij}^{\theta\gamma}(z))$ is given by
\begin{align}\label{insufz1}
B_{ij}^{\theta\gamma}=A_{ij}^{\alpha\beta}\frac{\partial z^{\theta}}{\partial y^{\alpha}}\frac{\partial z^{\gamma}}{\partial y^{\beta}}\det\partial_zy,
\end{align}
where 
$A^{\alpha\beta}_{ij}=\lambda\delta_{\alpha i}\delta_{\beta j}+\mu(\delta_{\alpha\beta}\delta_{ij}+\delta_{\alpha j}\delta_{i\beta}),$ the Jacobian matrix of the change of variables \eqref{zbbh11} is denoted by $\partial_y z$, and its inverse matrix  is denoted by  $\partial_z y$.

By using \eqref{zbbh11}, we derive, for $1\le i\le d-1,$ and $j\neq i$,
\begin{align*}
(\partial_y z)^{ii}&=4,\quad\quad\quad (\partial_y z)^{dd}=2\delta(x_0')\delta\big(x_0'+\sqrt{\delta(x_0')}z'/4\big)^{-1},\quad\quad(\partial_x y)^{ij}=0,\\
(\partial_y z)^{di}&=\frac{2\delta(x_0')\partial_{x_{i}}h_2\big(x_0'+\sqrt{\delta(x_0')}z'/4\big)-\sqrt{\delta(x_0')}\big(z_d+\sqrt{\delta(x_0')}\big)\partial_{x_{i}}\delta\big(x_0'+\sqrt{\delta(x_0')}z'/4\big)}{\delta\big(x_0'+\sqrt{\delta(x_0')}z'/4\big)}.
\end{align*}
Moreover, since $|z_d|\leq \sqrt{\delta(x_0')}$, $z'\le 1$, and $|x_0'|\le \sqrt{\delta(x_0')}$, by \eqref{h1h1} and \eqref{h1h14}, we have
\begin{align*}
	|(\partial_y z)^{di}|\leq \frac{C\delta(x_0')\big|x_0'+\sqrt{\delta(x_0')}z'/4\big|}{\delta(x_0')(1+(1/4)^2|z'|^2-(1/2)|z'|)}\leq C\sqrt{\delta(x_0')}.
\end{align*}
We further require $R$ to be sufficiently small so that $(\partial_y z)^{di}$ are small  enough. Since $|z'|<1$ and $|x_0'|<\sqrt{\delta(x_0')}$, we have
\begin{align*}
(\partial_y z)^{dd}\ge\frac{2\delta(x_0')}{\varepsilon+|x_0'+\sqrt{\delta(x_0')}z'/4|^2} \ge \frac{2\delta(x_0')}{\varepsilon+C\delta(x_0')}\ge \frac{1}{C},\quad z\in Q_{1,\sqrt{\delta(x_0')}}.
\end{align*}
In addition, we have
\begin{align*}
(\partial_y z)^{dd}&\le \frac{C\delta(x_0')}{\varepsilon+|x_0'|^2+(1/4)^2\delta(x_0')|z'|^2+(1/2)|x_0'\cdot z'|\sqrt{\delta(x_0')}}\\& \leq \frac{C\delta(x_0')}{\delta(x_0')+(1/4)^2\delta(x_0')|z'|^2-(1/2)|x_0'||z'|\sqrt{\delta(x_0')}}\\
&\le\frac{C\delta(x_0')}{\delta(x_0')(1+(1/4)^2|z'|^2-(1/2)|z'|)}\le C,\quad z\in Q_{1,\sqrt{\delta(x_0')}}.
\end{align*}
This leads to 
\begin{align*}
\frac{1}{C}\le\det\partial_y z,~\det\partial_z y\le C.
\end{align*}

By using \eqref{insufz1}, we obtain
\begin{align*}
	\frac 1 C\le|B_{ij}^{\theta\gamma} (z)|\leq C,\quad z\in Q_{1,\sqrt{\delta(x_0')}}.
\end{align*}
For $m\ge 0$, we have
\begin{align*}
|\partial_{z'}^m(\partial_y z)^{dd}|\le C\Big|\frac{\delta(x_0')^{(m+2)/2}|x_0'+\sqrt{\delta(x_0')}z'|^m)}{\delta(x_0'+\sqrt{\delta(x_0')}z'/4)^{m+1}}\Big|\leq C, \quad z\in Q_{1,\sqrt{\delta(x_0')}}.
\end{align*}
Similar computations show that
\begin{align*}
|\partial_{z'}^m(\partial_y z)^{dd}|,~|\partial_{z}^m(\partial_y z)^{di}|\leq C.
\end{align*}
Thus, for $0<\mu<1$,
\begin{align*}
\|\partial_{z}^mB_{ij}^{\theta\gamma}\|_{C^\mu(\bar{Q}_{1,\sqrt{\delta(x_0')}})}\leq C.
\end{align*}

Now, for integer $l$, we define
$$S_l:= \left\{z=(z',z_d) :~   |z'| < 1,~ (2l-1) \sqrt{\delta(x_0')} < z_d < (2l+1) \sqrt{\delta(x_0')} \right\}$$
and 
$$S:= \left\{z=(z',z_d) :~   |z'| < 1,~|z_d| < 1 \right\}.$$
Obviously, $Q_{1,\sqrt{\delta(x_0')}}=S_0$ and $Q_{1,1}=S$. We will extend the definitions of ${\bf v}(z)$ and $B_{ij}^{\theta\gamma}(z)$ from $S_0$ to the entire $S$, and still denote them as ${\bf v}(z)$ and $B_{ij}^{\theta\gamma}(z)$ if it does not cause confusion. For $l\neq 0$, we denote
$${\bf v}(z',z_d) := {\bf v}\Big(z', (-1)^l\Big(z_d- 2l \sqrt{\delta(x_0')}\Big)\Big), \quad \forall~ z \in S_l,$$
and define the corresponding coefficients, for $k = 1,2, \dots, d-1$,
$$ B_{ij}^{dk}(z)= B_{ij}^{kd}(z) := (-1)^l B_{ij}^{dk}\Big(z', (-1)^l\Big(z_d - 2l \sqrt{\delta(x_0')}\Big)\Big),  \quad \forall ~z \in S_l,$$
and for other indices,
$$ B_{ij}^{\theta\gamma}(z) := B_{ij}^{\theta\gamma}\Big(z', (-1)^l\Big(z_d - 2l \sqrt{\delta(x_0')}\Big)\Big), \quad \forall~ z \in S_l.$$
By \eqref{changevariable}, ${\bf v}(z)$ satisfies
\begin{align}\label{changevariable1}
		\partial_{\theta}\left(B_{ij}^{\theta\gamma}(z)\partial_{\gamma}{\bf v}^j(z)\right)=0\quad\mbox{in}~S.
\end{align}
By using \cite[Theorem 1]{D2012}, we have
\begin{align}\label{dyb1}
	\|\nabla{\bf v}\|_{C^\mu(Q_{1/2,\sqrt{\delta(x_0)}})}\leq C\|{\bf v}\|_{L^2(S)}\leq  C\|{\bf v}(z)\|_{L^\infty(Q_{1, \sqrt{\delta(x_0')}})}.
\end{align}

For $k=1,2,\dots, d-1$, differentiating both sides of equation \eqref{changevariable1} with respect to $z_k$, we have
\begin{align}\label{changevariable2}
		\partial_{\theta}\left(B_{ij}^{\theta\gamma}(z)\partial_{\gamma}(\partial_k{\bf v}^j(z))\right)=	-\partial_{\theta}\left(\partial_kB_{ij}^{\theta\gamma}(z)\partial_{\gamma}{\bf v}^j(z)\right)\quad\mbox{in}~S.
\end{align}
Thus, by using \cite[Theorem 1]{D2012} and \eqref{dyb1} (see also \cite[Lemma 2.1]{LY}), we have
\begin{align*}
\|\nabla (\partial_k{\bf v})\|_{C^\mu(Q_{1/4, \sqrt{\delta(x_0')}})}\leq&\, C\|\partial_k {\bf v}\|_{L^2(Q_{S})}+C\|\partial_kB_{ij}^{\theta\gamma}(z)\partial_{\gamma}{\bf v}^j\|_{C^\mu(Q_{1/2, \sqrt{\delta(z')}})}\\
\leq&\, C\|\partial_k {\bf v}\|_{L^\infty(Q_{1, \sqrt{\delta(x_0')}})}+C\|{\bf v}\|_{L^\infty(Q_{1, \sqrt{\delta(x_0')}})}.
\end{align*}

Reversing the change of variables \eqref{zbbh11} and \eqref{zbbh01}, we deduce
\begin{align*}
\delta(x_0')\|\nabla (\partial_k {\bf u})\|_{L^\infty(\Omega_{\sqrt{\delta(x_0')}/4}(x_0'))}\leq C\sqrt{\delta(x_0')}\|\nabla {\bf u}\|_{L^\infty(\Omega_{\sqrt{\delta(x_0')}/4}(x_0'))}+C\|{\bf u}\|_{L^\infty(\Omega_{\sqrt{\delta(x_0')}/4}(x_0'))}.
\end{align*}
Together with \eqref{insludsgj}, we obtain
\begin{align*}
\|\nabla (\partial_k {\bf u})\|_{L^\infty(\Omega_{\sqrt{\delta(x_0')}/4}(x_0'))}\leq C\|{\bf u}\|_{L^\infty (\Omega_{R})}\delta(x_0')^{-1}.
\end{align*}
By using the first line of \eqref{maineqn222}, since $\lambda,\mu> 0$, we have, for $j\neq d$,
\begin{align*}
&\|\partial_{x_dx_d}{\bf u}^j\|_{L^\infty(\Omega_{\sqrt{\delta(x_0')}/4}(x_0'))}\\
\leq&\,\|\Delta_{x'}{
\bf u}^j\|_{L^\infty(\Omega_{\sqrt{\delta(x_0')}/4}(x_0'))} +\frac{\lambda+\mu}{\mu}\|\partial_{x_j}(\nabla\cdot {\bf u})\|_{L^\infty(\Omega_{\sqrt{\delta(x_0')}/4}(x_0'))}\\
\leq&\, C\|{\bf u}\|_{L^\infty (\Omega_{R})}\delta(x_0')^{-1}
\end{align*}
and 
\begin{align*}
	&\|\partial_{x_dx_d}{\bf u}^d(x)\|_{L^\infty(\Omega_{\sqrt{\delta(x_0')}/4}(x_0'))}\\
	\leq&\, C\|\Delta_{x'}{
		\bf u}^d\|_{L^\infty(\Omega_{\sqrt{\delta(x_0')}/4}(x_0'))} +C\Big\|\sum_{i=1}^{d-1}\partial_{x_ix_d}{\bf u}^i\Big\|_{L^\infty(\Omega_{\sqrt{\delta(x_0')}/4}(x_0'))}\\
	\leq&\, C\|{\bf u}\|_{L^\infty (\Omega_{R})}\delta(x_0')^{-1}.
\end{align*}
Thus, we have the following second-order derivative estimate
$$\|\nabla^2{\bf u}(x)\|_{L^\infty(\Omega_{\sqrt{\delta(x_0')}}(x_0'))}\leq C\delta(x_0')^{-1}.$$
Continuing to differentiate both sides of equation \eqref{changevariable2} with respect to $z_k$ for $k=1,2,\dots, d-1$ and repeating the previous process, we obtain \eqref{inslut1}. We thus complete the proof.
\end{proof}

%\vspace{.5cm}

\section{Appendix: A Framework for the Energy Method}

In this section, we present some framework results for the energy method and demonstrate that the auxiliary functions constructed in Sections \ref{sec2} and \ref{sec4} can capture all singular terms in $\nabla^{m}{\bf u}_{1\alpha}$ in $\Omega_{R}$.  Let 
$${\bf w}_{1\alpha}^{m}:={\bf u}_{1\alpha}-{\bf v}_{\alpha}^{m}, \quad\alpha=1,\cdots,d(d+1)/2,\quad\mathrm{in}~\Omega_{2R},$$ 
where ${\bf u}_{1\alpha}={\bf v}_{\alpha}^{m}$ on $\Gamma^+_{2R}\cup\Gamma^-_{2R}$. Then ${\bf w}_{1\alpha}^{m}$ verifies
\begin{equation}\label{wia}
\begin{cases}
\mathcal{L}_{\lambda,\mu}{\bf w}_{1\alpha}^{m}={\bf f}_{\alpha}^{m}:=-\mathcal{L}_{\lambda,\mu}{\bf v}_{\alpha}^{m}&\mathrm{in}~\Omega_{2R},\\
{\bf w}_{1\alpha}^{m}=0&\mathrm{on}~\Gamma^+_{2R}\cup\Gamma^-_{2R}.
\end{cases}
\end{equation}

We have 
\begin{prop}\label{wlinfty}
Let ${\bf w}_{1\alpha}^{m}$ be the solution to \eqref{wia}. Then for $z=(z',z_d)\in\Omega_R$ and $m\geq 1$, the following estimate holds
\begin{align}\label{keyest}
&\|\nabla^{m}{\bf w}_{1\alpha}^{m}\|_{L^\infty(\Omega_{\delta(z')/2}(z'))}\nonumber\\
\leq&\, \frac{1}{\delta(x')^m}\Big(\frac{1}{\delta(z')^{(d-2)/2}}\|\nabla{\bf w}_{1\alpha}^{m}\|_{L^2(\Omega_{\delta(z')/2}(z'))}+\sum_{j=0}^{m-1}\delta(z')^{2+j}\|\nabla^j{\bf f}_{\alpha}^{m}\|_{L^\infty(\Omega_{\delta(z')}(z'))}\Big).
\end{align}
\end{prop}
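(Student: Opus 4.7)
The plan is to prove the estimate by a scaling argument that reduces it to a standard higher-order regularity statement for the Lam\'e system on a rescaled cell of unit size. Fix $z=(z',z_d)\in\Omega_R$ and write $\delta:=\delta(z')$. I would introduce the stretched coordinates $y=(y',y_d)$ via $x'=z'+\delta\, y'$ and $x_d=\delta\, y_d$, so that the neck neighborhood $\Omega_{\delta}(z')$ is mapped onto a domain $\widetilde\Omega_1$ of diameter $\lesssim 1$, and $\Omega_{\delta/2}(z')$ onto a subdomain $\widetilde\Omega_{1/2}\Subset\widetilde\Omega_1$. Using \eqref{h1h1}--\eqref{h1h14} together with the fact that $\delta(x')\sim\delta(z')$ for $|x'-z'|\le\delta$, one checks that the rescaled top and bottom graphs $\widetilde\Gamma^\pm$ are $C^{m+1,\gamma}$ with norms bounded uniformly in $z'$ and $\varepsilon$, and that the vertical thickness of $\widetilde\Omega_1$ is bounded below by a positive constant. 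Setting $\tilde{\bf w}(y):={\bf w}_{1\alpha}^m(z'+\delta y',\delta y_d)$ and $\tilde{\bf f}(y):={\bf f}_\alpha^m(z'+\delta y',\delta y_d)$, translation invariance of $\mathcal{L}_{\lambda,\mu}$ yields
\begin{equation*}
\mathcal{L}_{\lambda,\mu}\tilde{\bf w}=\delta^2\,\tilde{\bf f}\quad\text{in }\widetilde\Omega_1,\qquad \tilde{\bf w}=0\text{ on }\widetilde\Gamma^\pm.
\end{equation*}

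On this uniformly regular rescaled cell I would apply a classical higher-regularity estimate for the Lam\'e system. Combining Caccioppoli-type iteration with $L^p$ elliptic regularity up to the $C^{m+1,\gamma}$ boundary portion $\widetilde\Gamma^\pm$ (where $\tilde{\bf w}=0$) and the Sobolev embedding $W^{m+1,p}\hookrightarrow C^{m}$ for $p>d$, one obtains
\begin{equation*}
\|\nabla^m\tilde{\bf w}\|_{L^\infty(\widetilde\Omega_{1/2})}\le C\Bigl(\|\nabla \tilde{\bf w}\|_{L^2(\widetilde\Omega_{1/2})}+\sum_{j=0}^{m-1}\|\nabla^j(\delta^2\tilde{\bf f})\|_{L^\infty(\widetilde\Omega_1)}\Bigr).
\end{equation*}
The lateral boundary of $\widetilde\Omega_{1/2}$ sits in the interior of $\widetilde\Omega_1$, which furnishes the buffer needed for interior estimates in the $y'$-direction, while the boundary part of the estimate on $\widetilde\Gamma^\pm$ exploits the zero Dirichlet condition; a Poincar\'e inequality (valid since $\tilde{\bf w}$ vanishes on top and bottom) lets one replace the usual $\|\tilde{\bf w}\|_{L^2}$ term by $\|\nabla\tilde{\bf w}\|_{L^2}$.

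Finally I would rescale back. The Jacobian relation $dx=\delta^d\,dy$ gives $\|\nabla\tilde{\bf w}\|_{L^2(\widetilde\Omega_{1/2})}=\delta^{1-d/2}\|\nabla{\bf w}_{1\alpha}^m\|_{L^2(\Omega_{\delta/2}(z'))}$, each spatial derivative contributes a factor of $\delta$ so that $\|\nabla^j\tilde{\bf f}\|_{L^\infty}=\delta^j\|\nabla^j{\bf f}_\alpha^m\|_{L^\infty}$, and $\|\nabla^m\tilde{\bf w}\|_{L^\infty}=\delta^m\|\nabla^m{\bf w}_{1\alpha}^m\|_{L^\infty}$. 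Multiplying the rescaled inequality through by $\delta^{-m}$ reproduces \eqref{keyest} with the exact exponents announced.

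The principal obstacle is ensuring that the constant $C$ in the rescaled regularity estimate is independent of $z'$ and $\varepsilon$. This amounts to verifying the quantitative uniformity of the rescaled geometry: that the $\widetilde\Gamma^\pm$ belong to a bounded family in $C^{m+1,\gamma}$ (so that Schauder/$L^p$ constants up to the boundary are uniform) and that $\widetilde\Omega_{1/2}$ satisfies a uniform interior cone condition (so that the Sobolev embedding constants are uniform). Both follow from the comparability $\delta(x')\sim\delta(z')$ on $\Omega_\delta(z')$, which in turn is a consequence of the quantitative convexity $\nabla_{x'}^2(h_1+h_2)\ge\kappa I_{d-1}$ together with $h_i\in C^{m+1,\gamma}$; this is the only place where the geometric hypotheses on the inclusions enter, and the rest of the argument is standard scaling plus interior regularity.
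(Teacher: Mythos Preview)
Your proposal is correct and follows essentially the same route as the paper: rescale the neck neighborhood $\Omega_{\delta(z')}(z')$ to a unit-size cell via $x=(z'+\delta y',\delta y_d)$, apply standard higher-order $W^{k,p}$ estimates (the paper cites Agmon--Douglis--Nirenberg) for the Lam\'e system with zero Dirichlet data on the top and bottom graphs, use Sobolev embedding with $p>d$, and rescale back. Your version is in fact slightly more careful than the paper's own write-up---you correctly record the rescaled equation as $\mathcal{L}_{\lambda,\mu}\tilde{\bf w}=\delta^2\tilde{\bf f}$ (which is what yields the $\delta^{2+j}$ factors in \eqref{keyest}) and you explicitly address the uniformity of the constants in $z'$ and $\varepsilon$, which the paper leaves implicit.
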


For the sake of simplicity of notation, in the following we will omit superscripts and subscripts when they do not cause confusion. 
\begin{proof}
We take a change of variables, as in \cite{bll1,bll2},
\begin{equation*}%\label{changeofvariant}
\left\{
\begin{aligned}
&x'-z'=\delta(z') y',\\
&x_d=\delta(z') y_{d},
\end{aligned}
\right.
\end{equation*}
to transform $\Omega_{\delta(z')}(z')$ into a nearly unit size domain $Q_{1}$, where 
\begin{align*}
Q_{r}=\left\{y\in\mathbb{R}^{d}:~-\frac{\varepsilon}{2\delta}-\frac{1}{\delta}h_{2}(\delta\,y'+z')<y_{d}
<\frac{\varepsilon}{2\delta}+\frac{1}{\delta}h_{1}(\delta\,y'+z'),~|y'|<r\right\},~\delta=\delta(z')
\end{align*}
for $0<r<1$. Denote its upper and lower boundaries by $\Gamma_{1}^{+}$ and $\Gamma_{1}^{-}$, respectively. For $y\in Q_1$,let us define
\begin{equation*}
\mathcal{W}(y', y_{d}):={\bf w}(\delta\,y'+z',\delta\,y_{d}),~ \mathcal{V}(y', y_{d}):={\bf v}(\delta\,y'+z',\delta\,y_{d}),~\text{and}~\mathcal{F}(y', y_{d}):={\bf f}(\delta\,y'+z',\delta\,y_{d}).
\end{equation*} 
Then it follows from \eqref{wia} that
\begin{equation*}%\label{bhfc}
\begin{cases}
\mathcal{L}_{\lambda,\mu}\mathcal{W}=\mathcal{F}&\mathrm{in}~Q_1,\\
\mathcal{W}=0&\mathrm{on}~\Gamma_{1}^{+}\cup\Gamma_{1}^{-}.
\end{cases}
\end{equation*}
By using the $W^{k,p}$ estimates for elliptic systems with partially vanishing boundary (see \cite[Theorem 10.6]{adn0}), we derive
\begin{equation*}
\|\nabla^k\mathcal{W}\|_{W^{2,p}(Q_{1/2})}\leq C\Big(\|\nabla\mathcal{W}\|_{L^{2}(Q_1)}+\sum_{j=0}^{k}\|\nabla^j\mathcal{F}\|_{L^p(Q_1)}\Big).
\end{equation*}
Taking $k=m-1$, together with the Sobolev embedding theorem for $p>d$, leads to
\begin{equation*}
\|\nabla^{m}\mathcal{W}\|_{L^{\infty}(Q_{1/2})}\leq C\|\nabla^{m-1}\mathcal{W}\|_{W^{2,p}(Q_{1/2})} \leq C\Big(\|\nabla\mathcal{W}\|_{L^{2}(Q_1)}+\sum_{j=0}^{m-1}\|\nabla^j\mathcal{F}\|_{L^\infty(Q_1)}\Big).
\end{equation*}
Rescaling back to the domain $\Omega_{\delta(z')}(z')$, we have \eqref{keyest}. This finishes the proof.
\end{proof}

Based on Proposition \ref{wlinfty}, it is clear that in order to obtain higher derivative estimates, we need to establish the local energy estimates of ${\bf w}_{1\alpha}^{m}$ and the pointwise upper bounds of ${\bf f}_{\alpha}^m$.  Actually, the local energy estimates of ${\bf w}_{1\alpha}^{m}$ depend on the estimates of ${\bf f}_{\alpha}^m$ in $\Omega_{R}$, as demonstrated in Lemmas \ref{ztnl} and \ref{jbnl} below. We first give the boundedness of the global energy in the domain $\Omega_{R}$, provided that ${\bf f}_{\alpha}^{m}$ satisfies certain assumptions.

\begin{lemma}\label{ztnl}
Let ${\bf w}_{1\alpha}^{m}$ be the solution to \eqref{wia}. If 
\begin{equation}\label{cond_f}
|{\bf f}_{\alpha}^{m}(x)|\leq C\delta(x')^{-3/2}\quad\mbox{in}~ \Omega_{2R},\quad\mbox{and} ~\|{\bf w}_{1\alpha}^m\|_{L^\infty(\Omega_{2R}\setminus\Omega_{R})}\leq C,
\end{equation} then
\begin{align*}
\int_{\Omega_{3R/2}}|\nabla {\bf w}_{1\alpha}^{m}|^2\leq C.
\end{align*}
\end{lemma}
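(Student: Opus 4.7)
The plan is to run a Caccioppoli-type energy estimate tailored to the thin neck, in which the singular forcing $|{\bf f}|\leq C\delta(x')^{-3/2}$ is tamed by a vertical Poincar\'e inequality and the Lam\'e coercivity is upgraded to a bound on the full gradient via a Korn identity. Pick a cutoff $\eta=\eta(x')\in C_c^\infty(B'_{5R/3})$ with $\eta\equiv 1$ on $B'_{3R/2}$ and $|\nabla\eta|\leq C$, and test the weak form of \eqref{wia} against $\phi={\bf w}_{1\alpha}^{m}\,\eta^2$. All boundary contributions vanish: ${\bf w}_{1\alpha}^m=0$ on $\Gamma^\pm_{2R}$ and $\eta=0$ on $\{|x'|=2R\}$, giving
\begin{equation*}
\int_{\Omega_{2R}}\mathbb{C}^0{\bf e}({\bf w}):{\bf e}({\bf w}\,\eta^2)\,dx=-\int_{\Omega_{2R}}{\bf f}\cdot{\bf w}\,\eta^2\,dx.
\end{equation*}
Expanding ${\bf e}({\bf w}\eta^2)=\eta^2{\bf e}({\bf w})+{\bf e}({\bf w}\otimes\nabla\eta^2)_{\mathrm{sym}}$ and using $\mathbb{C}^0{\bf e}:{\bf e}\geq c_0|{\bf e}|^2$ (which follows from $\mu>0$ and $d\lambda+2\mu>0$), the task reduces to absorbing the cross term $C\int|{\bf e}({\bf w})|\,|{\bf w}|\,|\nabla\eta|\,\eta$ and the forcing $|\int{\bf f}\cdot{\bf w}\,\eta^2|$.

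The cross term is handled by Young: it is bounded by $\epsilon\int|{\bf e}({\bf w})|^2\eta^2+C_\epsilon\int|{\bf w}|^2|\nabla\eta|^2$, and the last integral is bounded by a constant because $\mathrm{supp}\,|\nabla\eta|\subset\{R<|x'|<5R/3\}\subset\Omega_{2R}\setminus\Omega_R$, where the hypothesis $\|{\bf w}\|_{L^\infty(\Omega_{2R}\setminus\Omega_R)}\leq C$ applies. The key step is the forcing. Since ${\bf w}(x',\cdot)$ vanishes at both $x_d=-\varepsilon/2-h_2(x')$ and $x_d=\varepsilon/2+h_1(x')$, a one-dimensional Poincar\'e inequality on each vertical slice of width $\delta(x')$ yields $\int|{\bf w}|^2/\delta^2\,dx_d\leq C\int|\partial_{x_d}{\bf w}|^2\,dx_d$. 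Splitting $|{\bf f}||{\bf w}|=(\delta|{\bf f}|)(|{\bf w}|/\delta)$, Cauchy--Schwarz and Young produce
\begin{equation*}
\Bigl|\int{\bf f}\cdot{\bf w}\,\eta^2\Bigr|\leq\epsilon\int|\partial_{x_d}{\bf w}|^2\eta^2+C_\epsilon\int_{\Omega_{2R}}\delta^2|{\bf f}|^2\,dx,
\end{equation*}
and the last integral is finite: $\delta^2|{\bf f}|^2\leq C\delta^{-1}$, while $\int_{\Omega_{2R}}\delta(x')^{-1}\,dx\leq\int_{B'_{2R}}1\,dx'\leq C$, the vertical thickness $\delta(x')$ precisely canceling the $\delta^{-1}$ weight.

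To convert $\int|{\bf e}({\bf w})|^2\eta^2$ into $\int|\nabla{\bf w}|^2\eta^2$, apply the Korn identity $\int|\nabla\phi|^2=2\int|{\bf e}(\phi)|^2-\int(\nabla\cdot\phi)^2$ to $\phi={\bf w}\eta\in H^1_0(\Omega_{2R})$ (${\bf w}$ vanishes on $\Gamma^\pm_{2R}$, $\eta$ vanishes on the vertical side), which gives $\int|\nabla{\bf w}|^2\eta^2\leq C\int|{\bf e}({\bf w})|^2\eta^2+C\int|{\bf w}|^2|\nabla\eta|^2$. Combining the three displayed estimates and choosing $\epsilon$ small enough so that both $\epsilon\int|{\bf e}({\bf w})|^2\eta^2$ and $\epsilon\int|\partial_{x_d}{\bf w}|^2\eta^2$ can be absorbed into the coercivity term on the left finishes the proof, since $\eta\equiv 1$ on $\Omega_{3R/2}$. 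The main obstacle is precisely this triple absorption (Young, then Poincar\'e, then Korn): the constants only close because the singular weight $\delta^{-3}$ coming from $|{\bf f}|^2$ is tamed by the Poincar\'e factor $\delta^2$, leaving the borderline weight $\delta^{-1}$, whose integral over the neck is finite thanks to the additional vertical thickness factor $\delta$.
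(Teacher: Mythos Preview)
Your proof is correct and follows essentially the same route as the paper: a Caccioppoli test with a horizontal cutoff $\eta=\eta(x')$, the first Korn inequality for ${\bf w}\eta\in H^1_0(\Omega_{2R})$, the vertical Poincar\'e (Hardy) inequality $\int|{\bf w}|^2/\delta^2\leq C\int|\partial_{x_d}{\bf w}|^2$, and the borderline computation $\int_{\Omega_{2R}}\delta^2|{\bf f}|^2\leq C\int_{\Omega_{2R}}\delta^{-1}\leq C$. The only cosmetic difference is that the paper keeps the forcing as $C\bigl(\int|\nabla{\bf w}|^2\bigr)^{1/2}$ and closes by a quadratic inequality, whereas you use Young with parameter $\epsilon$ and absorb; the two are equivalent.
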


\begin{proof}
Let $\eta$ be a smooth function satisfying $\eta(x')=1$ if $|x'|<3R/2$, $\eta(x')=0$ if $|x'|>2R$, $0\leqslant\eta(x')\leqslant1$ if $3R/2\le |x'|\le 2R$, and $|\nabla_{x'}\eta(x')|\leq C$. Multiplying the equation in \eqref{wia} by ${\bf w}\eta^2$ and integrating by parts, we have
\begin{align}\label{ge1x}
\int_{\Omega_{2R}}(\mathbb{C}^0 e({\bf w}), e({\bf w}\eta^2)) =\int_{\Omega_{2R}}\eta^2{\bf w}\cdot {\bf f}.
\end{align}
By using the first Korn inequality and some standard arguments, we have
\begin{equation}\label{gee2x}
\int_{\Omega_{2R}}(\mathbb{C}^0 e({\bf w}), e({\bf w}\eta^2))\ge \frac{1}{C}\int_{\Omega_{2R}}|\nabla ({\bf w}\eta)|^2-C\int_{\Omega_{2R}}|{\bf w}|^2|\nabla \eta|^2.
\end{equation}
From \eqref{cond_f}, it follows that
\begin{align*}
	\|\delta(x') {\bf f}\|_{L^2(\Omega_{3R/2})}\leq C\Big(\int_{\Omega_{3R/2}}\frac{1}{\delta(x')}\Big)^{1/2}\leq C.
\end{align*}
For the right-hand side of \eqref{ge1x}, by using H{\"o}lder’s inequality and Hardy’s inequality, we derive
\begin{align}\label{ge2x}
	\Big|\int_{\Omega_{2R}}\eta^2{\bf w}\cdot {\bf f}\Big|\leq&\, \Big|\int_{\Omega_{3R/2}}{\bf w}\cdot {\bf f}\Big|+\Big|\int_{\Omega_{2R}\setminus\Omega_{3R/2}}\eta^2{\bf w}\cdot {\bf f}\Big|\nonumber\\
	\leq&\,\|\delta(x') {\bf f}\|_{L^2(\Omega_{3R/2})}\Big\|\frac{\bf w}{\delta(x')}\Big\|_{L^2(\Omega_{3R/2})}+C\nonumber\\
	\leq& C\Big(\int_{\Omega_{3R/2}}|\nabla{\bf w}|^2\Big)^{1/2}+C.
\end{align}
It follows from \eqref{cond_f}--\eqref{ge2x} that
\begin{align*}
\int_{\Omega_{3R/2}}|\nabla {\bf w}|^2\leq&\, C\Big(\int_{\Omega_{3R/2}}|\nabla{\bf w}|^2\Big)^{1/2}+C\int_{\Omega_{2R}\setminus\Omega_{3R/2}}|{\bf w}|^2|\nabla \eta|^2+C\\
\leq&\, C\Big(\int_{\Omega_{3R/2}}|\nabla{\bf w}|^2\Big)^{1/2}+C.
\end{align*}
Hence, $\int_{\Omega_{3R/2}}|\nabla {\bf w}|^2\leq C$. This completes the proof.
\end{proof}

As previously mentioned, if we can construct more auxiliary functions to make $|{\bf f}_{1\alpha}^{m}(x)|$ smaller, then we can get better local energy estimates of ${\bf w}_{1\alpha}^{m}$ by the iteration technique.
\begin{lemma}\label{jbnl}
Let ${\bf w}_{1\alpha}^{m}\in H^{1}(\Omega_{3R/2})$ be the solution to \eqref{wia}. If 
\begin{align}\label{jstj}
|{\bf f}_{1\alpha}^{m}(x)|\leq C\delta(x')^a,  \quad\forall~a\ge -\frac{3}{2},\quad x \in \Omega_{2R},
\end{align}
then, for sufficiently small $0\leq \varepsilon\leq 1/2$, 
\begin{align*}
\int_{\Omega_{\delta(z')}(z')}|\nabla{\bf w}_{1\alpha}^{m}(x)|^2\leq 
C\delta(z')^{2a+d+2},\quad z \in \Omega_{R}.
\end{align*}
\end{lemma}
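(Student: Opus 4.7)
The plan is to derive the desired local energy decay by an iteration scheme on dyadic scales, starting from the global bound of Lemma \ref{ztnl} and refining it down to scale $\delta(z')$. The two essential ingredients are a Caccioppoli-type inequality on nested subdomains $\Omega_s(z') \subset \Omega_t(z')$ and the Poincar\'e inequality in the narrow $x_d$-direction, exploiting the fact that ${\bf w}_{1\alpha}^m$ vanishes on both $\Gamma^{+}$ and $\Gamma^{-}$.

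First, I would test the equation $\mathcal{L}_{\lambda,\mu}{\bf w}={\bf f}$ in \eqref{wia} with ${\bf w}\eta^{2}$, where $\eta$ is a smooth cutoff with $\eta=1$ on $\Omega_{s}(z')$, $\eta=0$ outside $\Omega_{t}(z')$, and $|\nabla_{x'}\eta|\le C/(t-s)$. The first Korn inequality on the quadratic form (as in the proof of Lemma \ref{ztnl}) combined with the Poincar\'e inequality $\int|{\bf w}|^{2}\le C\,(\max_{\Omega_t(z')}\delta(x'))^{2}\int|\partial_{x_{d}}{\bf w}|^{2}$, which is $\le C\delta(z')^{2}\int|\partial_{x_d}{\bf w}|^2$ on scales $t\le \sqrt{\delta(z')}$ where $\delta(x')\asymp\delta(z')$, yields
\begin{equation*}
F(s)\;\le\;\frac{C\,\delta(z')^{2}}{(t-s)^{2}}\bigl(F(t)-F(s)\bigr)+\epsilon F(t)+\frac{C}{\epsilon}\,\delta(z')^{2a+3}\,t^{d-1},
\end{equation*}
where $F(r):=\int_{\Omega_{r}(z')}|\nabla {\bf w}|^{2}$. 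The last term comes from estimating $|\int {\bf w}\eta^{2}\cdot{\bf f}|$ by H\"older, Poincar\'e, and Young's inequalities, using $|{\bf f}|\le C\delta(x')^{a}\le C\delta(z')^{a}$ and $|\Omega_{t}(z')|\sim t^{d-1}\delta(z')$.

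Second, I would iterate this on the dyadic scales $s_{k}=2^{k}\delta(z')$ for $k=0,1,\ldots,N$ with $s_{N}\sim\sqrt{\delta(z')}$. At step $k$ with $t=s_{k+1}=2s_{k}$, rearranging the Caccioppoli estimate gives
\begin{equation*}
F(s_{k})\;\le\;\theta_{k}\,F(s_{k+1})+C\,2^{k(d-1)}\,\delta(z')^{2a+d+2},
\end{equation*}
where $\theta_{k}\le C\cdot 4^{-k}$. Iterating, the accumulated product $\prod_{i<k}\theta_{i}\le C^{k}\,4^{-k(k-1)/2}$ decays super-exponentially in $k$; consequently, the carry-over term $\bigl(\prod_{i<N}\theta_{i}\bigr)F(s_{N})$ is negligible (using $F(s_{N})\le F(3R/2)\le C$ from Lemma \ref{ztnl}), and the weighted sum $\sum_{k}\bigl(\prod_{i<k}\theta_{i}\bigr)\cdot 2^{k(d-1)}\delta(z')^{2a+d+2}$ is bounded by a geometric-type series in which the super-exponential decay of the product dominates the polynomial factor $2^{k(d-1)}$. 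This yields $F(\delta(z'))\le C\delta(z')^{2a+d+2}$, as claimed.

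The main obstacle will be the careful bookkeeping of the constants, in particular the verification that the product of the $\theta_{k}$'s absorbs the $2^{k(d-1)}$ growth of the error terms uniformly in the dimension, together with the transition at the scale $t\sim\sqrt{\delta(z')}$ beyond which $\delta(x')$ grows like $t^{2}$ and the Poincar\'e constant switches regime; there the fine-scale iteration is terminated and the remaining bound on $F(\sqrt{\delta(z')})$ follows directly from Lemma \ref{ztnl}. The hypothesis $a\ge-3/2$ enters precisely to guarantee the validity of Lemma \ref{ztnl} as the initial input of the iteration.
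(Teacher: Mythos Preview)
Your approach is essentially the same as the paper's: test the equation against ${\bf w}\eta^{2}$ to obtain a Caccioppoli inequality, combine with the Poincar\'e inequality in the thin $x_{d}$-direction (valid for radii $\le C\sqrt{\delta(z')}$), and iterate from the scale $\sqrt{\delta(z')}$, where Lemma~\ref{ztnl} supplies the initial bound, down to $\delta(z')$.

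The one substantive difference is the choice of iteration steps. The paper uses an \emph{arithmetic} sequence $t_{i}=\delta(z')+2c_{0}i\,\delta(z')$ with fixed increment $2c_{0}\delta(z')$, so the Caccioppoli factor $(c_{0}\delta/(t_{i+1}-t_{i}))^{2}$ equals $1/4$ at every step and the summation of errors is a simple geometric--polynomial series. Your dyadic choice $s_{k}=2^{k}\delta(z')$ also works, but your claim that $\theta_{k}\le C\cdot 4^{-k}$ is not quite consistent with the displayed inequality containing the $\epsilon F(t)$ term: rearranging gives $\theta_{k}=\frac{C/4^{k}+\epsilon}{1+C/4^{k}}$, which tends to $\epsilon$ (not $C/4^{k}$) for large $k$, so the product $\prod_{i<k}\theta_{i}$ decays only exponentially like $\epsilon^{k}$, not super-exponentially. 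This is harmless provided $\epsilon$ is fixed small enough depending on $d$ (so that $\epsilon\cdot 2^{d-1}<1$), or, more cleanly, if you apply Poincar\'e to ${\bf w}\eta$ (which also vanishes on $\Gamma^{\pm}$) and absorb the $\epsilon$-term into the left-hand side $\int|\nabla({\bf w}\eta)|^{2}$, eliminating the $\epsilon F(t)$ term altogether; then $\theta_{k}\le C/4^{k}$ holds exactly and your super-exponential argument goes through. The paper's arithmetic scheme sidesteps this bookkeeping and is somewhat more direct.
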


\begin{proof}
For $0<t<s\leq R/2$, let $\eta$ be a smooth function satisfying $\eta(x')=1$ if $|x'-z'|<t$, $\eta(x')=0$ if $|x'-z'|>s$, $0\leqslant\eta(x')\leqslant1$ if $t\leqslant|x'-z'|<s$,
and $|\nabla_{x'}\eta(x')|\leq \frac{2}{s-t}$. Multiplying the equation in \eqref{wia} by ${\bf w}\eta^2$ and integrating by parts leads to the following Caccioppoli’s type inequality
\begin{align}\label{jbnlfz2}
\int_{\Omega_t(z')}|\nabla {\bf w}|^2\leq \frac{C}{(s-t)^2} \int_{\Omega_s(z')}|{\bf w}|^2+(s-t)^2\int_{\Omega_s(z')}|{\bf f}|^2.
\end{align}
We refer the reader to \cite[p.319. Step 2]{bll1} for more details. Since ${\bf w}=0$ on $\Gamma^{\pm}_{2R}$, employing the Poincar\'e inequality yields (see \cite[(3.36),(3.39)]{bll2}),
\begin{align}\label{estwDwl2}
\int_{\Omega_{s}(z')}|{\bf w}|^2\leq
C\delta^2(z')\int_{\Omega_{s}(z')}|\nabla {\bf w}|^2, ~0< s\le C\sqrt{\delta(z')}.
\end{align}
By using \eqref{jstj}, 
\begin{align}\label{jbnl1}
\int_{\Omega_s(z')}|{\bf f}|^2\leq Cs^{d-1}\delta(z')^{2a+1},~0<s\le C\sqrt{\delta(z')}.
\end{align}
Substituting \eqref{estwDwl2} and \eqref{jbnl1} into \eqref{jbnlfz2} yields
\begin{equation}\label{jbnlfz3}
\int_{\Omega_t(z')}|\nabla {\bf w}|^2\leq \frac{C\delta(z')^2}{(s-t)^2} \int_{\Omega_s(z')}|\nabla{\bf w}|^2+C(s-t)^2s^{d-1}\delta(z')^{2a+1}.
\end{equation}

Set $E(t):=\int_{\Omega_{t}(z_1)}|\nabla {\bf w}|^{2}\mathrm{d}x$. By virtue of  \eqref{jbnlfz3}, we have
\begin{equation}\label{iteration3D}
E(t)\leq \left(\frac{c_{0}\delta(z')}{s-t}\right)^2 E(s)+C(s-t)^2s^{d-1}\delta(z')^{2a+1},
\end{equation}
where $c_{0}$ is a constant and we fix it now. Let $k_{0}=\left[\frac{1}{8c_{0}\sqrt{\delta(z')}}\right]$ and $t_{i}=\delta(z')+2c_{0}i\delta(z'), i=0,1,2,\dots,k_{0}$. Then by \eqref{iteration3D} with $s=t_{i+1}$ and $t=t_{i}$, we have the following iteration formula:
\begin{align*}%\label{18}
E(t_{i})\leq \frac{1}{4}E(t_{i+1})+C(i+1)^{d-1}\delta(z')^{2a+d+2}.
\end{align*}
After $k_{0}$ iterations, and using Lemma \ref{ztnl}, we obtain
\begin{align*}
E(t_0)\leq&\, \left(\frac{1}{4}\right)^{k_0}E(t_{k_0})
+C\delta(z')^{2a+d+2}\sum\limits_{l=0}^{k_0-1}\left(\frac{1}{4}\right)^{j}(j+1)^l\leq C\delta(z')^{2a+d+2},
\end{align*}
for sufficiently small $\varepsilon$. This completes the proof.
\end{proof}

With the help of Proposition \ref{wlinfty} and Lemma \ref{jbnl}, we reduce the higher derivative estimates of ${\bf w}_{1\alpha}^{m}$ to continuously improving the estimates of $|{\bf f}_{\alpha}^{m}(x)|$ and their derivatives.
\begin{prop}\label{propnew}
Let ${\bf w}_{1\alpha}^{m}\in H^{1}(\Omega_{3R/2})\cap L^{\infty}(\Omega_{2R}\setminus\Omega_{R})$ be the solution to \eqref{wia}. If
\begin{equation*}%\label{f22}
|{\bf f}_{\alpha}^{m}(x)|\leq C\delta(x')^a,\quad\mbox{and}~|\nabla^k{\bf f}_{\alpha}^{m}(x)|\leq C\delta(x')^{a-k}, \quad\,1\le k\leq l-1,~a\ge -\frac{3}{2}\quad\mbox{for}~x\in\Omega_{2R},
\end{equation*}
then there holds
\begin{align*}%\label{zydgs}
\|\nabla^{l}{\bf w}_{1\alpha}^{m}\|_{L^\infty(\Omega_{\delta(z')/2}(z'))}\leq C\delta(z')^{a+2-l},\quad 1\leq l\leq m,\quad z \in \Omega_{R}.
\end{align*}
\end{prop}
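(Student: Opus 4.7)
The plan is to combine the rescaled elliptic regularity bound from Proposition~\ref{wlinfty} with the local energy estimate of Lemma~\ref{jbnl}, and then plug in the pointwise hypotheses on $\mathbf{f}_\alpha^m$ and its derivatives. First, I would verify that the statement of Proposition~\ref{wlinfty} extends from the $m$-th derivative to the $l$-th derivative for any $1 \le l \le m$. Indeed, rescaling $\Omega_{\delta(z')}(z')$ to the unit-size domain $Q_1$ as in the proof of Proposition~\ref{wlinfty}, and applying the $W^{k,p}$ interior estimate of \cite{adn0} with $k = l-1$ followed by Sobolev embedding for $p>d$, yields
\begin{align*}
\|\nabla^{l}{\bf w}_{1\alpha}^{m}\|_{L^\infty(\Omega_{\delta(z')/2}(z'))}
\leq \frac{C}{\delta(z')^{l}}\Bigl(\delta(z')^{-(d-2)/2}\|\nabla{\bf w}_{1\alpha}^{m}\|_{L^2(\Omega_{\delta(z')}(z'))}
+\sum_{j=0}^{l-1}\delta(z')^{2+j}\|\nabla^j{\bf f}_{\alpha}^{m}\|_{L^\infty(\Omega_{\delta(z')}(z'))}\Bigr).
\end{align*}
This is essentially the same argument that produced Proposition~\ref{wlinfty}, just with $k=l-1$ in place of $k=m-1$; no additional ingredient is needed.

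Next, I would use Lemma~\ref{jbnl}, whose only hypothesis is the pointwise bound $|\mathbf{f}_\alpha^m(x)| \le C\delta(x')^a$ with $a \ge -3/2$, to control the energy term:
\begin{equation*}
\int_{\Omega_{\delta(z')}(z')}|\nabla{\bf w}_{1\alpha}^{m}|^2 \le C\delta(z')^{2a+d+2}.
\end{equation*}
Therefore,
\begin{equation*}
\delta(z')^{-(d-2)/2}\|\nabla{\bf w}_{1\alpha}^{m}\|_{L^2(\Omega_{\delta(z')}(z'))}
\le C\delta(z')^{-(d-2)/2}\cdot \delta(z')^{a+(d+2)/2} = C\delta(z')^{a+2}.
\end{equation*}

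For the forcing sum, the hypothesis $|\nabla^j \mathbf{f}_\alpha^m(x)|\le C\delta(x')^{a-j}$ for $0 \le j \le l-1$ (with the convention $|\mathbf{f}_\alpha^m|\le C\delta^a$ for $j=0$) gives
\begin{equation*}
\sum_{j=0}^{l-1}\delta(z')^{2+j}\|\nabla^j{\bf f}_{\alpha}^{m}\|_{L^\infty(\Omega_{\delta(z')}(z'))}
\le C\sum_{j=0}^{l-1}\delta(z')^{2+j}\,\delta(z')^{a-j} \le C(l)\,\delta(z')^{a+2},
\end{equation*}
where the telescoping simplification uses that the factors $\delta(z')^{j}$ and $\delta(z')^{-j}$ cancel term by term. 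Combining these two bounds inside the rescaled regularity estimate yields
\begin{equation*}
\|\nabla^{l}{\bf w}_{1\alpha}^{m}\|_{L^\infty(\Omega_{\delta(z')/2}(z'))}
\le \frac{C}{\delta(z')^{l}}\cdot \delta(z')^{a+2} = C\delta(z')^{a+2-l},
\end{equation*}
which is exactly the assertion.

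The argument is really a bookkeeping combination of earlier results, so there is no single hard step. The one place requiring a moment of care is the extension of Proposition~\ref{wlinfty} to general $l \le m$ (in particular, confirming that the $W^{l-1,p}$ estimate from \cite{adn0} applies in the rescaled geometry with the partially-vanishing Dirichlet boundary $\Gamma_1^\pm$ uniformly in $z'$); once this is granted, the remaining steps are direct substitutions of the pointwise hypotheses and the $L^2$ energy bound from Lemma~\ref{jbnl}.
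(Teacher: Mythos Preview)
Your proposal is correct and follows exactly the approach implicit in the paper: the paper does not give a separate proof of Proposition~\ref{propnew}, treating it as an immediate consequence of Proposition~\ref{wlinfty} (applied with $k=l-1$) together with the local energy bound of Lemma~\ref{jbnl}. Your identification of the only mildly nontrivial step---namely that the rescaled $W^{l-1,p}$ estimate of Proposition~\ref{wlinfty} holds for every $1\le l\le m$ by the same change of variables---is precisely the point, and the remaining substitutions match the paper's framework.
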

Therefore, in order to show that $\|\nabla^{m}{\bf w}_{1\alpha}^{m}\|_{L^\infty(\Omega_{\delta(z')/2}(z'))}\leq C$, we only need to choose suitable auxiliary functions ${\bf v}_{\alpha}^{m}$ so that the nonhomogeneous terms ${\bf f}_{\alpha}^{m}$ in \eqref{wia} and their derivatives are under control.

%\vspace{.5cm}

%\noindent{\bf Acknowledgements.} The work of H. Dong  was partially supported by the NSF under agreement DMS-2350129. The work of H. Li was partially Supported by Beijing Natural Science Foundation (No.1242006), the Fundamental Research Funds for the Central Universities (No.2233200015), and National Natural Science Foundation of China (No.12471191).

\section*{Declarations}

\noindent{\bf Data availability.} Data sharing not applicable to this article as no data sets were generated or analysed during the current study.

\noindent{\bf Financial interests.} The authors have no relevant financial or non-financial interests to disclose.

\noindent{\bf Conflict of interest.} The authors declare that they have no conflict of interest.

%\vspace{.5cm}

\end{document}